%%%%%%%%%%%%%%%%%
%                Д.И. Панюшев
%             Комбинаторные и геометрические свойства каскада
%             начат:  7 июля 2021 г.    версия 3 (23 сентября) 
%%%%%%%%%%%%%%%%%%%%
\documentclass[12pt]{amsart}

\usepackage{amssymb,amsbsy}
\usepackage{latexsym,color,longtable}
\usepackage{verbatim,euscript}
\usepackage{fullpage,array}
\usepackage{tikz}
\usetikzlibrary{arrows,shapes,trees}

\tolerance=4000
\numberwithin{equation}{section}
\setcounter{tocdepth}{1} % Show only sections
\setlength{\LTcapwidth}{\textwidth}

\usepackage[colorlinks=true,linkcolor=blue,urlcolor=violet,citecolor=magenta]{hyperref}

\definecolor{my_color}{rgb}{0,0.5,0.5}
\definecolor{mixt}{rgb}{0.5,0.3,0.2}
\definecolor{darkgreen}{rgb}{0.09, 0.35, 0.27}
\definecolor{forest}{rgb}{0.13, 0.55, 0.13}

\input {cyracc.def}
\font\tencyr=wncyr10 %scaled \magstephalf
\font\tencyi=wncyi10 %scaled \magstephalf
\font\tencysc=wncysc10 %scaled \magstephalf
\def\rus{\tencyr\cyracc}
\def\rusi{\tencyi\cyracc}
\def\rusc{\tencysc\cyracc}

%\newenvironment{proof*}
%{\noindent {\sl Proof.}\quad }{\hfill    $\square$}
%%%%%%%%%%  Weighted Dynkin diagrams %%%%%%%%%

%%%%%%%%%%%% Changing the presentation of citations %%%%%%%%%5
\catcode`\@=\active
\catcode`\@=11

\renewcommand{\@cite}[2]{[{{\bf #1}\if@tempswa , #2\fi}]}
\renewcommand{\@biblabel}[1]{[{\bf #1}]\hfill}
\catcode`\@=12

\newtheorem{thm}{Theorem}[section]
\newtheorem{lm}[thm]{Lemma}%[chapter]
%[chapter]
\newtheorem{prop}[thm]{Proposition}%[chapter]

\theoremstyle{remark}
\newtheorem{rmk}[thm]{Remark}

\theoremstyle{definition}
\newtheorem{ex}[thm]{Example}
\newtheorem{df}{Definition}

%%%%   Fraktur letters  %%%%%%%
\newcommand {\ah}{{\mathfrak a}}
\newcommand {\be}{{\mathfrak b}}
\newcommand {\ce}{{\mathfrak c}}

\newcommand {\De}{{\mathfrak D}}

\newcommand {\g}{{\mathfrak g}}
\newcommand {\h}{{\mathfrak h}}

\newcommand {\n}{{\mathfrak n}}

\newcommand {\p}{{\mathfrak p}}
\newcommand {\q}{{\mathfrak q}}

\newcommand {\te}{{\mathfrak t}}
\newcommand {\ut}{{\mathfrak u}}

%%%%%%%%%   Lie algebras %%%%%%%%%%%%%

\newcommand {\slno}{{\mathfrak {sl}}_{n+1}}
\newcommand {\sltn}{{\mathfrak {sl}}_{2n}}

\newcommand {\slv}{{\mathfrak {sl}}(\BV)}

\newcommand {\spv}{{\mathfrak {sp}}(\BV)}
\newcommand {\spn}{{\mathfrak {sp}}_{2n}}

\newcommand {\sov}{{\mathfrak {so}}(\BV)}

\newcommand {\soN}{{\mathfrak {so}}_{N}}

%%%%%%%%%  Goth Letters  %%%%%%%%%%

\newcommand {\eus}{\EuScript}

\newcommand {\gB}{{\eus B}}
\newcommand {\gC}{{\eus C}}
\newcommand {\gD}{{\eus D}}
\newcommand {\gH}{{\eus H}}
\newcommand {\gK}{{\eus K}}

%%%%%%%%   Greek letters   %%%%%%%
\newcommand {\esi}{\varepsilon}
\newcommand {\ap}{\alpha}

\newcommand {\lb}{\lambda}

%%%%%%%%% Letters with tilde  %%%%%%%%

\newcommand {\tap}{{\tilde{\alpha}}}

%%%%%% Letters with hat  %%%%%%%%

%%%%%%%%   Calligraphic letters  %%%%%%%
\newcommand {\ca}{{\mathcal A}}

\newcommand {\N}{{\mathcal N}}
\newcommand {\co}{{\mathcal O}}
\newcommand {\cP}{{\mathcal P}}
\newcommand {\cQ}{{\mathcal Q}}

%%%%%%%%  Azhurnye bukvy %%%%%%%%%%%%%
\newcommand {\BV}{{\mathbb{V}}}

\newcommand {\BC}{{\mathbb C}}
\newcommand {\BN}{{\mathbb N}}
\newcommand {\BQ}{{\mathbb Q}}
\newcommand {\BZ}{{\mathbb Z}}

\newcommand {\ad}{{\mathrm{ad\,}}}
\newcommand {\ads}{ {\mathrm{ad}}^* }

\newcommand {\hot}{{\mathsf{ht}}}
\newcommand {\htt}{\widetilde{\textsl{ht\,}}}
\newcommand {\ind}{{\mathrm{ind\,}}}
\newcommand {\Lie}{{\mathrm{Lie\,}}}
\newcommand {\Ker}{{\mathsf{Ker\,}}}
\newcommand {\Ima}{{\mathsf{Im\,}}}

\newcommand {\rk}{{\mathsf{rk\,}}}

\newcommand {\spec}{{\mathsf{spec}}}
\newcommand {\supp}{{\mathsf{supp}}}

\newcommand {\tr}{{\mathrm{tr}}}
\newcommand {\trdeg}{{\mathrm{trdeg\,}}}

\newcommand {\tri}{{\mathfrak{sl}}_2}

\newcommand {\GR}[2]{{\textrm{{\sf\bfseries #1}}}_{#2}}
\newcommand {\GRt}[2]{ {\widetilde{\textrm{\sf\bfseries #1} } }_{#2}   }

\newcommand {\ov}{\overline}
\newcommand {\un}{\underline}

\newcommand {\beq}{\begin{equation}}
\newcommand {\eeq}{\end{equation}}

\newcommand {\blb}{\boldsymbol{\lambda}}
\newcommand {\bi}{{\boldsymbol{i}}}

\newcommand {\spx}{\spec(x_\gK)}

\newcommand{\curle}{\preccurlyeq}
\renewcommand{\le}{\leqslant}
\renewcommand{\ge}{\geqslant}
\renewcommand{\lg}{\langle}
\newcommand{\rg}{\rangle}

%%%%%%%%%%%%%%%%%%%%%%%%%%%%%%%%%%%%%%%%%%%%%
\newfam\Bbbfam%
\font\Bbbfont=msbm10 scaled 1200%
%
%%\font\frak=eufm10 scaled 1400%
\font\Bbbsmallfont=msbm8%
\textfont\Bbbfam=\Bbbfont\scriptfont\Bbbfam=\Bbbsmallfont%%

\begin{document}
\setlength{\parskip}{2pt plus 4pt minus 0pt}
\hfill {\scriptsize May 20, 2022} 
\vskip1ex

\title[Properties of the cascade]{Combinatorial and geometric constructions associated with the 
Kostant cascade}
\author{Dmitri I. Panyushev}
\address{
Institute for Information Transmission Problems of the R.A.S., 
%\hfil\break\indent  Bolshoi Karetnyi per.~19, 
Moscow 127051, Russia}
\email{panyushev@iitp.ru}
\thanks{This research was funded by RFBR, project {\rus N0} 20-01-00515.}
\keywords{root system, cascade element, abelian ideal, Frobenius algebra, nilpotent orbit}
\subjclass[2010]{17B22, 17B20, 17B08, 14L30}
\dedicatory{To Alexander Grigorievich Elashvili on the occasion of his 80th birthday}
\begin{abstract}
Let $\g$ be a complex simple Lie algebra and $\be=\te\oplus\ut^+$ a fixed Borel subalgebra.
Let $\Delta^+$ be the set of positive roots associated with $\ut^+$ and $\gK\subset\Delta^+$ the 
Kostant cascade. We elaborate on some constructions related to $\gK$ and applications of $\gK$. This includes the cascade element $x_\gK$ in the Cartan subalgebra $\te$ and properties of certain objects naturally associated with $\gK$: an abelian ideal of $\be$, a nilpotent $G$-orbit in $\g$, and an involution of $\g$. %associated with  $\gK$,
\end{abstract}
\maketitle

\tableofcontents

\section{Introduction}
\noindent
Let $G$ be a simple algebraic group with $\Lie(G)=\g$. Fix a triangular decomposition 
$\g=\ut^+\oplus\te\oplus\ut^-$. Then $\Delta$ is the root system of $(\g,\te)$ and $\Delta^+$ is the set 
of positive roots corresponding to $\ut^+$. The {\it Kostant cascade\/} is a set $\gK$ of strongly 
orthogonal roots in $\Delta^+$ that is constructed recursively starting with the highest root 
$\theta\in\Delta^+$, see Section~\ref{sect:prelim-kaskad}. The construction of cascade goes back to 
B.\,Kostant, who used it for studying the center of the enveloping algebra of $\ut^+$. 
%(somewhen in 70s, I believe) and 
His construction is prominently used in some articles afterwards~\cite{GS,jos77, lw}, but Kostant's own 
publications related to the cascade appear some 40 years later~\cite{ko12,ko13}. The cascade is also 
crucial for computing the index of seaweed subalgebras of simple Lie algebras~\cite{ty04,jos}.
\\ \indent
Ever since I learned from A.\,Elashvili about the cascade at the end of 80s, I was fascinated by this 
structure. Over the years, I gathered a number of results related to the occurrences of $\gK$ in various 
problems of Combinatorics, Invariant Theory, and Representation Theory. In~\cite{p22}, I give an 
application of $\gK$ to the problem of classifying the nilradicals of parabolic subalgebras of $\g$ that 
admit a commutative polarisation. (General results on commutative polarisations are due to Elashvili and 
Ooms, see~\cite{ag03}.) Some other observations appear in this article. 

Let $\Pi\subset\Delta^+$ be the set of simple roots. If $\gamma=\sum_{\ap\in \Pi}a_\ap\ap$, then
$[\gamma:\ap]=a_\ap$ and $\htt(\gamma)=\sum_{\ap\in \Pi}[\gamma:\ap]$ is the {\it height\/} of 
$\gamma$. The set of positive roots $\Delta^+$ is a poset with respect to the root order ``$\curle$'', and 
$\gK=\{\beta_1,\dots,\beta_m\}$ inherits this structure so that $\theta=\beta_1$ is the unique maximal 
element of $\gK$. 
%We assume that the numbering of the cascade elements is a linear extension of this poset structure. That is, if $\gK=\{\beta_1,\dots,\beta_m\}$, then $\beta_i\curle \beta_j$ for $i>j$. In particular, $\beta_1=\theta$.

In Section~\ref{sect:comb-prop}, we define a rational element of $\te$ associated with $\gK$.  Let $(\ ,\,)$ 
denote the restriction of the Killing form on $\g$ to $\te$. As usual, $\te$ and $\te^*$ are identified via 
$(\ ,\,)$ and $\te^*_\BQ$ is the $\BQ$-linear span of $\Delta$.
%the restriction of the Killing form on $\g$, and we write $(\ ,\,)$ for this restriction. 
The {\it cascade element\/} of $\te_\BQ\simeq\te^*_\BQ$ is
\beq       \label{eq:casc-elem}
       x_\gK=\sum_{i=1}^m\frac{\beta_i}{(\beta_i,\beta_i)}=\frac{1}{2}\sum_{i=1}^m{\beta_i}^\vee .
\eeq
The numbers $\gamma(x_\gK)$, $\gamma\in\Delta^+$, are the eigenvalues of $\ad x_\gK$ on
$\ut^+$, and we say that they form the spectrum of $x_\gK$ on $\Delta^+$.  
It follows from~\eqref{eq:casc-elem} that $\gamma(x_\gK)\in \frac{1}{2}\BZ$ and $\beta(x_\gK)=1$ for 
any $\beta\in\gK$. We prove that $-1\le \gamma(x_\gK)\le 2$ for any $\gamma\in\Delta^+$ and if $\g$ is 
not of type $\GR{A}{2p}$, then the eigenvalues are integral (Theorem~\ref{thm:spektr-fonin}). It is also 
shown that the spectrum of $x_\gK$ on $\Delta^+\setminus\gK$ is symmetric relative to $1/2$, which 
means that if $m_\lb$ is the multiplicity of the eigenvalue $\lb$, then $m_\lb=m_{1-\lb}$. If $\theta$ is a 
fundamental weight and $\ap\in\Pi$ is the unique root such that $(\theta,\ap)\ne 0$, then $\ap$ is long 
and we prove that $\ap(x_\gK)=-1$. On the other hand, if $\theta$ is not fundamental, then $x_\gK$  
appears to be dominant. Let $\cQ$ (resp. $\cQ^\vee$) denote the {\it root} (resp. {\it coroot}) {\it lattice} in 
$\te^*_\BQ$. The corresponding dual lattices are 
\\[.6ex]
\centerline{
the coweight lattice $\cP^\vee:=\cQ^*$ \ \& \ the weight lattice $\cP:=(\cQ^\vee)^*$.}
\\[.6ex]
Hence $x_\gK\in \cP^\vee$ unless $\g$ is of type $\GR{A}{2p}$. Here $\cQ^\vee\subset\cP^\vee$ and
we prove that $x_\gK\in\cQ^\vee$ if and only if every self-dual representation of $\g$ is orthogonal (Section~\ref{sect:self-dual}).

In~\cite[Section\,3]{ooms}, A.\,Ooms describes an interesting feature of the Frobenius Lie algebras.
Let $\q=\Lie(Q)$ be a Frobenius algebra and $\xi\in\q^*$ a regular linear form, i.e., $\q^\xi=\{0\}$. Then 
the  Kirillov form $\gB_\xi$ is non-degenerate and it yields a linear isomorphism $\bi_\xi:\q^*\to \q$. 
Letting $x_\xi=\bi_\xi(\xi)$, Ooms proves that $(\ad x_\xi)^*=1- \ad x_\xi$, where $(\ad x_\xi)^*$ is the 
adjoint operator w.r.t.{} $\gB_\xi$. This implies that the spectrum of $\ad x_\xi$ on $\q$ is symmetric 
relative to $1/2$. We say that $x_\xi\in\q$ is the {\it Ooms element\/} associated with $\xi\in\q^*_{\sf reg}$. 
Let $\te_\gK\subset\te$ be the $\BC$-linear span of $\gK$. Then $\be_\gK=\te_\gK\oplus\ut^+$ is a
Frobenius Lie algebra, i.e., $\ind\be_\gK=0$, see~\cite[Sect.\,5]{p22}. 
In Section~\ref{sect:frob}, we prove that % is two-fold:
\begin{itemize}
\item %We prove that 
if $\q$ is an algebraic Lie algebra, then any Ooms element $x_\xi\in\q$ is 
semisimple;
\item %We show that 
$x_\gK$ is an Ooms element for the Frobenius Lie algebra 
$\be_\gK=\te_\gK\oplus\ut^+$.
\end{itemize}
The latter provides a geometric explanation for the symmetry of the spectrum of $x_\gK$ on 
$\Delta^+\setminus\gK$.

%Let $\te_\gK\subset\te$ be the $\BC$-linear span of $\gK$. Then $\be_\gK=\te_\gK\oplus\ut^+$ is a 
%Frobenius Lie algebra, i.e., $\ind\be_\gK=0$~(\cite[Sect.\,5]{p22}). By~\cite{ooms}, if a Lie algebra $\q$ 
%is Frobenius, then one can associate the element $x_\xi\in\q$ to any regular linear form $\xi\in\q^*$. This element has nice properties, and we call it an {\it Ooms element\/} associated with $\q$. In 
%Section~\ref{sect:frob}, we notice that $x_\xi$ is semisimple, if $\q$ is algebraic. It is also 
%shown that $x_\gK$ is an {Ooms element} associated with $\be_\gK$. Therefore, the spectrum of 
%$x_\gK$ on $\be_\gK$ is symmetric w.r.t.{} $1/2$. This provides a geometric explanation for the symmetry of the spectrum of $x_\gK$ on $\Delta^+\setminus\gK$. 

By~\cite{ko98}, one associates an abelian ideal of $\be$, $\ah_z$, to any $z\in\te$ such that 
$\gamma(z)\in\{-1,0,1,2\}$ for all $\gamma\in\Delta^+$. Namely, let 
$\Delta^+_z(i)=\{\gamma\in\Delta^+ \mid \gamma(z)=i\}$. There is a unique $w_z\in W$ such that the inversion 
set of $w_z$, $\eus N(w_z)$, equals $\Delta^+_z(1)\cup\Delta^+_z(2)$, and then $\Delta_{\lg z\rg}:=
w_z\bigl(\Delta^+_z(-1)\cup -\Delta^+_z(2)\bigr)$ is the set of roots of $\ah_z$. We notice that $w_z(z)$ is 
anti-dominant and $w_z$ is the element of minimal length having such property. Kostant's construction 
applies to $z=x_\gK$ unless $\g$ is of type $\GR{A}{2p}$ and we obtain a complete description of 
$w_\gK:=w_{x_\gK}$ and $\ah_\gK:=\ah_{x_\gK}$ (Sections~\ref{sect:ab-ideal},\,\ref{sect:w_K&perebor}). 
In this setting, we prove that $w_\gK(\theta)\in -\Pi$ and $-w_\gK(x_\gK)$ is a fundamental coweight. 
That is, if $w_\gK(\theta)=-\ap_j$, then $-w_\gK(x_\gK)=2\varpi_j/(\ap_j,\ap_j)=:\varpi_j^\vee$.
Since $[\be,\ah_\gK]\subset\ah_\gK$, the set of roots $\Delta_{\lg \gK\rg}=\Delta(\ah_\gK)$ is an upper 
ideal of the poset $(\Delta^+,\curle)$. Therefore, $\Delta_{\lg \gK\rg}$ is fully determined by the set of 
{\sl minimal} elements of $\Delta_{\lg \gK\rg}$ or the set of {\sl maximal} elements of 
$\Delta^+\setminus\Delta_{\lg \gK\rg}$. Letting $\Delta^+_\gK(i)=\Delta^+_{x_\gK}(i)$ and
$\Pi_\gK(i)=\Pi\cap\Delta^+_\gK(i)$, we prove that
\begin{enumerate}
\item $\min(\Delta_{\lg \gK\rg})=w_\gK(\Pi_\gK(-1))$ \ and \ $\max(\Delta^+\setminus \Delta_{\lg \gK\rg})=-w_\gK(\Pi_\gK(1))$;
\item if $d_\gK=1+\sum_{\ap\in \Pi_\gK({\ge}0)}[\theta:\ap]$, then  $\Delta_{\lg \gK\rg}=\{\gamma\mid \htt(\gamma)\ge d_\gK\}$.
\end{enumerate}
In order to verify (2), we use explicit formulae for $w_\gK$. To get such formulae, we exploit a description
of $w_\gK^{-1}(\Pi)$ (Theorem~\ref{thm:w^-1}). Then we check directly that $\htt(w_\gK(\ap))=d_\gK$ for 
any $\ap\in\Pi_\gK(-1)$ and that
$\#\Pi_\gK(-1)=\#\{\gamma\mid \htt(\gamma)= d_\gK\}$.

In Section~\ref{sect7:involution}, we naturally associate an involution $\sigma_\gK\in \mathsf{Aut}(\g)$ to 
$\gK$ %(or rather to $x_\gK$) 
if $x_\gK\in\cP^\vee$, i.e., $\g$ is not of type $\GR{A}{2p}$. It is proved $\sigma_\gK$ is the unique, up to 
conjugation, inner involution such that the $(-1)$-eigenspace of $\sigma_\gK$ contains a regular nilpotent
element of $\g$. %From our conse $G^\sigma

For any simple Lie algebra ($\mathfrak{sl}_{2p+1}$ included), we construct the nilpotent $G$-orbit 
associated with $\gK$, see Section~\ref{sect:nilp-orb}. 
%we naturally associate a nilpotent $G$-orbit to $\gK$. 
Let $e_\gamma\in \g_\gamma$ be a nonzero root vector 
($\gamma\in\Delta$). Then $e_\gK=\sum_{\beta\in\gK}e_\beta\in \g$ is nilpotent and the orbit 
$\co_\gK=G{\cdot}e_\gK$ does not depend on the choice of root vectors. Properties of $\co_\gK$ 
essentially depend on whether $\theta$ is fundamental or not. We prove that if $\theta$ is fundamental 
then $(\ad e_\gK)^5=0$ and $(\ad e_\gK)^4\ne 0$; whereas if $\theta$ is not fundamental then 
$(\ad e_\gK)^3=0$ (and, of course, $(\ad e_\gK)^2\ne 0$). By~\cite{p94}, this means
that $\co_\gK$ is spherical if and only if $\theta$ is not fundamental. Here $[x_\gK,e_\gK]=e_\gK$ and
$x_\gK\in \Ima(\ad e_\gK)$. Therefore, $2x_\gK$ is a {\it characteristic\/} of $e_\gK$ and the weighted Dynkin diagram of $\co_\gK$, $\gD(\co_\gK)$ is determined by the dominant element in 
$W{\cdot}(2x_\gK)$. Actually, this dominant element is $-2w_\gK(x_\gK)$, and if $\g\ne\mathfrak{sl}_{2p+1}$, then $-2w_\gK(x_\gK)=2\varpi_j^\vee$, cf. above. Hence, in these cases, $\gD(\co_\gK)$ has the unique nonzero label on the node $\ap_j$ and $\co_\gK$ is even,  
%, has a unique nonzero numerical label, if $\g$ is not of type $\GR{A}{2p}$. For, if $w_\gK(x_\gK)=-\varpi_j^\vee$, then 
cf. Tables~\ref{table:O-class}, \ref{table:O-exc}.

\un{Main notation}. 
Throughout, $G$ is a simple algebraic group with $\g=\Lie(G)$. Then
\begin{itemize}
%\item[--] $\Phi$ is the Killing form on $\g$;
\item[--] $\be$ is a fixed Borel subalgebra of $\g$ and $\ut^+=\ut=[\be,\be]$;
\item[--] $\te$ is a fixed Cartan subalgebra in $\be$ and $\Delta$ is the root system of $(\g,\te)$; 
\item[--] $\Delta^\pm$ is the set of roots corresponding to $\ut^\pm$;  
\item[--] $\Pi=\{\ap_1,\dots,\ap_n\}$ is the set of simple roots in $\Delta^+$ and the corresponding fundamental weights are $\varpi_1,\dots,\varpi_n$;
\item[--] $\te^*_\BQ$ is the $\BQ$-vector subspace of $\te^*$ spanned by $\Delta$, and $(\ ,\, )$ is the 
positive-definite form on $\te^*_\BQ$ induced by the Killing form on $\g$; as usual, 
$\gamma^\vee=2\gamma/(\gamma,\gamma)$ for $\gamma\in\Delta$.
\item[--] For each $\gamma\in\Delta$, $\g_\gamma$ is the root space in $\g$ and 
$e_\gamma\in\g_\gamma$ is a nonzero vector;
\item[--]  If $\ce\subset\ut^+$ is a $\te$-stable subspace, then $\Delta(\ce)\subset \Delta^+$ is the 
set of roots of $\ce$;
\item[--]  $\theta$ is the highest root in $\Delta^+$;
\item[--]  $W\subset GL(\te)$ is the Weyl group.
%$\bb(\q)=(\dim\q+\ind\q)/2$ for a Lie algebra $\q$;
\end{itemize}
Our main references for (semisimple) algebraic groups and Lie algebras are~\cite{VO,t41}. In explicit 
examples related to simple Lie algebras, the Vinberg--Onishchik numbering of simple roots and 
fundamental weights is used, see e.g.~\cite[Table\,1]{VO} or \cite[Table\,1]{t41}.

%%%%%%%%  Section 2   %%%%%%%%
\section{Preliminaries on root systems and the Kostant cascade} 
\label{sect:prelim-kaskad}
\noindent
We identify $\Pi$ with the vertices of the Dynkin diagram of $\g$. For any $\gamma\in\Delta^+$, let 
$[\gamma:\ap]$ be the coefficient of $\ap\in\Pi$ in the expression of $\gamma$ via $\Pi$. The 
{\it support\/} of $\gamma$ is $\supp(\gamma)=\{\ap\in\Pi\mid [\gamma:\ap]\ne 0\}$ and the {\it height 
of\/}  $\gamma$ is $\htt(\gamma)=\sum_{\ap\in\Pi}[\gamma:\ap]$. As is well known, 
$\supp(\gamma)$ is a connected subset of the Dynkin diagram. For instance, $\supp(\theta)=\Pi$
and $\supp(\ap)=\{\ap\}$. A root $\gamma$ is {\it long}, if $(\gamma,\gamma)=(\theta,\theta)$. 
We write $\Delta_l$ (resp. $\Delta_s$) for the set of long (resp. short) roots in $\Delta$. In the simply-laced case, $\Delta_s=\varnothing$.
\\ \indent
Let ``$\curle$'' denote the {\it root order\/} in $\Delta^+$, i.e., we write 
$\gamma\curle\gamma'$ if $[\gamma:\ap]\le [\gamma':\ap]$ for all $\ap\in\Pi$. Then $\gamma'$ covers
$\gamma$ if and only if $\gamma'-\gamma\in\Pi$, which implies that $(\Delta^+,\curle)$ is a graded 
poset. Write $\gamma\prec\gamma'$ if $\gamma\curle\gamma'$ and $\gamma\ne\gamma'$. An 
{\it upper ideal\/} of $(\Delta^+,\curle)$ is a subset $I$ such that if $\gamma\in I$ and 
$\gamma\prec\gamma'$, then $\gamma'\in I$. Therefore, $I$ is an upper ideal if and only if 
$\ce=\bigoplus_{\gamma\in I} \g_\gamma$ is a $\be$-ideal of $\ut$ (i.e., $[\be,\ce]\subset\ce$). 

For a dominant weight $\lb\in\te^*_\BQ$, set 
$\Delta^\pm_\lb=\{\gamma\in\Delta^\pm\mid (\lb,\gamma)=0 \}$ and
$\Delta_\lb=\Delta^+_\lb\cup \Delta^-_\lb$. Then $\Delta_\lb$ is the root system of a semisimple 
subalgebra $\g_\lb\subset \g$ and $\Pi_\lb=\Pi\cap\Delta^+_\lb$ is the set of simple roots
in $\Delta^+_\lb$. Set $\Delta_\lb^{{>}0}=\{\gamma\in \Delta^+\mid (\lb,\gamma)>0\}$. Then
$\Delta^+ =\Delta^+_\lb \sqcup \Delta_\lb^{{>}0}$ and
\begin{itemize}
%\item \ ;
\item \ $\p_\lb=\g_\lb+\be$ is a standard parabolic subalgebra of $\g$; 
\item \ the set of roots for the nilradical $\n_\lb=\p_\lb^{\sf nil}$ is $\Delta_\lb^{{>}0}$; it is also denoted by
$\Delta(\n_\lb)$.
\end{itemize}
If $\lb=\theta$, then  the nilradical $\n_\theta$ is a {\it Heisenberg Lie algebra}. %~\cite[Sect.\,2]{jos76}. 
In this case,  
$\eus H_\theta:=\Delta(\n_\theta)$ is said to be the {\it Heisenberg subset\/} (of $\Delta^+$).

The construction of the Kostant cascade $\gK$ in $\Delta^+$ is recalled below, see also~\cite[Sect.\,2]{jos77}, \cite[Section\,3a]{lw}, and \cite{ko12,ko13}. Whenever we wish to stress that
$\gK$ is associated with $\g$, we write $\gK(\g)$ for it.
\\ \indent
{\it\bfseries 1.} \  We begin with $(\g\lg 1\rg,\Delta\lg 1\rg,\beta_1)=(\g,\Delta,\theta)$ and consider the 
(possibly reducible) root system $\Delta_\theta$. The highest root $\theta=\beta_1$ is the unique element 
of the {\bf first} (highest) level in $\gK$.
Let $\Delta_\theta=\bigsqcup_{j=2}^{d_2} \Delta\lg j\rg$ be the decomposition into irreducible root 
systems and $\Pi\lg j\rg =\Pi\cap \Delta\lg j\rg$. Then
$\Pi_\theta=\bigsqcup_{j=2}^{d_2}\Pi\lg j\rg$ and $\{\Pi\lg j\rg\}$ are the connected components of
$\Pi_\theta\subset \Pi$.
\\ \indent
{\it\bfseries 2.} \ Let $\g\lg j\rg$ be the simple subalgebra of $\g$ with root system $\Delta\lg j\rg $. Then
$\g_\theta=\bigoplus_{j=2}^{d_2}\g\lg j\rg $.  Let $\beta_{j}$ be the highest root in $\Delta\lg j\rg ^{+}=\Delta\lg j\rg \cap\Delta^+$. The roots $\beta_2,\dots,\beta_{d_2}$ are 
the {\it descendants\/} of $\beta_1$, and they form the {\bf second} level of $\gK$.
Note that $\supp(\beta_j)=\Pi\lg j\rg $, hence different descendants have disjoint supports. 
\\ \indent
{\it\bfseries 3.} \ Making the same step with each pair $(\Delta\lg j\rg,\beta_j)$, $j=2,\dots,d_2$, we get a 
collection of smaller simple subalgebras inside each $\g\lg j\rg$ and smaller irreducible root systems 
inside $\Delta\lg j\rg$. This provides the descendants for each $\beta_j$ ($j=2,\dots,d_2$), i.e., the 
elements of the {\bf third} level in $\gK$. And so on...
\\ \indent
{\it\bfseries 4.} \ This procedure eventually terminates and yields a maximal set 
$\gK=\{\beta_1,\beta_2,\dots,\beta_m\}$ of {\it strongly orthogonal\/} roots in $\Delta^+$. (The latter 
means that $\beta_i\pm\beta_j\not\in\Delta$ for all $i,j$). We say that $\gK$ is the {\it Kostant cascade} 
in $\Delta^+$.  

\noindent
Thus, each $\beta_i\in\gK$ occurs as the highest root of a certain irreducible root system $\Delta\lg i\rg$ 
inside $\Delta$ such that $\Pi\lg i\rg=\Pi\cap\Delta\lg i\rg^+$ is a basis for $\Delta\lg i\rg$. 

We think of $\gK$ as poset such that $\beta_1=\theta$ is the unique maximal element and each 
$\beta_i$ covers exactly its own descendants. If $\beta_j$ is a descendant of $\beta_i$, then 
$\beta_j\prec \beta_i$ in $(\Delta^+,\curle)$
and $\supp(\beta_j)\varsubsetneq\supp(\beta_i)$, while different descendants of 
$\beta_i$ are not comparable in $\Delta^+$. Therefore the poset structure of $\gK$ is the restriction of the  root order in $\Delta^+$.
The resulting poset $(\gK, \curle)$ is called the {\it cascade poset}. The numbering of $\gK$ is not 
canonical. We only require that it is a linear extension of $(\gK,\curle)$, i.e., if $\beta_j$ is a 
descendant of $\beta_i$, then $j>i$. 

Using the decomposition $\Delta^+ =\Delta^+_\theta \sqcup \eus H_\theta$ and induction on $\rk\g$, one
readily obtains the disjoint union determined by $\gK$: 
\beq            \label{eq:decomp-Delta}
     \Delta^+=\bigsqcup_{i=1}^m \eus H_{\beta_i}=\bigsqcup_{\beta\in\gK}\eus H_\beta ,
\eeq
where $\eus H_{\beta_i}$ is the Heisenberg subset in $\Delta\lg i\rg^+$ and 
$\eus H_{\beta_1}=\eus H_\theta$. The geometric counterpart of this decomposition is the direct sum of 
vector spaces
\[
      \ut^+=\bigoplus_{i=1}^m  \h_i ,
\]
where $\h_i$ is the Heisenberg Lie algebra in $\g\lg i\rg$, with $\Delta(\h_i)=\eus H_{\beta_i}$. In 
particular, $\h_1=\n_\theta$. 

For any $\beta\in\gK$, set $\Phi(\beta)=\Pi\cap \eus H_{\beta}$. Then 
$\Pi=\bigsqcup_{\beta\in\gK} \Phi(\beta)$ and $\Phi$ is thought of as a map from $\gK$ to $2^{\Pi}$. 
Our definition of subsets $\Phi(\beta_i)$ yields the well-defined map  $\Phi^{-1}: \Pi\to \gK$, 
where $\Phi^{-1}(\ap)=\beta_i$ if $\ap\in \Phi(\beta_i)$. Note that $\ap\in\Phi(\Phi^{-1}(\ap))$. We also 
have $\#\Phi(\beta_i)\le 2$ and $\#\Phi(\beta_i)= 2$ if and only if the root system $\Delta\lg i\rg$ is of 
type $\GR{A}{n}$ with $n\ge 2$.
%equivalently, $\Pi\lg i\rg$ is a non-trivial chain in the Dynkin diagram and all roots in $\Pi\lg i\rg$ have the same length.  Note that .

The cascade poset $(\gK,\curle)$ with the set $\Phi(\beta)$ attached to each $\beta$ is
called the {\it marked cascade poset} ({\sf MCP}).  In~\cite{p22}, we use $(\gK, \curle, \Phi)$ for
describing the nilradicals of parabolic subalgebras that admit a commutative polarisation.
%We think of {\sf MCP} as a triple $(\gK, \curle, \Phi)$.
The Hasse diagrams of $(\gK, \curle)$ are presented in Appendix~\ref{sect:tables}, where the Cartan 
label of the simple Lie algebra $\g\lg j\rg$ is attached to the node $\beta_j$. 
These diagrams (without Cartan labels) appear already in~\cite[Section\,2]{jos77}. 
%Note that Joseph uses the reverse order in $\gK$ and our upper ideals of 
%$(\gK,\curle)$ are ``parabolic subsets'' in Joseph's terminology.
\\  \indent
Let us gather some properties of $(\gK,\curle)$ that either are explained above or easily follow from the construction. % of $\gK$.

\begin{lm}    \label{lm:K-svojstva}   
Let $(\gK, \curle, \Phi)$ be the {\sf MCP} for a simple Lie algebra $\g$.
\begin{enumerate}
\item The partial order in $\gK$ coincides with the restriction to $\gK$ of the root order in $\Delta^+$;
\item  $\beta_i,\beta_j\in\gK$ are comparable if and only if\/ 
$\supp(\beta_i)\cap\supp(\beta_j)\ne \varnothing$; and then one support is properly contained in the 
other;
\item each $\beta_j$, $j\ge 2$, is covered by a unique element of $\gK$;
\item for any $\beta_j\in\gK$, the interval 
$[\beta_j,\beta_1]_{\gK}=\{\nu\in\gK\mid \beta_j\curle\nu\curle\beta_1\} \subset\gK$ is a chain.
\item  For $\ap\in\Pi$, we have $\ap\in\Phi(\beta_i)$ if and only if $(\ap, \beta_i)>0$.
\end{enumerate}
\end{lm}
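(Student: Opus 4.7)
The plan is to exploit the recursive definition of the Kostant cascade and prove the five claims by induction on $\rk\g$, treating them in the order $(3) \to (2) \to (1) \to (4) \to (5)$, so that the subtler statements rest on the combinatorial tree structure of descendance. For (3), observe that each $\beta_j$ with $j\ge 2$ is introduced at exactly one stage of the recursive construction, namely as the highest root of a single irreducible component $\Delta\lg i\rg$ attached to a previously constructed $\beta_i$. Any $\beta_k\in\gK$ covering $\beta_j$ would have to contain $\beta_j$ among its iterated descendants, and the tree-like nature of the construction forces this parent to be unique.

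For (2), the induction reduces matters to showing that the descendants $\beta_2,\dots,\beta_{d_2}$ of $\theta$ have pairwise disjoint supports each strictly contained in $\supp(\theta)=\Pi$. By construction $\supp(\beta_j)=\Pi\lg j\rg$, and the $\Pi\lg j\rg$ are the connected components of $\Pi_\theta=\Pi\setminus\Phi(\theta)\subsetneq\Pi$, giving disjointness and proper containment at the top level; applying the induction hypothesis inside each $\g\lg j\rg$ propagates the statement down the cascade. Conversely, if $\supp(\beta_i)\cap\supp(\beta_j)\ne\varnothing$, then tracing the two roots back through the construction tree shows they cannot land in different irreducible components at any stage, so one is an iterated descendant of the other.

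For (1), the forward implication is immediate: if $\beta_j$ is an iterated descendant of $\beta_i$, then $\beta_j\in\Delta\lg i\rg^+$ and $\beta_i$ is the highest root there, so $\beta_i-\beta_j$ is a non-negative integral combination of $\Pi\lg i\rg\subset\Pi$, whence $\beta_j\curle\beta_i$. Conversely, $\beta_j\curle\beta_i$ in the root order forces $\supp(\beta_j)\subseteq\supp(\beta_i)$, so (2) yields comparability in $\gK$ while the forward direction rules out the reverse inequality. Part (4) is now immediate from (3): every element of $[\beta_j,\beta_1]_\gK$ lies on the unique ancestor chain from $\beta_j$, which is totally ordered. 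For (5), the definitions give $\Phi(\beta_i)=\Pi\lg i\rg\cap\{\gamma : (\gamma,\beta_i)>0\}$; and for $\ap\in\Pi\setminus\Pi\lg i\rg$ one computes $(\ap,\beta_i)=\sum_{\gamma\in\Pi\lg i\rg}[\beta_i:\gamma](\ap,\gamma)\le 0$, since distinct simple roots are weakly obtuse, so $(\ap,\beta_i)>0$ already forces $\ap\in\Pi\lg i\rg$.

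The main obstacle is really (2): one must articulate cleanly that the recursive construction produces a rooted tree on $\gK$ whose support-nesting encodes the poset relation, and that two cascade roots with overlapping supports cannot belong to different components at any level. Once this tree picture is in hand, the remaining assertions follow quickly from the construction and the elementary properties of the root order.
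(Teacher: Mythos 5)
Your proof is correct, and it follows the same route the paper has in mind: the paper states this lemma without proof, remarking only that the properties ``are explained above or easily follow from the construction,'' and your argument is precisely a careful write-up of that reasoning --- the rooted-tree structure of the recursive construction, the disjointness of the supports $\Pi\lg j\rg$ of descendants, and the standard facts that the highest root of an irreducible component has full support there and that distinct simple roots are weakly obtuse. Nothing further is needed.
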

Clearly, $\#\gK\le \rk\g$ and  the equality holds if and only if each $\beta_i$ is a multiple of a fundamental 
weight for $\g\lg i\rg$. Recall that $\theta$ is a multiple of a fundamental weight of $\g$ if and only if $\g$ 
is not of type $\GR{A}{n}$, $n\ge 2$. It is well known that the following conditions are equivalent:
{\sl (1)} $\ind\be=0$; {\sl (2)} $\#\gK=\rk\g$, see e.g.~\cite[Prop.\,4.2]{ap97}. 
This happens exactly if $\g$ is not of type $\GR{A}{n} \ (n\ge 2)$, 
$\GR{D}{2n+1} \ (n\ge 2)$, $\GR{E}{6}$. Then $\Phi$ yields a bijection between $\gK$ and $\Pi$.

For future reference, we record the following observation.
\begin{lm}         \label{lm:1-short}
If\/ $\g$ is of type $\GR{B}{2k+1}$ or $\GR{G}{2}$, then $\gK$ contains a unique short root, which is 
simple. In all other cases,  all elements of\/ $\gK$ are long. 
\end{lm}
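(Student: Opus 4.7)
I would argue via the recursive construction of $\gK$. The key observation is that the highest root of an irreducible root system $R$ is always long in $R$, i.e., it attains the maximum length among elements of $R$. Applied to $\Delta\lg i\rg\subset\Delta$, this gives: if $\Delta\lg i\rg$ contains any root that is long in $\Delta$, then $\beta_i$ shares that length and is long in $\Delta$; equivalently, $\beta_i\in\Delta_s$ if and only if $\Delta\lg i\rg\subset\Delta_s$. The task therefore reduces to locating those sub-root-systems $\Delta\lg i\rg$ in the cascade recursion that lie entirely inside $\Delta_s$.

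The simply-laced types are vacuous since $\Delta_s=\varnothing$. For $\g=\GR{C}{n}$ one has $\theta=2e_1$ and $\Delta_\theta$ is of type $C_{n-1}$ on the coordinates $e_2,\dots,e_n$; inductively every $\Delta\lg i\rg$ is of type $C_k$ and contains the long roots $\pm 2e_j$, so no cascade element is short. For $\g=\GR{F}{4}$ I would verify directly that $\Delta_\theta$ is of type $C_3$, already containing the $F_4$-long roots $\pm(e_1-e_2),\ \pm(e_3\pm e_4)$; recursing inside this $C_3$, the orthogonal complement of its highest root is a rank-$2$ sub-system that again contains $F_4$-long roots, and the next step yields an $A_1$ spanned by an $F_4$-long root. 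Hence all four elements of $\gK(F_4)$ are long.

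For $\g=\GR{B}{n}$ the highest root is $\theta=e_1+e_2$ and $\Delta_\theta$ is of type $\GR{B}{n-2}$ on $e_3,\dots,e_n$. The recursion $B_n\to B_{n-2}\to B_{n-4}\to\cdots$ terminates at $B_2$ (if $n$ is even) or at $B_1=\{\pm e_n\}$ (if $n$ is odd). In the even case every intermediate $B_{2j}$ with $j\ge 1$ contains long roots of $\Delta$, so all cascade elements are long. In the odd case $n=2k+1$, the terminal $B_1$ is the unique $\Delta\lg i\rg$ contained in $\Delta_s$, and its single positive root is the unique short simple root of $\GR{B}{2k+1}$, which is therefore the short element of $\gK$ claimed by the lemma. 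For $\g=\GR{G}{2}$ a short inner-product check (with $\theta=3\alpha_1+2\alpha_2$, $\alpha_1$ short and $\alpha_2$ long, giving $(\alpha_1,\theta)=0$) yields $\Delta_\theta=\{\pm\alpha_1\}$, which is short-only; thus $\gK=\{\theta,\alpha_1\}$ with $\alpha_1$ simple and short.

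The main obstacle is the explicit bookkeeping for $\GR{F}{4}$, namely confirming at every level of the recursion that the sub-root-system still contains long roots of $F_4$; the parity split for $\GR{B}{n}$ and the one-step reduction for $\GR{G}{2}$ are straightforward once the above principle is in place. One could alternatively read the conclusion directly off the Hasse diagrams in Appendix~\ref{sect:tables}, since the Cartan label attached to each $\beta_j$ records the type of $\g\lg j\rg$ and hence whether $\beta_j$ is long or short in $\g$.
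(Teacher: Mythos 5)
Your argument is correct, and it is essentially the check the paper has in mind: the lemma is recorded without proof, being immediate from the recursive construction and the explicit lists of cascade elements in Appendix~\ref{sect:tables}. Your organizing principle --- that $\beta_i$ is short in $\Delta$ if and only if the whole subsystem $\Delta\lg i\rg$ lies in $\Delta_s$, because the highest root of an irreducible system has maximal length in it --- is a clean way to reduce the case check to locating the entirely-short subsystems in the recursion tree, and you correctly find that the only ones are the terminal $\GR{B}{1}=\{\pm\esi_{2k+1}\}$ for $\GR{B}{2k+1}$ and the $\{\pm\ap_1\}$ for $\GR{G}{2}$, both spanned by a short simple root. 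One small inaccuracy to tighten: for $\GR{B}{n}$ (and likewise $\GR{D}{n}$) the subsystem $\Delta_\theta$ is of type $\GR{A}{1}+\GR{B}{n-2}$, not $\GR{B}{n-2}$ --- the extra $\GR{A}{1}$ is $\{\pm(\esi_1-\esi_2)\}$, and it recurs at every level, contributing the cascade elements $\esi_{2i-1}-\esi_{2i}$. Since these $\GR{A}{1}$'s are spanned by long roots, they produce no short cascade elements and your conclusion stands, but as written your recursion $\GR{B}{n}\to\GR{B}{n-2}\to\cdots$ does not account for all of $\gK$. With that bookkeeping corrected, the proof is complete.
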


Write $r_\gamma\in W$ for the reflection relative to $\gamma\in\Delta$.
\begin{prop}[{\cite[Prop.\,1.10]{ko12}}]   \label{prop:longest&K}
The product $\omega_0:=r_{\beta_1}{\cdot}\ldots{\cdot}r_{\beta_m}$
does not depend on the order of factors and it is the longest element of\/ $W$ (i.e., 
$\omega_0(\Delta^+)=\Delta^-$). In particular, $\omega_0(\beta_i)=-\beta_i$ for each $i$. 
\end{prop}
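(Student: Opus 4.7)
The plan is to derive both statements from two ingredients: the pairwise strong orthogonality of the cascade roots, and an induction on $\rk\g$ via the Heisenberg decomposition $\Delta^+=\eus H_\theta\sqcup\Delta^+_\theta$.

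First I would handle the order-independence and the identity $\omega_0(\beta_i)=-\beta_i$ simultaneously. Since the elements of $\gK$ are pairwise (strongly) orthogonal, $(\beta_i,\beta_j)=0$ for $i\ne j$, so the reflections $r_{\beta_i}$ and $r_{\beta_j}$ commute and $r_{\beta_j}(\beta_i)=\beta_i$. Commutativity makes $\omega_0$ depend only on the set $\gK$, and combining the fixing property with $r_{\beta_i}(\beta_i)=-\beta_i$ yields $\omega_0(\beta_i)=-\beta_i$.

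For the longest-element claim I would induct on $\rk\g$; the base case $\rk\g=1$ is immediate. In the inductive step, the cascade construction describes the descendants of $\beta_1=\theta$ as precisely the union of the cascades of the simple ideals of $\g_\theta$, so $\{\beta_2,\dots,\beta_m\}=\gK(\g_\theta)$; applied componentwise, the inductive hypothesis then gives that $w':=r_{\beta_2}\cdots r_{\beta_m}$ is the longest element of $W_\theta:=W(\g_\theta)$, so $w'(\Delta^+_\theta)=\Delta^-_\theta$. Writing $\omega_0=w'\cdot r_\theta$ (the commutativity step permits this), I check $\omega_0(\Delta^+)\subset\Delta^-$ separately on the two pieces of the Heisenberg decomposition. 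For $\gamma\in\Delta^+_\theta$ it is immediate: $r_\theta(\gamma)=\gamma$ and then $w'(\gamma)\in\Delta^-_\theta$. For $\gamma\in\eus H_\theta$ the key point is that $\theta$ being highest forces $\eus H_\theta=\{\mu\in\Delta\mid(\mu,\theta)>0\}$; indeed any positive root $\mu$ satisfies $(\mu,\theta)\ge 0$ (otherwise $\mu+\theta$ would be a root exceeding $\theta$), so roots with positive $\theta$-pairing are automatically positive. An elementary $\tri$-string computation then yields $r_\theta(\gamma)\in-\eus H_\theta$, and since each $r_{\beta_j}$ ($j\ge 2$) fixes $\theta$, $w'\in W_\theta$ preserves every level set $\{\mu\in\Delta\mid(\mu,\theta)=c\}$, in particular $\eus H_\theta$; hence $w'(r_\theta(\gamma))\in -\eus H_\theta\subset\Delta^-$. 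A cardinality argument then promotes the inclusion to the equality $\omega_0(\Delta^+)=\Delta^-$, identifying $\omega_0$ with the longest element of $W$.

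The main obstacle I anticipate is the $\gamma\in\eus H_\theta$ case of the induction, whose two ingredients---the identification of $\eus H_\theta$ with the entire set of roots having positive $\theta$-pairing, and the level-preservation of $W_\theta$---carry essentially all the content of the proof; once both are in place, the induction runs smoothly.
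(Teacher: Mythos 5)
Your argument is correct. Note that the paper does not prove this statement at all — it is quoted from Kostant \cite[Prop.\,1.10]{ko12} — so there is no in-text proof to compare against; your write-up supplies exactly the standard inductive argument behind Kostant's result. All the key points check out: commutativity of the $r_{\beta_i}$ from orthogonality; the identification $\{\beta_2,\dots,\beta_m\}=\gK(\g_\theta)$, which is literally the recursive definition of the cascade, so the inductive hypothesis (applied to each simple factor of $\g_\theta$, each of strictly smaller rank) gives $w'(\Delta^+_\theta)=\Delta^-_\theta$; the fact that $\eus H_\theta=\{\mu\in\Delta\mid(\mu,\theta)>0\}$ because $\theta$ is dominant; the computation $r_\theta(\gamma)=\gamma-\theta\in-\eus H_\theta$ for $\gamma\in\eus H_\theta\setminus\{\theta\}$ (using that $\theta$ is long, so $(\gamma,\theta^\vee)=1$ there); and the preservation of the $\theta$-level sets by $w'$ since $w'(\theta)=\theta$. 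The cardinality argument at the end is also fine since $\omega_0$ permutes $\Delta$.
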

It follows from this that $\omega_0=-1$ if and only if $m=\rk\g$.

%%%%%%%%%%%%  Section 3 %%%%%%%%
\section{The cascade element of a Cartan subalgebra}
\label{sect:comb-prop}

\noindent
In this section, we define a certain element of $\te$ associated with the cascade $\gK$ and consider its 
properties related to $\Delta$. As usual, we identify $\te$ and $\te^*$ using the restriction of the Killing form to $\te$. 

\begin{df}   \label{def:casc-elem}
The {\it cascade element\/} of $\te$ is the unique element 
$x_\gK\in \lg\beta_1,\dots,\beta_m\rg_\BQ\subset \te_\BQ$ such that $\beta_i(x_\gK)=1$ for each $i$. 
\end{df}
Since the roots $\{\beta_i\}$ are pairwise orthogonal, we have
\beq       \label{eq:x_K}
       x_\gK=\sum_{i=1}^m\frac{\beta_i}{(\beta_i,\beta_i)}=\frac{1}{2}\sum_{i=1}^m \beta_i^\vee .
\eeq
Therefore, $\gamma(x_\gK)\in \frac{1}{2}\BZ$ for any $\gamma\in \Delta$, and it follows from 
Prop.~\ref{prop:longest&K} that $\omega_0(x_\gK)=-x_\gK$. If $\gK\subset\Delta_l$, 
%all elements of $\gK$ have the same length (and thereby are long), 
then one can also write $\displaystyle x_\gK=\frac{1}{(\theta,\theta)}\sum_{i=1}^m \beta_i$.  

\textbullet \ \ It is a typical pattern related to $\gK$ and $x_\gK$  that a certain property holds for series $\GR{A}{n}$ 
and $\GR{C}{n}$, but does not hold for the other simple types. The underlying reason is that 
\\[.6ex]
\centerline{\it 
$\theta$ is a fundamental weight if and only if\/ $\g$ is {\bf not} of type $\GR{A}{n}$ or $\GR{C}{n}$.
}
\\[.6ex]
(Recall that $\theta=\varpi_1+\varpi_n$ for $\GR{A}{n}$ and $\theta=2\varpi_1$ for $\GR{C}{n}$.)
It is often possible to prove that a property does not hold if $\theta$ is fundamental, and then 
directly verify that that property does hold for $\slno$ and $\spn$ (or vice versa). 
\\ \indent
\textbullet \ \ Yet another pattern is
that one has to often exclude the series $\GR{A}{2n}$ from consideration. The reason is that 
\\[.6ex]
\centerline{\it 
the \ \emph{Coxeter number} of\/ $\g$, $\mathsf{h}=\mathsf{h}(\g)$, is odd if and only if\/ $\g$ is of type 
$\GR{A}{2n}$. 
}
\\[.6ex] (The same phenomenon occurs also in the context of the McKay correspondence.)
Recall that $\mathsf{h}=1+\sum_{\ap\in\Pi}[\theta:\ap]=1+\htt(\theta)$.

To get interesting properties of $x_\gK$, we need some preparations.
Set $n_\ap=[\theta:\ap]$, i.e., $\theta=\sum_{\ap\in\Pi}n_\ap\ap$. Suppose that $\theta$ is a fundamental weight, and let $\tap$
be the unique simple root such that $(\theta,\tilde\ap)\ne 0$. Then $(\theta,{\tap}^\vee)=1$ and 
$(\theta^\vee,{\tap})=1$, hence $\tilde\ap$ is long. Next, 
\[
    (\theta,\theta)=(\theta, \sum_{\ap\in\Pi}n_\ap\ap) =(\theta, n_\tap\tap)=\frac{1}{2}n_\tap(\tap,\tap),
\]
which means that $n_\tap=2$. Let $\tilde\Pi$ be the set of simple roots that are adjacent to $\tap$ in the
Dynkin diagram. Since $\tap$ is long, one also has
$\tilde\Pi=\{\nu\in\Pi\mid (\nu,\tap^\vee)=-1\}$. Then
\[
  1= (\theta,\tap^\vee)=n_\tap(\tap,\tap^\vee)+\sum_{\nu\in \tilde\Pi}n_\nu(\nu,\tap^\vee)=4-\sum_{\nu\in \tilde\Pi}n_\nu .
\]
Hence $\sum_{\nu\in\tilde\Pi}n_\nu=3$ and $\#(\tilde\Pi)\le 3$. Set $J=\{i\in [1,m]\mid (\tap,\beta_i)<0\}$.
Then $1\not\in J$ and we proved in~\cite[Sect.\,6]{p05} that 
%$\tap$ belongs to the $\BQ$-linear span of $\gK$ and, more precisely, 
\beq    \label{eq:tap}
     \tap=\frac{1}{2}\left(\theta-\sum_{i\in J}\frac{(\tap,\tap)}{(\beta_i,\beta_i)}\beta_i\right)=:
     \frac{1}{2}(\theta-\sum_{i\in J}c_i\beta_i) \ \text{ and } \ \sum_{i\in J}c_i=3 ,
\eeq
see~\cite[Lemma\,6.5]{p05}. Here $c_i\in \BN$ and therefore $\#J\le 3$. Set $\tilde{\gK}:=\{\beta_i \mid i\in J\}$.

We say that $x\in\te$ is {\it dominant}, if $\gamma(x)\ge 0$ for all $\gamma\in\Delta^+$.

\begin{lm}    \label{lm:not-dominant}
If $\theta$ is fundamental, then $\tap(x_\gK)=-1$ and $(\theta-\tap)(x_\gK)=2$.
%$-1,2\in \{\gamma(x_\gK)\mid \gamma\in\Delta^+\}$. 
In particular, $x_\gK\in\te$ is not dominant.
\end{lm}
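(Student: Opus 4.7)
The plan is to read off the value $\tilde\alpha(x_\gK)$ directly from the identity \eqref{eq:tap} that has just been established, and then get $(\theta-\tilde\alpha)(x_\gK)$ for free from $\theta(x_\gK)=\beta_1(x_\gK)=1$. The only input needed is the defining property $\beta_i(x_\gK)=1$ for every $\beta_i\in\gK$ (Definition~\ref{def:casc-elem}) together with the expansion of $\tilde\alpha$ as a rational combination of $\theta$ and the $\beta_i$ with $i\in J$.

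Concretely, I would apply the linear functional $x_\gK\colon\te^*_\BQ\to\BQ$ to both sides of the formula
\[
   \tilde\alpha=\tfrac{1}{2}\bigl(\theta-\textstyle\sum_{i\in J}c_i\beta_i\bigr),\qquad \sum_{i\in J}c_i=3,
\]
recalled from \cite[Lemma\,6.5]{p05}. Since $\theta=\beta_1\in\gK$, this gives
\[
   \tilde\alpha(x_\gK)=\tfrac{1}{2}\Bigl(\beta_1(x_\gK)-\sum_{i\in J}c_i\,\beta_i(x_\gK)\Bigr)
   =\tfrac{1}{2}\bigl(1-\textstyle\sum_{i\in J}c_i\bigr)=\tfrac{1}{2}(1-3)=-1.
\]
Consequently $(\theta-\tilde\alpha)(x_\gK)=\theta(x_\gK)-\tilde\alpha(x_\gK)=1-(-1)=2$. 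Finally, non-dominance of $x_\gK$ follows immediately: $\tilde\alpha$ is a simple (hence positive) root on which $x_\gK$ takes the negative value $-1$.

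There is essentially no obstacle here — the heavy lifting was done in establishing \eqref{eq:tap}, namely identifying the precise rational combination of $\theta$ and the descendants $\beta_i$, $i\in J$, that realizes $\tilde\alpha$, together with the numerical constraint $\sum_{i\in J}c_i=3$. Once that is in hand, the lemma is a one-line evaluation exploiting that $x_\gK$ takes the value $1$ on every cascade root. The only thing worth double-checking is the hypothesis that $\theta$ be fundamental, which is what guarantees the existence of the unique $\tilde\alpha$ and the validity of \eqref{eq:tap}; under this hypothesis the argument is uniform across all such simple types.
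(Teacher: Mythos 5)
Your proposal is correct and follows the same route as the paper: evaluate $x_\gK$ on the identity \eqref{eq:tap} using $\beta_i(x_\gK)=1$ and $\sum_{i\in J}c_i=3$ to get $\tap(x_\gK)=(1-3)/2=-1$, then deduce $(\theta-\tap)(x_\gK)=2$ and non-dominance. The only cosmetic difference is that the paper writes the evaluation via the explicit expression \eqref{eq:x_K} for $x_\gK$ rather than via the defining property of Definition~\ref{def:casc-elem}, which is the same computation.
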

\begin{proof}
Take $\gamma=\tap$. Using \eqref{eq:x_K}  and \eqref{eq:tap}, we obtain
\[
  \tap(x_\gK)= \frac{1}{2}\left[\frac{(\theta,\theta)}{(\theta,\theta)}
  -\sum_{i\in J}\frac{c_i(\beta_i,\beta_i)}{(\beta_i,\beta_i)}\right]=(1-3)/2=-1 .  %\qedhere
\] 
Then $(\theta-\tap)(x_\gK)=1+1=2$.
\end{proof}
Conversely, if $\theta$ is {\bf not} fundamental, then the example below shows that $x_\gK$ {\bf is} 
dominant.
\begin{ex}             \label{ex:sl-sp}
(1) For $\g=\slno$, one has $\beta_i=\esi_i-\esi_{n+2-i}$ with $i=1,\dots,m=[(n+1)/2]$. Here $\sum_{j=1}^{n+1}\esi_j=0$ and $\varpi_j=\esi_1+\dots+\esi_j$. Hence
\[
   (\theta,\theta){\cdot}x_\gK=\sum_{i=1}^m \beta_i=\begin{cases} 2\varpi_p, & \text{if } \ n=2p{-}1 \\
   \varpi_p+\varpi_{p+1}, & \text{if } \  n=2p . \end{cases}
\]   
In the matrix form, one has $x_\gK=\begin{cases} 
\mathsf{diag}(1/2,\dots,1/2,-1/2,\dots,-1/2), &  \text{if } \ n=2p{-}1 \\
\mathsf{diag}(1/2,\dots, 1/2,0,-1/2,\dots,-1/2), &  \text{if } \ n=2p \ . 
\end{cases}$

(2) For $\spn$, one has $\beta_i=2\esi_i$ with $i=1,\dots,m=n$. Hence 
$(\theta,\theta){\cdot}x_\gK=\sum_{i=1}^n 2\esi_i=2\varpi_n$.
\end{ex}

Consider the multiset $\eus{M}_\gK$ of values 
$\{\gamma(x_\gK)\mid \gamma\in \Delta^+\setminus \gK\}$. That is, each value $d$ is taken with 
multiplicity $m_d=\# \eus R_d$, where 
$\eus R_d=\{\gamma\in\Delta^+\setminus \gK \mid  \gamma(x_\gK)=d\}$. 
%The following  is a sort of generalisation of Lemma~\ref{lm:not-dominant}. 

\begin{lm}     \label{lm:symmetry-value}
For any $d$, there is a natural bijection between the sets $\eus R_d$ and $\eus R_{1-d}$. In particular,
$m_d=m_{1-d}$, i.e., the multiset $\eus{M}_\gK$ is symmetric w.r.t. $1/2$. 
\end{lm}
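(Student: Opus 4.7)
The plan is to exhibit an explicit involution $\iota\colon \Delta^+\setminus\gK \to \Delta^+\setminus\gK$ satisfying $\iota(\gamma)(x_\gK) = 1 - \gamma(x_\gK)$ for every $\gamma$. Such an $\iota$ will automatically restrict to bijections $\eus R_d \leftrightarrow \eus R_{1-d}$ for each $d$, yielding both the natural bijection and the multiplicity identity $m_d = m_{1-d}$.

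The construction is assembled block-by-block from the Heisenberg decomposition~\eqref{eq:decomp-Delta},
\[
  \Delta^+\setminus\gK \;=\; \bigsqcup_{\beta\in\gK}\bigl(\eus H_\beta\setminus\{\beta\}\bigr),
\]
so it suffices to define $\iota$ on each block. On $\eus H_\beta\setminus\{\beta\}$ the proposed rule is $\iota(\gamma) = \beta - \gamma$. Once this is known to be a well-defined involution of the block, the eigenvalue identity is immediate from Definition~\ref{def:casc-elem}: since $\beta(x_\gK) = 1$, one has $(\beta - \gamma)(x_\gK) = 1 - \gamma(x_\gK)$. Moreover $\beta - \gamma \notin \gK$, because by \eqref{eq:decomp-Delta} the only element of $\gK$ lying in $\eus H_\beta$ is $\beta$ itself, so $\iota$ indeed lands in $\Delta^+\setminus\gK$.

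The main obstacle, and essentially the only nontrivial step, is to verify that $\gamma \mapsto \beta - \gamma$ really sends $\eus H_\beta\setminus\{\beta\}$ into itself. I would deduce this from the Heisenberg structure of $\h_i \subset \g\lg i\rg$ with $\beta = \beta_i$: its one-dimensional center is $\g_\beta$, and the Lie bracket induces a non-degenerate skew pairing $\g_\gamma \times \g_{\beta-\gamma} \to \g_\beta$ for each $\gamma \in \eus H_\beta\setminus\{\beta\}$. In root-theoretic terms, this forces $\beta - \gamma \in \Delta^+$ with $(\beta-\gamma,\beta) > 0$, i.e., $\beta - \gamma \in \eus H_\beta$. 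Fixed points are automatically excluded since $\Delta$ is reduced: $\iota(\gamma) = \gamma$ would require $\gamma = \beta/2 \in \Delta$. Assembling these block involutions then yields the required global $\iota$ and proves the lemma.
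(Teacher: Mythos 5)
Your proof is correct and follows essentially the same route as the paper: both decompose $\Delta^+\setminus\gK$ via the Heisenberg partition \eqref{eq:decomp-Delta} and use the map $\gamma\mapsto\beta_j-\gamma$ on $\eus H_{\beta_j}\setminus\{\beta_j\}$, together with $\beta_j(x_\gK)=1$. The only difference is that you spell out why $\beta_j-\gamma$ again lies in $\eus H_{\beta_j}$ (via the Heisenberg pairing), a step the paper simply asserts.
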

\begin{proof}
For any $\gamma\in \Delta^+\setminus \gK$, there is a unique $j\in \{1,\dots,m\}$ such that 
$\gamma\in\Delta(\h_j)\setminus\{\beta_j\}$, see~\eqref{eq:decomp-Delta}. Then $\beta_j-\gamma\in \Delta(\h_j)$ and
$\gamma(x_\gK)+(\beta_j-\gamma)(x_\gK)=1$.
\end{proof}

As we shall see in Section~\ref{sect:frob}, there is a geometric reason for such a symmetry. It is 
related to the fact that a certain Lie algebra is Frobenius.
\begin{thm}     \label{thm:spektr-fonin}
If $\g$ is a simple Lie algebra, then $-1\le\gamma(x_\gK)\le 2$ for all $\gamma\in\Delta^+$. More 
precisely,
\begin{enumerate}
\item \ If\/ $\g$ is of type $\GR{A}{2p-1}$ or $\GR{C}{n}$, then 
$\{\gamma(x_\gK)\mid \gamma\in\Delta^+\}=\{0,1\}$;
\item \ If\/ $\g$ is of type $\GR{A}{2p}$, then 
$\{\gamma(x_\gK)\mid \gamma\in\Delta^+\}=\{0,\frac{1}{2}, 1\}$;
\item \ For all other types, i.e., if $\theta$ is fundamental, we have 
$\{\gamma(x_\gK)\mid \gamma\in\Delta^+\}=\{-1,0,1,2\}$.
\end{enumerate}
In particular, if\/ $\g$ is not of type $\GR{A}{2p}$, then $\gamma(x_\gK)\in \BZ$ \ for any 
$\gamma\in\Delta^+$.  
\end{thm}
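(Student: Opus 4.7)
My plan is to prove the three parts of the theorem separately. For parts (1) and (2), where $\theta$ is not fundamental, I would use the explicit matrix forms of $x_\gK$ from Example~\ref{ex:sl-sp} and evaluate on each positive root directly. In type $\GR{A}{n}$, the root $\esi_i - \esi_j$ paired with the diagonal $x_\gK$ yields a value in $\{0, 1\}$ for $n = 2p - 1$, with the additional value $1/2$ arising for $n = 2p$ when exactly one of $i, j$ is the middle index. In type $\GR{C}{n}$, one has $x_\gK = \tfrac{1}{2}(\esi_1 + \dots + \esi_n)$, which evaluates to $0$ on $\esi_i - \esi_j$ and to $1$ on $\esi_i + \esi_j$ and on $2\esi_i$, giving $\{0,1\}$.

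For part (3), where $\theta$ is fundamental, I would induct on $\rk\g$, with the trivial base case $\g = \tri$. The key tools are the Heisenberg decomposition $\Delta^+ = \Delta_\theta^+ \sqcup \eus H_\theta$ and the splitting $x_\gK = \tfrac{1}{2}\theta^\vee + x_{\gK(\g_\theta)}$. For $\gamma \in \Delta_\theta^+$, the orthogonality $\gamma \perp \theta$ gives $\gamma(x_\gK) = \gamma(x_{\gK(\g_\theta)})$. Direct inspection of $\g_\theta$ for each fundamental type shows that its simple factors are always among $\{\GR{A}{1}, \GR{B}{k}, \GR{D}{k}, \GR{A}{5}, \GR{D}{6}, \GR{E}{7}, \GR{C}{3}\}$; crucially, no $\GR{A}{2p}$ factor appears, and so the inductive hypothesis yields values in $\{-1, 0, 1, 2\}$.

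For $\gamma \in \eus H_\theta$, one has $\gamma(\theta^\vee) = [\gamma : \tap] \in \{1, 2\}$ (using that $\tap$ is long), with the value $2$ attained only at $\gamma = \theta$, giving $\gamma(x_\gK) = 1$. For roots with $[\gamma : \tap] = 1$, the identity
\beq
      \gamma(x_\gK) = \tfrac{1}{2} + \gamma(x_{\gK(\g_\theta)})
\eeq
reduces the question to evaluating $x_{\gK(\g_\theta)}$ on the weights of the $\g_\theta$-module $\g_1 := \bigoplus_{[\gamma : \tap] = 1}\g_\gamma$. The extreme values $-1$ (at $\tap$) and $2$ (at $\theta - \tap$) are provided by Lemma~\ref{lm:not-dominant}, while the Heisenberg symmetry of Lemma~\ref{lm:symmetry-value} pairs the remaining values about $1/2$. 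The main obstacle is the case-by-case verification that all such weight evaluations lie in $\{-3/2, -1/2, 1/2, 3/2\}$, which I would carry out using the standard $\g_\theta$-module structure of $\g_1$ (e.g., $\Lambda^3\BC^6$ for $\GR{E}{6}$, the half-spin module for $\GR{E}{7}$, the $56$-dimensional module for $\GR{E}{8}$) together with the explicit expression of $x_{\gK(\g_\theta)}$ as a half-sum of coroots.
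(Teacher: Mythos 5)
Your proposal is correct in outline but follows a genuinely different route from the paper. The paper does not induct on rank: it writes $\gamma$ (or, when $\#\gK<\rk\g$, its symmetrisation $\bar\gamma=\frac{1}{2}(\gamma-\omega_0(\gamma))$, which satisfies $\bar\gamma(x_\gK)=\gamma(x_\gK)$ because $\gK$ spans and is a basis of $\te^{-\omega_0}$) in the orthogonal basis $\gK$, so that $\gamma(x_\gK)=\sum_i k_i$ with $k_i=\frac{1}{2}(\gamma,\beta_i^\vee)\in\frac{1}{2}\BZ$, and then uses the length identity $4(\gamma,\gamma)=(\theta,\theta)+\sum_{i\ge2}(2k_i)^2(\beta_i,\beta_i)$ together with Lemma~\ref{lm:1-short} to enumerate the very few admissible coefficient patterns. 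That argument is uniform across all types and needs no knowledge of $\g_\theta$ or of the $\g_\theta$-module structure of $\n_\theta$. Your induction via $\Delta^+=\Delta^+_\theta\sqcup\eus H_\theta$ and $x_\gK=\frac{1}{2}\theta^\vee+x_{\gK(\g_\theta)}$ is sound: the reduction of $\Delta^+_\theta$ to the inductive hypothesis is clean (and your check that no $\GR{A}{2p}$ factor occurs in $\g_\theta$ is exactly what is needed to keep the values integral — note the induction must carry all three parts of the theorem as hypothesis, since factors of type $\GR{A}{2p-1}$ and $\GR{C}{k}$ do occur), and the identity $\gamma(x_\gK)=\frac{1}{2}+\gamma(x_{\gK(\g_\theta)})$ on $\eus H_\theta\setminus\{\theta\}$ is correct. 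What your approach buys is a structural explanation of the recursion and a link to the minuscule-type $\g_\theta$-modules; what it costs is that the entire content of part~(3) is concentrated in the deferred case-by-case verification that the weights of $\g_1$ evaluate in $\{-3/2,-1/2,1/2,3/2\}$ on $x_{\gK(\g_\theta)}$ — five exceptional module computations plus the orthogonal series — which you only sketch. That step would go through, but until it is written out your argument is a programme rather than a proof, whereas the paper's quadratic constraint disposes of all these cases at once with a two-line Diophantine analysis. A last small point: to get the exact equality of the value sets (not just the inclusion in $\{-1,0,1,2\}$) you still need to exhibit a root of value $0$; Lemma~\ref{lm:not-dominant} gives $-1$ and $2$, the $\beta_i$ give $1$, and $0$ then follows from the symmetry $m_0=m_1$ of Lemma~\ref{lm:symmetry-value} once one notes $m_1>0$ (e.g.\ $\theta-\beta_2$ works), so this is harmless but worth a sentence.
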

\begin{proof}
{\bf 1$^o$}. Using explicit formulae for $\gK$, all these assertions can be verified case-by-case. For 
instance, data of Example~\ref{ex:sl-sp} provide a proof for (1) and (2).  %\\ \indent 
However, this does not explain the general constraints $-1\le \gamma(x_\gK)\le 2$. Below we provide 
a more conceptual argument, which also uncovers some additional properties of $x_\gK$.

{\bf 2$^o$}. If $m=\rk\g=n$, then $\gK$ is a basis for $\te^*$. Hence $\gamma$ can be written as 
$\gamma=\sum_{i=1}^n k_i\beta_i$, where $k_i=\frac{1}{2}(\gamma,\beta_i^\vee)\in \frac{1}{2}\BZ$, and  
$\gamma(x_\gK)=\sum_{i=1}^n k_i$. We are to prove that $-1\le \sum_{i=1}^n k_i\le 2$. 

If $\gamma=\beta_i$, then $\gamma(x_\gK)=1$. Therefore we assume below that $\gamma\not\in\gK$.
If $\gamma\in \Delta(\h_i)$ for some $i\ge 1$, then the whole argument can be performed for the simple
Lie subalgebra $\g\lg i\rg\subset \g$ and the cascade $\gK(\g\lg i\rg)\subset \gK$, which has the unique 
maximal  element $\beta_i$. Since $\rk\g\lg i\rg< \rk\g$ for $i\ge 2$, it suffices to prove the assertion for 
$i=1$ and $\beta_1=\theta$.

Assume that $\gamma\in \eus H_1=\Delta(\h_1)$ and $\gamma\ne\theta$. Then 
$(\gamma,\theta^\vee)=1$,  $\gamma=\frac{1}{2}\theta+\sum_{i=2}^n k_i\beta_i$, and
\beq   \label{eq:star}
   4(\gamma,\gamma)=(\theta,\theta)+\sum_{i\ge 2}(2k_i)^2(\beta_i,\beta_i) .
\eeq
\indent 
{\sf (i)} \ For $\gamma\in\Delta^+_l$, it follows from~\eqref{eq:star} that
$\displaystyle 3=\sum_{i\ge 2} 4k_i^2{\cdot}\frac{(\beta_i,\beta_i)}{(\gamma,\gamma)}$. Since 
$\#(\gK\cap\Delta^+_s)\le 1$ (Lemma~\ref{lm:1-short}), the only possibilities for the nonzero coefficients $k_i$ are:
\begin{itemize}
\item \ $k_2, k_3, k_4=\pm\frac{1}{2}$ \ with \ $\beta_2,\beta_3,\beta_4\in \Delta^+_l$;
\item \ $k_2 =\pm\frac{1}{2}$, $k_3=\pm 1$ \ with \ $\beta_2\in\Delta^+_l$,  $\beta_3\in\Delta^+_s$
and $(\beta_2,\beta_2)/(\beta_3,\beta_3)=2$; \\ {}
[this happens only for $\GR{B}{2p+1}$];
\item \ $k_2 =\pm\frac{3}{2}$ \ with \ $\beta_2\in\Delta^+_s$ and $(\theta,\theta)/(\beta_2,\beta_2)=3$
\quad [this happens only for $\GR{G}{2}$].
\end{itemize}
In all these cases, we have $\gamma(x_\gK)\in \{-1,0,1,2\}$, as required.

{\sf (ii)} \ If $\gamma\in\Delta^+_s$ and $\displaystyle \frac{(\theta,\theta)}{(\gamma,\gamma)}=2$, then \eqref{eq:star} shows that
$\displaystyle  2=\sum_{i\ge 2} 4k_i^2{\cdot}\frac{(\beta_i,\beta_i)}{(\gamma,\gamma)}\ge 
\sum_{i\ge 2} 4k_i^2$. Since $\#(\gK\cap\Delta^+_s)\le 1$, the only possibility here is: 
\begin{itemize}
\item $k_2 =\pm\frac{1}{2}$ \ with \ $\beta_2\in\Delta^+_l$ \quad
[this happens for $\GR{B}{n}$, $\GR{C}{n}$,  and $\GR{F}{4}$].
\end{itemize}
Therefore $\gamma(x_\gK)=\frac{1}{2}+k_2\in \{0,1\}$.

{\sf (iii)} \ If $\gamma\in\Delta^+_s$ and $\displaystyle \frac{(\theta,\theta)}{(\gamma,\gamma)}=3$, then
\eqref{eq:star} shows that 
$\displaystyle  1=\sum_{i\ge 2} 4k_i^2{\cdot}\frac{(\beta_i,\beta_i)}{(\gamma,\gamma)}$. Here the only
possibility is $k_2=\pm \frac{1}{2}$ and $\beta_2$ is short \quad [this happens for $\GR{G}{2}$].

{\bf 3$^o$}. If $m<\rk\g=n$, then $\gK$ is not a basis for $\te^*$. Nevertheless, one can circumvent this
obstacle as follows. Let $\omega_0\in W$ be the longest element. Then $-\omega_0\in GL(\te)$ takes 
$\Delta^+$ to itself and $\beta_i\in \te^{-\omega_0}$ for each $i$, see Prop.~\ref{prop:longest&K}. Moreover, $\gK$ is a basis for 
$\te^{-\omega_0}$, see~\cite[Lemma\,6.2]{p05}. Hence 
$\bar\gamma:=\frac{1}{2}(\gamma-\omega_0(\gamma))=\sum_{i=1}^m k_i\beta_i$, \ $k_i\in \frac{1}{2}\BZ$,  and
\[
   \sum_{i=1}^m k_i=\bar\gamma(x_\gK)=\gamma(x_\gK) .
\]
Therefore, the argument of part {\bf 2$^o$} applies to $\bar\gamma$ in place of $\gamma$. However, a 
new phenomenon may occur here. As above, we begin with $\gamma\in \Delta(\h_1)\setminus \theta$. 
Then $\bar\gamma=\frac{1}{2}\theta +\sum_{i\ge 2}k_i \beta_i$. But in this case, $\bar\gamma$ is not 
necessarily a root and it may happen that $k_i=0$ for $i\ge 2$. Then $\bar\gamma(x_\gK)=1/2$.
(This does occur for $\g$ of type $\GR{A}{2p}$: if $\gamma=\esi_1-\esi_{p+1}$, then
$\omega_0(\gamma)=\esi_{2p+1}-\esi_{p+1}$ and $\bar\gamma=\frac{1}{2}\theta$. Conversely, if 
$\bar\gamma=\frac{1}{2}\theta$, then $\htt(\theta)=2{\cdot}\htt(\gamma)$. Hence the Coxeter number of 
$\g$ is odd, and this happens only for $\GR{A}{2p}$.)

{\bf 4$^o$}. Recall that $\omega_0=-1$ if and only $m=\rk\g$ and then $\bar\gamma=\gamma$. 
Therefore, part~{\bf 2$^o$} can be thought of as a special case of a more general approach outlined in 
{\bf 3$^o$}. 
\end{proof}

\begin{rmk}   \label{rem:discussion}
An analysis of possibilities for $\{k_i\}$ in the proof of Theorem~\ref{thm:spektr-fonin} reveals the following features:
\begin{itemize}
\item[\bf (1)] \ for any $\gamma\in \Delta^+$, $\bar\gamma$ is a linear combination of at most four different elements of $\gK$;
\item[\bf (2)] \ if $\#\gK=\rk\g$ and $\gK\subset \Delta^+_l$, then every $\gamma\in\Delta^+_l\setminus \gK$ is presented as a linear combination of exactly four different elements of $\gK$; 
\item[\bf (3)] \ if $\gamma(x_\gK)=2$ or $\gamma(x_\gK)=-1$, then $\gamma\in \Delta^+_l$. 
\end{itemize}
\end{rmk}

\begin{ex}    \label{ex:so-x_K}
Let $\g=\mathfrak{so}_N$ be realised as the set of skew-symmetric $N\times N$-matrices w.r.t. the antidiagonal. 
\begin{itemize}
\item \ For $N=2n$, one has $\te=\{\mathsf{diag}(x_1,\dots,x_n,-x_n,\dots,-x_1)\mid x_i\in \BC\}$. 
%and $\Delta^+=\{\esi_i \pm\esi_j\mid 1\le i< j\le n\}$. 
\begin{itemize}
\item If $n=2k$, then $\#\gK=2k$ and the entries of $x_\gK$ are $x_{2i-1}=1$
and $x_{2i}=0$ for $i=1,\dots, k$.
\item If $n=2k+1$, then still $\#\gK=2k$, the entries $x_j$ with $j\le 2k$ are the same as for 
$\mathfrak{so}_{4k}$, and $x_{2k+1}=0$. 
\end{itemize}
\item For $N=2n+1$, one has $\te=\{\mathsf{diag}(x_1,\dots,x_n,0,-x_n,\dots,-x_1)\mid x_i\in \BC\}$ 
%and $\Delta^+=\{\esi_i \pm\esi_j\mid 1\le i< j\le n\}\cup \{\esi_1,\dots,\esi_n\}$.  
and $\#\gK=n$. Here the entries of $x_\gK$ are $x_{2i-1}=1$ for $i\le [(n+1)/2]$ and $x_{2i}=0$ for $i\le [n/2]$.
\end{itemize}
\end{ex}
\noindent
Using Examples~\ref{ex:sl-sp} and \ref{ex:so-x_K}, one readily computes the numbers
$\{\ap(x_\gK)\}$ with $\ap\in\Pi$ for all classical Lie algebras. For the exceptional Lie algebras one
can use explicit formulae for $\gK$, see Appendix~\ref{sect:tables}. An alternative approach is to use the
recursive construction of $\gK$. One has $\Pi=\bigsqcup_{i=1}^m \Phi(\beta_i)$, and it suffices to 
describe the numbers $\ap(x_\gK)$ for  any simple Lie algebra and $\ap\in\Phi(\beta_1)=\Phi(\theta)$.
\begin{enumerate}
\item If $\theta$ is fundamental, then $\Phi(\theta)=\{\tap\}\subset\Pi_l$ and $\tap(x_\gK)=-1$
(Lemma~\ref{lm:not-dominant});
\item if $\g=\spn$, then $\theta=2\varpi_1$ and $\Phi(\theta)=\{\ap_1\}\subset\Pi_s$. Here
$\theta=2\ap_1+\beta_2$ and $\ap_1(x_\gK)=0$;
\item For $\slno$ ($n\ge 2$), we have $\Phi(\theta)=\{\ap_1,\ap_n\}$. If $n\ge 3$, then
$\ap_1(x_\gK)=\ap_n(x_\gK)=0$; if $n=2$, then $\theta=\ap_1+\ap_2$ and 
$\ap_1(x_\gK)=\ap_2(x_\gK)=1/2$;
\item For $\tri$, one has $\theta=\ap_1$ and $\ap_1(x_\gK)=1$.
\end{enumerate}
The resulting labelled diagrams are presented in Figures~\ref{fig:class} and~\ref{fig:except}.

%%%%%%%%%%%%%%   Figura 1 
\begin{center}
\begin{figure}[ht]   
\begin{tabular}{rcr}
$\GR{A}{2p}$\ :  & & \rule{0pt}{3ex} 
\raisebox{-1.25ex}{\begin{tikzpicture}[scale=0.75, transform shape]  %% A_2m %%
\draw (0,0.2) node[above] {\small $0$} 
        (2,0.2) node[above] {\small $0$}
        (3,0.2) node[above] {\small $1/2$} 
        (4,0.2) node[above] {\small $1/2$}
        (5,0.2) node[above] {\small $0$}
        (7,0.2) node[above] {\small $0$};
\tikzstyle{every node}=[circle, draw, fill=yellow!50]
\node (a) at (0,0) {};
\node (c) at (2,0) {};
\node (d) at (3,0) {};
\node (e)  at (4,0) {};
\node (f) at (5,0) {};
\node (h) at (7,0) {};
\tikzstyle{every node}=[circle]
\node (b) at (1,0) {$\cdots$};
\node (g) at (6,0) {$\cdots$};
\foreach \from/\to in {a/b, b/c, c/d, d/e, e/f, f/g, g/h}  \draw[-] (\from) -- (\to);   
\end{tikzpicture} }
\\
$\GR{A}{2p-1}$\ :  & & \rule{0pt}{3ex} 
\raisebox{-1.25ex}{\begin{tikzpicture}[scale=0.75, transform shape]  %% A_2m-1 %%
\draw (0,0.2) node[above] {\small $0$} 
        (2,0.2) node[above] {\small $0$}
        (3,0.2) node[above] {\small $1$} 
        (4,0.2) node[above] {\small $0$}
        (6,0.2) node[above] {\small $0$};
\tikzstyle{every node}=[circle, draw, fill=yellow!50]
\node (a) at (0,0) {};
\node (c) at (2,0) {};
\node (d) at (3,0) {};
\node (e)  at (4,0) {};
\node (g) at (6,0) {};
\tikzstyle{every node}=[circle]
\node (b) at (1,0) {$\cdots$};
\node (f) at (5,0) {$\cdots$};
\foreach \from/\to in {a/b, b/c, c/d, d/e, e/f, f/g}  \draw[-] (\from) -- (\to);   
\end{tikzpicture} }
\\
$\GR{C}{p}$\ :  & & \rule{0pt}{3.5ex} 
\raisebox{-1.25ex}{\begin{tikzpicture}[scale=0.75, transform shape]  %% C_m %%
\draw (0,0.2) node[above] {\small $0$} 
        (1,0.2) node[above] {\small $0$}
        (2,0.2) node[above] {\small $0$} 
        (5,0.2) node[above] {\small $0$}
        (6,0.2) node[above] {\small $1$};
\tikzstyle{every node}=[circle, draw, fill=yellow!50]
\node (a) at (0,0) {};
\node (b) at (1,0) {};
\node (c) at (2,0) {};
\node (f)  at (5,0) {};
\node (g) at (6,0) {};
\tikzstyle{every node}=[circle]
\node (d) at (3,0) {};
\node (x) at (3.5,0) {$\cdots$};
\node (e) at (4,0) {};
\foreach \from/\to in {a/b, b/c, c/d, e/f}  \draw[-] (\from) -- (\to);   
\node (r) at (5.35,.01) {\large $<$};
\draw (5.3, .06) -- +(.48,0);
\draw (5.3, -.06) -- +(.48,0);
\end{tikzpicture} }
\\
$\GR{B}{2p-1}$\ :  & & \rule{0pt}{4ex} 
\raisebox{-1.3ex}{\begin{tikzpicture}[scale=0.75, transform shape]  %% B_2m-1 %%
\draw (0,0.2) node[above] {\small $1$} 
        (.85,0.15) node[above] {\small $-1$}
        (2,0.2) node[above] {\small $1$} 
        (4.85,0.15) node[above] {\small $-1$}
        (6,0.2) node[above] {\small $1$};
\tikzstyle{every node}=[circle, draw, fill=yellow!50]
\node (a) at (0,0) {};
\node (b) at (1,0) {};
\node (c) at (2,0) {};
\node (f)  at (5,0) {};
\node (g) at (6,0) {};
\tikzstyle{every node}=[circle]
\node (d) at (3,0) {};
\node (x) at (3.5,0) {$\cdots$};
\node (e) at (4,0) {};
\foreach \from/\to in {a/b, b/c, c/d, e/f}  \draw[-] (\from) -- (\to);   
\node (r) at (5.65,.01) {\large $>$};
\draw (5.2, .06) -- +(.48,0);
\draw (5.2, -.06) -- +(.48,0);
\end{tikzpicture} }
\\
$\GR{B}{2p}$\ :  &  &  \rule{0pt}{4ex} 
\raisebox{-1.3ex}{\begin{tikzpicture}[scale=0.75, transform shape]  %% B_2m %%
\draw (0,0.2) node[above] {\small $1$} 
        (.85,0.15) node[above] {\small $-1$}
        (2,0.2) node[above] {\small $1$} 
        (4.85,0.15) node[above] {\small $-1$}
        (6,0.2) node[above] {\small $1$}
        (7,0.2) node[above] {\small $0$};
\tikzstyle{every node}=[circle, draw, fill=yellow!50]
\node (a) at (0,0) {};
\node (b) at (1,0) {};
\node (c) at (2,0) {};
\node (f)  at (5,0) {};
\node (g) at (6,0) {};
\node (h) at (7,0) {};
\tikzstyle{every node}=[circle]
\node (d) at (3,0) {};
\node (x) at (3.5,0) {$\cdots$};
\node (e) at (4,0) {};
\foreach \from/\to in {a/b, b/c, c/d, e/f, f/g}  \draw[-] (\from) -- (\to);   
\node (r) at (6.65,.01) {\large $>$};
\draw (6.2, .06) -- +(.48,0);
\draw (6.2, -.06) -- +(.48,0);
\end{tikzpicture} }
\\
$\GR{D}{2p}$\ : &  & \rule{0pt}{5ex} 
\raisebox{-2.2ex}{\begin{tikzpicture}[scale=0.75, transform shape]  %% D_2m %%
\draw (0,0.2) node[above] {\small $1$} 
        (.85,0.2) node[above] {\small $-1$}
        (2,0.2) node[above] {\small $1$} 
        (5.9,0.2) node[above] {\small $-1$}
        (7.2, -.4) node[right] {\small $1$} 
        (7.2, .6) node[right]  {\small $1$} ;
\tikzstyle{every node}=[circle, draw, fill=yellow!50]
\node (a) at (0,0) {};
\node (b) at (1,0) {};
\node (c) at (2,0) {};
\node (g) at (6,0) {};
\node (h) at (7,.5) {};
\node (i)  at (7,-.5){};
\tikzstyle{every node}=[circle]
\node (d) at (3,0) {};
\node (e) at (4,0) {$\cdots$};
\node (f)  at (5,0) {};
\foreach \from/\to in {a/b, b/c, c/d, f/g, g/h, g/i}  \draw[-] (\from) -- (\to);        
\end{tikzpicture} }
\\ 
$\GR{D}{2p+1}$\ : &  & \rule{0pt}{5ex} 
\raisebox{-2.5ex}{\begin{tikzpicture}[scale=0.75, transform shape]  %% D_{2m+1} %%
\draw (0,0.2) node[above] {\small $1$} 
        (.85,0.2) node[above] {\small $-1$}
        (2,0.2) node[above] {\small $1$} 
        (4.85,0.2) node[above] {\small $-1$}
        (6,0.2) node[above] {\small $1$}
        (7.2, -.5) node[right] {\small $0$} 
        (7.2, .5) node[right]  {\small $0$} ;
\tikzstyle{every node}=[circle, draw, fill=yellow!50]
\node (a) at (0,0) {};
\node (b) at (1,0) {};
\node (c) at (2,0) {};
\node (f)  at (5,0) {};
\node (g) at (6,0) {};
\node (h) at (7,.5) {};
\node (i)  at (7,-.5){};
\tikzstyle{every node}=[circle]
\node (d) at (3,0) {};
\node (x) at (3.5,0) {$\cdots$};
\node (e) at (4,0) {};
\foreach \from/\to in {a/b, b/c, c/d, e/f, f/g, g/h, g/i}  \draw[-] (\from) -- (\to);        
\end{tikzpicture} }
\end{tabular}
\caption{Numbers $\{\ap(x_{\eus K})\mid \ap\in\Pi\}$ for the classical cases}   \label{fig:class}   
\end{figure}
\end{center}   
%%%%%%%%%%%%%%   Figura 2 
\begin{center}
\begin{figure}[ht] 
\begin{tabular}{rcr}
$\GR{E}{6}$\ : & &   \rule{0pt}{6ex} 
\raisebox{-2.7ex}{\begin{tikzpicture}[scale=0.75, transform shape]  %% E6 %%
\draw (1,0.2) node[above] {\small $0$}
        (2,0.2) node[above] {\small $0$} 
        (3,0.2) node[above] {\small $1$}
        (4,0.2) node[above] {\small $0$} 
        (5,0.2) node[above] {\small $0$}
        (3.2,-1) node[right]  {\small $-1$} ;
\tikzstyle{every node}=[circle, draw, fill=orange!50]
\node (b) at (1,0){};
\node (c) at (2,0) {};
\node (d) at (3,0) {};
\node (e) at (4,0) {};
\node (f) at (5,0) {};
\node (g) at (3,-1) {};
\foreach \from/\to in {b/c, c/d, d/e, e/f, d/g}  \draw[-] (\from) -- (\to);        
\end{tikzpicture} }
\\
$\GR{E}{7}$\ : & & \rule{0pt}{6.5ex} 
\raisebox{-2.7ex}{\begin{tikzpicture}[scale=0.75, transform shape]  %% E7 %%
\draw (0,0.2) node[above] {\small $1$} 
        (.9,0.2) node[above] {\small $-1$}
        (2,0.2) node[above] {\small $1$} 
        (2.9,0.2) node[above] {\small $-1$}
        (4,0.2) node[above] {\small $1$} 
        (4.9,0.2) node[above] {\small $-1$}
        (3.2,-1) node[right]  {\small $1$} ;
\tikzstyle{every node}=[circle, draw, fill=orange!50]
\node (a) at (0,0) {};
\node (b) at (1,0){};
\node (c) at (2,0) {};
\node (d) at (3,0) {};
\node (e) at (4,0) {};
\node (f) at (5,0) {};
\node (g) at (3,-1) {};
\foreach \from/\to in {a/b, b/c, c/d, d/e, e/f, d/g}  \draw[-] (\from) -- (\to);        
\end{tikzpicture} }
\\
$\GR{E}{8}$\ : & & \rule{0pt}{6.5ex} 
\raisebox{-2.7ex}{\begin{tikzpicture}[scale=0.75, transform shape]  %% E8 %%
\draw (-1.1,0.2) node[above] {\small $-1$}
        (0,0.2) node[above] {\small $1$} 
        (.9,0.2) node[above] {\small $-1$}
        (2,0.2) node[above] {\small $1$} 
        (2.9,0.2) node[above] {\small $-1$}
        (4,0.2) node[above] {\small $1$} 
        (4.9,0.2) node[above] {\small $-1$}
        (3.2,-1) node[right]  {\small $1$} ;
\tikzstyle{every node}=[circle, draw, fill=orange!50]
\node (h) at (-1,0) {};
\node (a) at (0,0) {};
\node (b) at (1,0){};
\node (c) at (2,0) {};
\node (d) at (3,0) {};
\node (e) at (4,0) {};
\node (f) at (5,0) {};
\node (g) at (3,-1) {};
\foreach \from/\to in {h/a, a/b, b/c, c/d, d/e, e/f, d/g}  \draw[-] (\from) -- (\to);        
\end{tikzpicture} } 
\\
$\GR{F}{4}$\ : & & \rule{0pt}{5ex} 
\raisebox{-1.4ex}{\begin{tikzpicture}[scale=0.75, transform shape]  %% F4 %%
\draw (0,0.2) node[above] {\small $0$} 
        (1,0.2) node[above] {\small $0$}
        (2,0.2) node[above] {\small $1$} 
        (3,0.2) node[above] {\small $-1$};
\tikzstyle{every node}=[circle, draw, fill=orange!50]
\node (a) at (0,0) {};
\node (b) at (1,0) {};
\node (c) at (2,0) {};
\node (d)  at (3,0) {};
\foreach \from/\to in {a/b, c/d}  \draw[-] (\from) -- (\to);  
\tikzstyle{every node}=[circle]
\node (r) at (1.35,.01) {\large $<$};
\draw (1.31, .06) -- +(.47,0);
\draw (1.31, -.06) -- +(.47,0);
\end{tikzpicture} }
\\
$\GR{G}{2}$\ : & &  \rule{0pt}{5ex} 
\raisebox{-1.4ex}{\begin{tikzpicture}[scale=0.75, transform shape]  %% G2 %%
\draw (0,0.2) node[above] {\small $1$} 
        (1,0.2) node[above] {\small $-1$};
\tikzstyle{every node}=[circle, draw, fill=orange!50]
\node (a) at (0,0) {};
\node (b) at (1,0) {};
\tikzstyle{every node}=[circle]
\node (r) at (0.35,.01) {\large $<$};
\draw (.31, .06) -- +(.47,0);
\draw (.25, 0) -- +(.5,0);
\draw (.31, -.06) -- +(.47,0);
\end{tikzpicture} }
\end{tabular}
\caption{Numbers $\{\ap(x_{\eus K})\mid \ap\in\Pi\}$ for the exceptional cases}   \label{fig:except}   
\end{figure}
\end{center}
\vskip-3ex
Let us summarise main features of the diagrams obtained.
\begin{rmk}    \label{rem:alternate&connected}
1) Fractional values occur only for $\g$ of type $\GR{A}{2p}$. For $\GR{A}{2p-1}$ and $\GR{C}{p}$, the dominant element $x_\gK$ is a multiple of the fundamental weight $\varpi_p$. More precisely,
$x_\gK=\frac{2}{(\ap_p,\ap_p)}\varpi_p$ (cf. Remark~\ref{ex:sl-sp}).

2) All these diagrams have no marks `$2$' and all the marks $\{\ap(x_{\eus K})\}$ are nonzero if and 
only if $\g$ is of type $\GR{B}{2p-1}, \GR{D}{2p}, \GR{E}{7}, \GR{E}{8}, \GR{G}{2}$. The 
subset $\{\ap\in\Pi\mid \ap(x_\gK)=\pm 1\}$ is always connected and the marks `1' and `$-1$' alternate in 
this subset.

3)  If $\ap(x_\gK)=-1$ and $\ap'\in\Pi$ is adjacent to $\ap$, then $\ap'(x_\gK)=1$. Moreover, if
$\ap'(x_\gK)=1$, then $\ap'\in\gK$.

4) The numbers $\{\ap(x_{\eus K})\}$ are compatible with the unfolding procedures
$\GR{C}{p}\mapsto \GR{A}{2p-1}$, $\GR{B}{p-1}\mapsto \GR{D}{p}$, $\GR{F}{4}\mapsto \GR{E}{6}$, 
and $\GR{G}{2}\mapsto \GR{D}{4}$.  For instance, \quad
\raisebox{-1.3ex}{\begin{tikzpicture}[scale=0.75, transform shape]  %% G2 %%
\draw (0,0.2) node[above] {\small $1$} 
        (1,0.2) node[above] {\small $-1$};
\tikzstyle{every node}=[circle, draw, fill=orange!50]
\node (a) at (0,0) {};
\node (b) at (1,0) {};
\tikzstyle{every node}=[circle]
\node (r) at (0.35,.01) {\large $<$};
\draw (.31, .06) -- +(.47,0);
\draw (.25, 0) -- +(.5,0);
\draw (.31, -.06) -- +(.47,0);
\end{tikzpicture} } \ $\mapsto$  \ 
\raisebox{-2.5ex}{\begin{tikzpicture}[scale=0.75, transform shape]  %% D_4 %%
\draw (5,0.2) node[above] {\small $1$} 
        (5.9,0.2) node[above] {\small $-1$}
        (7.2, -.4) node[right] {\small $1$} 
        (7.2, .6) node[right]  {\small $1$} ;
\tikzstyle{every node}=[circle, draw, fill=yellow!50]
\node (f)  at (5,0) {};
\node (g) at (6,0) {};
\node (h) at (7,.5) {};
\node (i)  at (7,-.5){};
%\tikzstyle{every node}=[circle]
\foreach \from/\to in {f/g, g/h, g/i}  \draw[-] (\from) -- (\to);        
\end{tikzpicture} }.
\end{rmk}

%%%%%%%%%%   
\section{The cascade element and self-dual representations of $\g$}
\label{sect:self-dual}

Consider the standard lattices in $\te^*_\BQ$ associated with $\Delta$~\cite[Chap.\,4,\,\S2.8]{VO}:
\begin{itemize}
\item \ $\mathcal Q=\bigoplus_{i=1}^n \BZ\ap_i$ -- the root lattice;
\item \ $\mathcal Q^\vee=\bigoplus_{i=1}^n \BZ\ap_i^\vee$ -- the coroot lattice;
\item \ $\mathcal P=\bigoplus_{i=1}^n \BZ\varpi_i$ -- the weight lattice;
\item \ $\mathcal P^\vee=\bigoplus_{i=1}^n \BZ\varpi_i^\vee$ -- the coweight lattice, where 
$\varpi_i^\vee=2\varpi_i/(\ap_i,\ap_i)$.
\end{itemize}
Then $\mathcal P\supset\mathcal Q$, $\mathcal P^\vee\supset\mathcal Q^\vee$,
$\mathcal P=(\mathcal Q^\vee)^*$, and $\mathcal P^\vee=\mathcal Q^*$, where $\mathcal L^*$ stands for the dual lattice of $\mathcal L$. For instance,
$\mathcal P^\vee=\mathcal Q^*=\{\nu\in \te_\BQ\mid (\nu,\gamma)\in \BZ \ \ \forall \gamma\in\Delta\}$.

If $\g$ is not of type $\GR{A}{2p}$, then $x_\gK\in \mathcal P^\vee$ (Theorem~\ref{thm:spektr-fonin}). 
However, then $x_\gK$ does not always belong to $\mathcal Q^\vee$, and we characterise below the relevant cases.
If $\eus M\subset \te_\BQ$ is finite, 
then $|\eus M|:=\sum_{m\in\eus M} m$. As usual, set
$2\varrho=\sum_{\gamma\in \Delta^+}\gamma=|\Delta^+|$ and 
$2\varrho^\vee=\sum_{\gamma\in \Delta^+}\gamma^\vee=|(\Delta^\vee)^+|$.
Then $\mathsf h(\g)=(\varrho^\vee, \theta)+1$ and the {\it dual Coxeter number\/} of $\g$ is
$\mathsf h^*=\mathsf h^*(\g):=(\varrho, \theta^\vee)+1$.

\begin{lm}    \label{lm:rho-vee}  
One has \ $2\varrho=\sum_{i=1}^m \bigl(\mathsf h^*(\g\lg j\rg)-1\bigr)\beta_j$ \ and \ 
$2\varrho^\vee=\sum_{i=1}^m \bigl(\mathsf h(\g\lg j\rg)-1\bigr)\beta_j^\vee$.
\end{lm}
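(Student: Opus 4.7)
The plan is to reduce the lemma, via the cascade decomposition~\eqref{eq:decomp-Delta}, to a single-Heisenberg identity: for any simple Lie algebra $\g'$ with highest root $\theta'$ and Heisenberg subset $\eus H_{\theta'}$, one has
\[
\sum_{\gamma\in\eus H_{\theta'}}\gamma=\bigl(\mathsf h^*(\g')-1\bigr)\theta' \quad\text{and}\quad \sum_{\gamma\in\eus H_{\theta'}}\gamma^\vee=\bigl(\mathsf h(\g')-1\bigr)(\theta')^\vee.
\]
Granted these, applying each identity inside $\g\lg j\rg$ (where $\beta_j$ plays the role of $\theta'$) and summing over $j=1,\dots,m$ via $\Delta^+=\bigsqcup_{j=1}^m \eus H_{\beta_j}$ delivers the two formulas for $2\varrho$ and $2\varrho^\vee$ at once.

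For the single-Heisenberg identity I would use the involution $\iota\colon\gamma\mapsto\theta'-\gamma$ on $\eus H_{\theta'}\setminus\{\theta'\}$. Because the only possible values of $(\gamma,(\theta')^\vee)$ on positive roots $\gamma$ of $\g'$ are $0,1,2$, with the value $2$ attained only at $\theta'$, every $\gamma\in\eus H_{\theta'}\setminus\{\theta'\}$ satisfies $(\gamma,(\theta')^\vee)=1$. Hence $r_{\theta'}(\gamma)=\gamma-\theta'=-\iota(\gamma)$, which forces $\theta'-\gamma$ back into $\eus H_{\theta'}$, makes $\iota$ a well-defined involution, and (since $r_{\theta'}$ preserves length) yields $(\gamma,\gamma)=(\theta'-\gamma,\theta'-\gamma)$. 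Each $\iota$-pair therefore contributes
\[
\gamma+(\theta'-\gamma)=\theta', \qquad \gamma^\vee+(\theta'-\gamma)^\vee=\frac{(\theta',\theta')}{(\gamma,\gamma)}(\theta')^\vee,
\]
and summing over the pairs together with the contribution from $\theta'$ (resp.\ $(\theta')^\vee$) shows that $\sum_{\gamma\in\eus H_{\theta'}}\gamma$ is a scalar multiple of $\theta'$ and $\sum_{\gamma\in\eus H_{\theta'}}\gamma^\vee$ is a scalar multiple of $(\theta')^\vee$.

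The two scalars are then pinned down by pairing these sums with $(\theta')^\vee$ and $\theta'$ respectively and invoking the standard identities $(2\varrho(\g'),(\theta')^\vee)=2(\mathsf h^*(\g')-1)$ and $(2\varrho^\vee(\g'),\theta')=2(\mathsf h(\g')-1)$. Since the roots of $\g'$ orthogonal to $\theta'$ contribute zero to these pairings, the restricted sums over $\eus H_{\theta'}$ produce exactly the values $\mathsf h^*(\g')-1$ and $\mathsf h(\g')-1$. The one mildly delicate point is the coroot case in mixed-length types, where both short and long roots may occur in $\eus H_{\theta'}$: that their coroot sum is nevertheless aligned along $(\theta')^\vee$ is precisely what the length-preserving $r_{\theta'}$-symmetry guarantees, so once that step is secured the rest of the argument is formal.
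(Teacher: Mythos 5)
Your proof is correct and follows essentially the same route as the paper: reduce via $\Delta^+=\bigsqcup_j\eus H_{\beta_j}$ to the single Heisenberg subset, use the length-preserving pairing $\gamma\leftrightarrow\theta'-\gamma$ to see that the (co)root sum over $\eus H_{\theta'}$ is a multiple of $\theta'$ (resp.\ $(\theta')^\vee$), and pin down the scalar by pairing with $(\theta')^\vee$ (resp.\ $\theta'$) and the defining relations $\mathsf h=(\varrho^\vee,\theta)+1$, $\mathsf h^*=(\varrho,\theta^\vee)+1$. Your treatment of the mixed-length case via $r_{\theta'}$ is exactly the paper's observation that $\gamma$ and $\theta-\gamma$ have the same length.
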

\begin{proof}
Since $\Delta^+=\bigsqcup_{i=1}^m \gH_{\beta_i}$ and $\beta_1=\theta$,  it is sufficient to prove that 
$|\gH_\theta^\vee|=\bigl(\mathsf h(\g)-1\bigr)\theta^\vee$ and 
$|\gH_\theta|=\bigl(\mathsf h^*(\g)-1\bigr)\theta$. 
Since $\gH_\theta\setminus \{\theta\}$ is the union of pairs $\{\gamma, \theta-\gamma\}$, where the roots
$\gamma$ and $\theta-\gamma$ have the same length, it is clear 
that $|\gH_\theta^\vee|= a\theta^\vee$ for some $a\in\BN$. Then 
\[
   a=(\textstyle \frac{1}{2}|\gH_\theta^\vee|, \theta)=(\varrho^\vee, \theta)=\mathsf h(\g)-1. 
\]
The proof of the second relation is similar.
\end{proof}

\begin{prop}    \label{prop:self-dual}
The following conditions are equivalent:
\begin{enumerate}
\item $x_\gK\in \mathcal Q^\vee$;
\item every self-dual representation of $\g$ is orthogonal.
\end{enumerate}
\end{prop}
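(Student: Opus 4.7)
The plan is to relate $x_\gK$ to $\varrho^\vee$ modulo $\cQ^\vee$ and then invoke the classical Frobenius--Schur criterion. Combining Lemma~\ref{lm:rho-vee} with the identity $2x_\gK=\sum_j\beta_j^\vee$ gives
\[
2\varrho^\vee - 2x_\gK \;=\; \sum_{j=1}^m \bigl(\mathsf h(\g\langle j\rangle) - 2\bigr)\,\beta_j^\vee .
\]
I would then argue that, whenever $\g$ is not of type $\GR{A}{2p}$, each Coxeter number $\mathsf h(\g\langle j\rangle)$ appearing on the right is even: this is verifiable by induction on the depth of the cascade, since the irreducible components $\g\langle j\rangle$ are of types $\GR{A}{2k-1}$, $\GR{B}{k}$, $\GR{C}{k}$, $\GR{D}{k}$, or one of the small exceptional ones, all of which have even $\mathsf h$. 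Consequently each coefficient $\mathsf h(\g\langle j\rangle)-2$ is even, the right-hand side lies in $2\cQ^\vee$, and we obtain the key congruence $\varrho^\vee \equiv x_\gK \pmod{\cQ^\vee}$.

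Next I would apply the Frobenius--Schur criterion for simple Lie algebras: a self-dual irreducible $\g$-module $V_\lambda$ is orthogonal if and only if $(\lambda,2\varrho^\vee)\in 2\BZ$, equivalently $\lambda(\varrho^\vee)\in\BZ$. By the congruence this is equivalent to $\lambda(x_\gK)\in\BZ$ for every self-dual $\lambda\in\cP$. The implication (1)$\Rightarrow$(2) is now immediate: if $x_\gK\in\cQ^\vee=\cP^*$, then $\lambda(x_\gK)\in\BZ$ for \emph{every} $\lambda\in\cP$, in particular for the self-dual ones.

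For the reverse implication I would consider the central element $\sigma_\gK:=\exp(2\pi i\,x_\gK)\in Z(G_{\mathrm{sc}})$. Since $2x_\gK\in\cQ^\vee$, we have $\sigma_\gK^2=1$, and $x_\gK\in\cQ^\vee$ if and only if $\sigma_\gK=1$. The congruence, together with the Frobenius--Schur criterion, shows that $\sigma_\gK$ acts on each self-dual irrep $V_\lambda$ by the scalar $(-1)^{(\lambda,2\varrho^\vee)}$, so condition~(2) is precisely the statement that $\sigma_\gK$ acts trivially on every self-dual representation. The main obstacle is then to pass from triviality of $\sigma_\gK$ on the self-dual part of the representation ring to full triviality of $\sigma_\gK$; I would resolve this by a case-by-case inspection of the explicit values of $x_\gK$ recorded in Figures~\ref{fig:class} and~\ref{fig:except}, matching the resulting list of simple types where $x_\gK\in\cQ^\vee$ against the classical list of $\g$ admitting no symplectic irreducible representation.
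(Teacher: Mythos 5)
Your derivation of the key congruence $\varrho^\vee\equiv x_\gK \pmod{\cQ^\vee}$ is exactly the paper's argument: Lemma~\ref{lm:rho-vee} combined with the observation that all the Coxeter numbers $\mathsf h(\g\lg j\rg)$ are even when $\g$ is not of type $\GR{A}{2p}$, and your implication (1)$\Rightarrow$(2) then coincides with the paper's (integrality of $\lb(x_\gK)$ for all $\lb\in\cP$ plus the Frobenius--Schur criterion). Where you diverge is (2)$\Rightarrow$(1). The paper disposes of this in one line by asserting that every self-dual representation is orthogonal if and only if $\varrho^\vee\in\cQ^\vee$; you have correctly sensed that the ``only if'' half of that assertion is not automatic, because when $\omega_0\neq-1$ the self-dual dominant weights span only a proper sublattice of $\cP$, so integrality of $\lb(\varrho^\vee)$ on that sublattice need not force $\varrho^\vee\in\cQ^\vee=\cP^*$. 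Isolating this as the ``main obstacle'' is the right diagnosis.

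The problem is that the resolution you propose --- matching the list of types with $x_\gK\in\cQ^\vee$ against the list of types admitting no symplectic irreducible --- cannot close the gap, because the two lists do not coincide. Take $\g=\mathfrak{sl}_4$ (type $\GR{A}{3}$, so not excluded). Every self-dual irreducible has highest weight $\lb=a\varpi_1+b\varpi_2+a\varpi_3$, and $\langle\lb,2\varrho^\vee\rangle=3a+4b+3a=6a+4b$ is always even, so condition (2) holds; yet $x_\gK=\varpi_2^\vee=\mathsf{diag}(\tfrac12,\tfrac12,-\tfrac12,-\tfrac12)\notin\cQ^\vee$. In your own language, $\sigma_\gK=\exp(2\pi i\,x_\gK)=-I$ is a nontrivial central element of $SL_4$ that nevertheless acts trivially on every self-dual irreducible. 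The same phenomenon occurs for $\GR{A}{4k+3}$ in general and for $\GR{D}{4k+3}$ (e.g.\ $\mathfrak{so}_{14}$, whose half-spin representations are swapped by $-\omega_0$ and hence are not self-dual). So the implication (2)$\Rightarrow$(1) genuinely fails for these types with $\omega_0\neq -1$, and no amount of case checking can rescue the proposed route; the argument (yours and the paper's) is complete only for the types with $\omega_0=-1$, where $\cP^{-\omega_0}=\cP$, or if condition (2) is replaced by ``$\varrho^\vee\in\cQ^\vee$''. In short: the first half of your proof matches the paper, but the deferred case check in the second half is not a fillable gap --- it is the step that would fail, and it exposes the same soft spot in the paper's one-line justification.
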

\begin{proof}
By a classical result of Dynkin, if $\pi_\lb: \g\to \BV_\lb$ is an irreducible representation with highest 
weight $\lb$, then $\pi_\lb$ is self-dual if and only if $\omega_0(\lb)=-\lb$. Then $\pi_\lb$ is orthogonal 
if and only if $(\varrho^\vee, \lb)\in \BN$~\cite{dy50}, cf. also~\cite[Exercises\,4.2.12--13]{VO} or
\cite[Chap.\,3, \S\,2.7]{t41}. Hence every self-dual 
representation of $\g$ is orthogonal if and only if $\varrho^\vee\in\mathcal Q^\vee$.

Therefore, it suffices to prove that $\varrho^\vee-x_\gK\in\mathcal Q^\vee$, if $\g$ is not of type 
$\GR{A}{2p}$. By Lemma~\ref{lm:rho-vee}, we have
\[
   \varrho^\vee-x_\gK=\sum_{i=1}^m \frac{\mathsf h(\g\lg j\rg)-2}{2}{\cdot}\beta_j^\vee .
\]
It remains to observe that, for any simple $\g$, the Coxeter numbers $\mathsf h(\g\lg j\rg)$, 
$j=1,\dots,m$, have the same parity, and if $\g$ is not of type $\GR{A}{2p}$, then all these numbers 
are even.
\end{proof}

Using Proposition~\ref{prop:self-dual} and~\cite[Table\,3]{VO}, one readily obtains that
\\[.6ex]
\centerline{
$x_\gK\in \mathcal Q^\vee \ \Longleftrightarrow \ \g\in\{\GR{B}{4p-1}, \GR{B}{4p}, \GR{D}{4p}, 
\GR{D}{4p+1}, \GR{E}{6}, \GR{E}{8}, \GR{F}{4}, \GR{G}{2}\}$ .
}
\\[.6ex]
Although the coefficients $[\varrho^\vee:\beta_i^\vee]$ are usually not integral, this does {\bf not} 
necessarily mean that $\varrho^\vee\not\in \mathcal Q^\vee$. For, the elements 
$\beta_1^\vee,\dots,\beta_m^\vee$ do not form a (part of a) basis for $\mathcal Q^\vee$.

%%%%%%%%%%%%  Section 4  %%%%%%%%
\section{The cascade element as the Ooms element of a Frobenius Lie algebra}
\label{sect:frob}

\noindent
Given an arbitrary Lie algebra $\q$, one associates the {\it Kirillov form\/} $\gB_\eta$ on $\q$ to any 
$\eta\in\q^*$. By definition, if $\lg\ ,\, \rg: \q^*\times\q\to \BC$ is the natural pairing and
$x,y\in\q$, then
\[
    \gB_\eta(x,y)=\lg \eta, [x,y]\rg=-\lg \ads(x)\eta,y\rg .
\]
Then $\gB_\eta$ is skew-symmetric and $\Ker \gB_\eta=\q^\eta$, the stabiliser of $\eta$ in $\q$. The 
{\it index\/} of $\q$ is $\ind\q=\min_{\eta\in\g^*}\dim\q^\eta$, and 
$\q^*_{\sf reg}=\{\xi\in \q^*\mid \dim\q^\xi=\ind\q\}$ is the set of {\it regular elements\/} of $\q^*$ .
%is $\q^*_{\sf reg}=\{\eta\in \q^*\mid \dim\q^\eta \ \text{ is minimal}\}$. 
%If $\xi\in\q^*_{\sf reg}$, then $\ind\q:=\dim\q^\xi$ is called the {\it index\/} of $\q$.
Suppose that $\q$ is {\it Frobenius\/}, i.e., there is $\xi\in\q^*$ such that $\gB_\xi$ is non-degenerate. 
Then $\q^\xi=\{0\}$ and $\xi\in\q^*_{\sf reg}$. The $2$-form $\gB_\xi$ yields a linear isomorphism 
between $\q$ and $\q^*$. Let 
$x_{\q,\xi}=x_\xi\in\q$ correspond to $\xi$ under that isomorphism. It is noticed by A.\,Ooms~\cite{ooms} 
that $\ad x_\xi$ enjoys rather interesting properties. Namely, using the non-degenerate form $\gB_\xi$, one 
defines the adjoint operator $(\ad x_\xi)^*: \q\to \q$. By~\cite[Theorem\,3.3]{ooms}, one has
\beq        \label{eq:ooms}
       (\ad x_\xi)^*=1- \ad x_\xi .
\eeq
Therefore, if $\lb$ is an eigenvalue of $\ad x_\xi$ with multiplicity $m_\lb$, then $1-\lb$ is also an 
eigenvalue and $m_\lb=m_{1-\lb}$. Hence $\tr_\q(\ad x_\xi)=(\dim\q)/2$.
We say that $x_\xi$ is the {\it Ooms element\/} associated with 
$\xi\in\q^*_{\sf reg}$. 
Another way to define $x_\xi$ is as follows. Since $\q^\xi=\{0\}$, we have $\q{\cdot}\xi=\q^*$. Then 
$x_\xi\in\q$ is the unique element such that $(\ads x_\xi){\cdot}\xi=-\xi$. 

If $\q=\Lie(Q)$ is algebraic, then each element of $\q$ has the Jordan 
decomposition~\cite[Ch.\,3. \S\,3.7]{VO}. Furthermore, if $\q$ is Frobenius and algebraic, then
$\q^*_{\sf reg}$ is the dense $Q$-orbit in $\q^*_{\sf reg}$. Therefore, all Ooms elements in $\q$ are 
$Q$-conjugate, and we can also write $x_\xi=x_\q$ for an Ooms element in $\q$. 
\begin{lm}   
\label{lm:ooms-ss}
If\/ $\q$ is Frobenius and algebraic, then any Ooms element $x_\xi$ is semisimple.
\end{lm}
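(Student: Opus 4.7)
The plan is to exploit the Jordan decomposition in the algebraic Lie algebra $\q$ together with the defining property of the Ooms element on the dual side. Since $\q$ is algebraic, any $x\in\q$ admits a Jordan decomposition $x=x_s+x_n$ with $x_s,x_n\in\q$, $[x_s,x_n]=0$, $\ad x_s$ (hence $\ads x_s$) semisimple, and $\ad x_n$ (hence $\ads x_n$) nilpotent. Apply this to $x_\xi$ to obtain $x_\xi=x_s+x_n$. What I want to show is that $x_n=0$.

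The key observation is that on the dual space $\q^*$, the element $\xi$ itself is an \emph{honest} (not merely generalised) eigenvector of $\ads x_\xi$. Indeed, the defining property of the Ooms element, equivalent to $\bi_\xi(\xi)=x_\xi$, can be rewritten as
\[
    (\ads x_\xi)\xi=-\xi,
\]
because for every $y\in\q$ one has $\lg(\ads x_\xi)\xi,y\rg=-\gB_\xi(x_\xi,y)=-\lg\xi,y\rg$. Thus $\xi$ is a genuine $(-1)$-eigenvector of the operator $\ads x_\xi=\ads x_s+\ads x_n$ acting on the finite-dimensional space $\q^*$.

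Now I invoke the following elementary fact: if $A,N\in\mathsf{End}(V)$ satisfy $[A,N]=0$ with $A$ semisimple and $N$ nilpotent, and if $v\in V$ is an eigenvector of $A+N$ with eigenvalue $\lb$, then $Av=\lb v$ and $Nv=0$. Indeed, decompose $V=\bigoplus_\mu V_\mu$ into eigenspaces of $A$; since $[A,N]=0$, $N$ preserves each $V_\mu$, and writing $v=\sum v_\mu$ the eigenvalue equation yields $(N-(\lb-\mu))v_\mu=0$ on $V_\mu$. For $\mu\ne\lb$ the operator $N-(\lb-\mu)$ is invertible (since $N$ is nilpotent), forcing $v_\mu=0$. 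Applied to $A=\ads x_s$, $N=\ads x_n$, $v=\xi$, $\lb=-1$, this gives $(\ads x_n)\xi=0$.

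The final step is to read off the conclusion: $(\ads x_n)\xi=0$ means that $\xi([x_n,\cdot])\equiv 0$, i.e.\ $x_n\in\Ker\gB_\xi=\q^\xi$. But $\q$ is Frobenius, so $\q^\xi=\{0\}$ and therefore $x_n=0$. Hence $x_\xi=x_s$ is semisimple. The only delicate point is the commuting-Jordan-decomposition lemma above; everything else is a bookkeeping exercise in algebraicity of $\q$ and the definition of $x_\xi$.
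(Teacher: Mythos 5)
Your proof is correct and follows essentially the same route as the paper: Jordan decomposition in the algebraic Lie algebra $\q$, the defining relation $(\ads x_\xi)\xi=-\xi$, and the fact that $\q^\xi=\{0\}$ forces the nilpotent part to vanish. The only difference is cosmetic — the paper phrases the last step as ``uniqueness of the Ooms element'' where you show directly that $(\ads x_n)\xi=0$ puts $x_n$ in $\Ker\gB_\xi$, and you spell out the commuting-Jordan-eigenvector lemma that the paper dismisses with ``obviously implies.''
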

\begin{proof}
For the Jordan decomposition $x_\xi=(x_\xi)_s+(x_\xi)_n$, the defining relation 
$(\ads x_\xi){\cdot}\xi=-\xi$ obviously implies that $\ads ((x_\xi)_s){\cdot}\xi=-\xi$. Then the uniqueness of 
the Ooms element associated with $\xi$ shows that $x_\xi=(x_\xi)_s$ is semisimple. 
\end{proof}
Let $\spec (x_\xi)$ denote the multiset of eigenvalues of $\ad x_\xi$ in $\q$. In other 
words, if $\lb$ is an eigenvalue and $\q(\lb)$ is the corresponding eigenspace, then 
$\lb\in \spec (x_\xi)$ is taken with multiplicity $\dim\q(\lb)$. Then it follows from \eqref{eq:ooms} 
that $\spec (x_\xi)$ is symmetric w.r.t. $1/2$.

In~\cite{p22}, we defined the {\it Frobenius envelope\/} of the nilradical $\p^{\sf nil}$ of any standard 
parabolic subalgebra $\p\subset \g$. For $\ut=\be^{\sf nil}$, this goes as follows. Set 
$\te_\gK=\lg \beta_1,\dots,\beta_m\rg_\BC=\lg\gK\rg_\BC\subset\te$. In more direct terms, 
$\te_\gK=\bigoplus_{i=1}^m [e_{\beta_i}, e_{-\beta_i}]$. Then $\te_\gK$ is an algebraic subalgebra of 
$\te$, and the Frobenius envelope of $\ut$ is $\be_\gK=\te_\gK\oplus \ut$, which is 
%an algebraic Lie algebra and 
an ideal of $\be$. Note that $\be_\gK=\be$ if and only if $\#\gK=\rk\g$. By~\cite[Proposition\,5.1]{p22}, 
we have $\ind\be_\gK=0$, i.e., $\be_\gK$ is Frobenius. Here $\be^*_\gK\simeq \g/\be_\gK^\perp$ can 
be identified with $\be^-_\gK=\te_\gK\oplus\ut^-$ as vector space and $\te$-module. Furthermore, under this identification, we have 
\[
    \xi_\gK=\sum_{\beta\in\gK}e_{-\beta}\in (\be_\gK)^*_{\sf reg}.
\]
Therefore, $(\ads x_\gK)\xi_\gK=-\xi_\gK$, i.e., 
the  Ooms element $x_{\xi_\gK}$ associated with $\xi_\gK$ is nothing but the cascade 
element $x_\gK$ from Section~\ref{sect:comb-prop}. Hence $\spec (x_\gK)$
is symmetric w.r.t. $1/2$. Note that since $\te_\gK\subset \be_\gK(0)$, 
$\bigoplus_{\beta\in\gK}\g_\beta\subset  \be_\gK(1)$, and 
$\dim\te_\gK=\dim (\sum_{\beta\in\gK}\g_\beta)=\#\gK$, the symmetry of the multiset
$\spec (x_\gK)$ w.r.t. $1/2$ is equivalent to the symmetry established in Lemma~\ref{lm:symmetry-value}.

In this situation, we have
\[
   \frac{1}{2}(\dim\ut +\#\gK)= \frac{1}{2}\dim\be_\gK=
   \tr_{\be_\gK}(\ad x_\gK)=\sum_{\gamma>0}\gamma(x_\gK)=2\varrho(x_\gK) .
\]
Since $\ind\ut=\#\gK=m$~\cite{jos77}, the sum $ \frac{1}{2}(\dim\ut +\#\gK)$ is the {\it magic number} 
associated with $\ut$. Comparing this with Lemma~\ref{lm:rho-vee}, we obtain
\[
     \frac{1}{2}(\dim\ut +m)=2\varrho(x_\gK)=\sum_{i=1}^m ((\mathsf h^*(\g\lg j\rg)-1) .
\]
\begin{rmk}
The case of $\g=\mathfrak{sl}_{2n+1}$ in Theorem~\ref{thm:spektr-fonin} shows that the eigenvalues of 
the Ooms element for the Frobenius algebra $\be_\gK$ are not always integral. Nevertheless, there are 
interesting classes of Frobenius algebras $\q$ such that $\spec (x_{\q})\subset \BZ$. Using meander 
graphs of type $\GR{A}{n}$~\cite{dk00} or $\GR{C}{n}$~\cite{py17}, I can explicitly describe the Ooms 
element $x_\p$ for any Frobenius {\it seaweed subalgebra\/} $\p$ of $\slno$ or $\spn$ and then prove 
that the eigenvalues of $\ad x_{\p}$ belong to $\BZ$. However, being symmetric with respect to 
$1/2$ and integral, the eigenvalues of such $x_\p$ do not always confine to the interval $[-1,2]$.
\end{rmk}

%%%%%%%%%%%%%%%   Section 5  %%%%%%%%
\section{The abelian ideal of $\be$ associated with the cascade}
\label{sect:ab-ideal}

If $\g$ is not of type $\GR{A}{2p}$, then $x_\gK\in\te$ has the property that 
$\{\gamma(x_\gK)\mid \gamma\in\Delta^+\}\subset \{-1,0,1,2\}$ (Theorem~\ref{thm:spektr-fonin}). By 
Kostant's extension of Peterson's theory~\cite[Section\,3]{ko98}, every such element of $\te$ determines 
an abelian ideal of $\be$. In particular, one may associate an abelian ideal of $\be$ to $x_\gK$ (i.e., to 
$\gK$) as long as $\g$ is not of type $\GR{A}{2p}$. Our goal is to characterise this ideal. If $\ah$ is an 
abelian ideal of $\be$, then $\ah$ is $\te$-stable and $\ah\subset \ut^+$. Hence it suffices to determine 
the set of positive roots $\Delta(\ah)\subset\Delta^+$.

Recall that the {\it inversion set\/} of $w\in W$ is 
$\eus N(w)=\{\gamma\in\Delta^+\mid w(\gamma)\in \Delta^-\}$.
Then $\eus N(w)$ and $\Delta^+\setminus \eus N(w)$ are {\it closed}\/ (under root addition), i.e., if
$\gamma',\gamma''\in \eus N(w)$ and $\gamma'+\gamma''\in\Delta^+$, then
$\gamma'+\gamma''\in \eus N(w)$, and likewise for $\Delta^+\setminus \eus N(w)$. Conversely, 
if $S\subset \Delta^+$ and both $S$ and $\Delta^+\setminus S$ are closed, then $S=\eus N(w)$ for
a unique $w\in W$~\cite[Prop.\,5.10]{ko61}. Below, $\gamma>0$ (resp. $\gamma<0$) is a shorthand for $\gamma\in\Delta^+$
(resp. $\gamma\in\Delta^-$).

{\it\bfseries Kostant's construction}.  
Set $\De_{\sf ab}=\{t\in \te_\BQ\mid -1\le \gamma(t)\le 2\ \ \forall \gamma\in\Delta^+\}$. Kostant 
associates the abelian ideal $\ah_z{\lhd}\be$ to each 
$z\in\De_{\sf ab}\cap\mathcal P^\vee$~\cite[Theorem\,3.2]{ko98}, i.e., if $\gamma(z)\in\{-1,0,1,2\}$ 
for any $\gamma\in\Delta^+$.
%Kostant describes the set of roots of the abelian ideal $\ah_z{\lhd}\be$ related to an arbitrary element 
%$z\in\te_\BQ$ such that $\gamma(z)\in\{-1,0,1,2\}$ for any $\gamma\in\Delta^+$~\cite[Theorem\,3.2]{ko98}. 
Unlike the Peterson method, his construction exploits only $W$ and does not invoke the affine Weyl 
group $\widehat W$ and "minuscule" elements in it. Set 
$\Delta^\pm_z(i)=\{\gamma\in\Delta^\pm\mid \gamma(z)=i\}$ and 
$\Delta_z(i)=\Delta^+_z(i)\cup\Delta^-_z(i)$. Note that \ $-\Delta^+_z(i)=\Delta^-_z(-i)$. We have
\beq       \label{eq:main-partition}
        \Delta^+=\bigsqcup_{i=-1}^2\Delta^+_z(i) 
\eeq
and both subsets $\Delta^+_z(1)\sqcup \Delta^+_z(2)$ and $\Delta^+_z(-1)\sqcup\Delta^+_z(0)$ are 
closed. Therefore, there is a unique $w_z\in W$ such that $\eus N(w_z)=\Delta^+_z(1)\sqcup \Delta^+_z(2)$. By definition, then
\[    
   \Delta(\ah_z)=w_z\bigl(\Delta^+_z(-1)\bigr)\sqcup w_z\bigl(-\Delta^+_z(2)\bigr) \subset \Delta^+.
\]
Hence $\dim\ah_z=\#\bigl(\Delta^+_z(-1)\cup \Delta^+_z(2)\bigr)$. Note that $\Delta_z(0)$ is a root
system in its own right, and $\Delta^+_z(0)$ is a set of positive roots in it. We say that the union 
in~\eqref{eq:main-partition} is the $z$-{\it grading}\/ of $\Delta^+$, and if $\gamma\in \Delta^+_z(i)$, 
then $i$ is the $z$-{\it degree}\/ of $\gamma$.

\begin{prop}           \label{lm:good-prop-w_K}
Let $z\in \De_{\sf ab}\cap\mathcal P^\vee$ be arbitrary. 
\begin{itemize}  
\item[\sf (i)] \ If\/ $\gamma\in \Delta^+_z(0)$ is not a sum of two roots from $\Delta^+_z(0)$, then $w_z(\gamma)\in \Pi$;
\item[\sf (ii)] \ if\/ $\theta(z)=1$, i.e., $\theta\in \Delta^+_z(1)$, then $w_z(\theta)\in -\Pi$.
\end{itemize} 
\end{prop}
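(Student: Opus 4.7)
The plan is to invoke the standard criterion that a positive root $\delta$ is simple if and only if it cannot be written as a sum $\eta_1+\eta_2$ with $\eta_1,\eta_2\in\Delta^+$, and then derive a contradiction by applying $w_z^{-1}$ and tracking $z$-degrees. The crucial tool, recalled earlier in the excerpt, is that $w_z(z)$ is antidominant, so every $\eta\in\Delta^+$ satisfies $\eta(w_z(z))\le 0$; combined with $W$-invariance of the Killing form, this gives $(w_z^{-1}\eta)(z)=\eta(w_z z)\le 0$ for such $\eta$.

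For part~\textsf{(i)}, I would suppose toward a contradiction that $w_z(\gamma)=\eta_1+\eta_2$ with $\eta_i\in\Delta^+$. Setting $\gamma_i:=w_z^{-1}(\eta_i)\in\Delta$ yields $\gamma=\gamma_1+\gamma_2$ and $\gamma_i(z)\le 0$; summing gives $0=\gamma(z)$, so each $\gamma_i(z)=0$ and $\gamma_i\in\Delta_z(0)$. If one had $\gamma_i\in\Delta^-_z(0)$, then $-\gamma_i\in\Delta^+_z(0)\subset\Delta^+\setminus\eus N(w_z)$ would force $w_z(-\gamma_i)=-\eta_i\in\Delta^+$, contradicting $\eta_i>0$. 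Hence $\gamma_i\in\Delta^+_z(0)$, and $\gamma=\gamma_1+\gamma_2$ is the forbidden decomposition.

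For part~\textsf{(ii)}, note first that $\theta\in\Delta^+_z(1)\subset\eus N(w_z)$ forces $w_z(\theta)\in\Delta^-$. I would assume $-w_z(\theta)\notin\Pi$ and write $-w_z(\theta)=\eta_1+\eta_2$ with $\eta_i\in\Delta^+$. Setting $\gamma_i:=-w_z^{-1}(\eta_i)$ gives $\theta=\gamma_1+\gamma_2$ and $\gamma_i(z)\ge 0$. Since $z\in\cP^\vee$ forces $\gamma_i(z)\in\BZ$ and the two values sum to $\theta(z)=1$, up to relabelling $\gamma_1(z)=0$ and $\gamma_2(z)=1$. The equality $w_z(\gamma_1)=-\eta_1\in\Delta^-$ together with $\gamma_1\notin\eus N(w_z)=\Delta^+_z(1)\cup\Delta^+_z(2)$ (its $z$-degree is $0$) forces $\gamma_1\in\Delta^-$, so $-\gamma_1\in\Delta^+_z(0)$. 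Two cases remain for $\gamma_2$: if $\gamma_2\in\Delta^-$, then $\theta=\gamma_1+\gamma_2$ is a sum of two negative roots, impossible; if $\gamma_2\in\Delta^+$, then the identity $\gamma_2=\theta+(-\gamma_1)$ with $-\gamma_1\in\Delta^+$ violates the maximality of $\theta$ in $(\Delta^+,\curle)$.

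The main subtlety, and the point I expect to require the most care, is the sign-splitting in part~\textsf{(ii)}: unlike in \textsf{(i)}, the identification $\gamma_i=-w_z^{-1}(\eta_i)$ introduces a sign, so both $\gamma_i$ can a priori be negative roots, and two distinct inputs are needed to eliminate the remaining cases---antidominance of $w_z(z)$ for the degree constraints, and \emph{maximality} of $\theta$ in the root order to kill the mixed-sign case. The integrality of $\gamma_i(z)$ supplied by $z\in\cP^\vee$ is also essential, as it forces the degree pair to be exactly $\{0,1\}$.
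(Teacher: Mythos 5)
Your proof is correct and follows essentially the same route as the paper's: pull a putative two-term decomposition back through $w_z^{-1}$ and use $\eus N(w_z)=\Delta^+_z(1)\cup\Delta^+_z(2)$ to constrain the $z$-degrees (and signs) of the summands, invoking the maximality of $\theta$ for part~{\sf (ii)}. Your only real deviation is to package the sign/degree analysis via the antidominance of $w_z(z)$; since Proposition~\ref{prop:anti-dom} is proved independently of the present statement (via Ion's lemma), there is no circularity in citing it, even though it appears after this proposition in the text.
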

\begin{proof}
Recall that $\eus N(w_z)=\Delta^+_z(1)\cup\Delta^+_z(2)$.
\\ \indent
{{\sf (i)} \ Since $\gamma\not\in \eus N(w_z)$, we have $w_z(\gamma)>0$. Assume that 
$w_z(\gamma)=\mu_1+\mu_2$, where $\mu_i>0$. Then $\gamma=w_z^{-1}(\mu_1)+
w_z^{-1}(\mu_2)$. Letting $\gamma_i:=w_z^{-1}(\mu_i)$, we get the following possibilities.

1$^o$. If $\gamma_1,\gamma_2>0$, then the $z$-grading of $\Delta^+$ shows that there are further 
possibilities:
\begin{itemize}
\item $\gamma_1\in\Delta^+_z(-1)$ and $\gamma_2\in\Delta^+_z(1)$. But then $\mu_2=w_z(\gamma_2)<0$. A contradiction!
\item $\gamma_1,\gamma_2 \in\Delta^+_z(0)$ -- this contradicts the hypothesis on $\gamma$.
\end{itemize}

2$^o$. Suppose that $\gamma_1>0$ and $\gamma_2<0$. Then 
$\gamma_1\not\in\eus N(w_z)$ and $-\gamma_2\in \eus N(w_z)$. 
\\
Hence $\gamma_1\in \Delta^+_z({\le}0)$ and $-\gamma_2\in \Delta^+_z({\ge}1)$, i.e., 
$\gamma_2\in\Delta^-_z({\le}-1)$.
It follows that $\gamma_1+\gamma_2=\gamma\in \Delta_z({\le}-1)$. A contradiction!

Thus, cases~1$^o$ and 2$^o$ are impossible, and $w_z(\gamma)$ must be simple.
%$\mu_1>0$ and $\mu_2<0$. Hence $\mu_1\not\in\eus N(w_z)$ and $-\mu_2\in\eus N(w_z)$. Then $\mu_1(z)\le 0$ and $-\mu_2(z)\ge 1$. Thus, $\ap(z)=(\mu_1+\mu_2)(z)<0$.  A contradiction!

\sf (ii)} \ Since $\theta(z)=1$, we have $w_z(\theta)<0$. Assume that $w_z(\theta)=-\gamma_1-\gamma_2$, where $\gamma_i>0$. Then $\theta=w_z^{-1}(-\gamma_1)+w_z^{-1}(-\gamma_2)$ and
$\mu_i:=w_z^{-1}(-\gamma_i)>0$ for $i=1,2$. Then $\mu_i\in \eus N(w_z)$ and hence
$\mu_i(z)\ge 1$. Therefore $\theta(z)=\mu_1(z)+\mu_2(z)\ge 2$. A contradiction!
\end{proof}

\begin{rmk}    \label{rem:kak-stroit-nerazlozhimye}
Note that $\gamma\in \Delta^+_z(0)$ is not a sum of two roots from $\Delta^+_z(0)$ if and only if 
$\gamma$ belongs to the {\it base} (=\,set of simple roots) of $\Delta_z(0)$ that is contained in 
$\Delta_z^+(0)$. 
\end{rmk}
Let $\gC\subset\te_\BQ$ denote the {\it dominant Weyl chamber}, i.e., $\gC=\{x\in\te_\BQ\mid
\ap(x)\ge 0 \ \ \forall \ap\in\Pi\}$.
\begin{prop}     \label{prop:anti-dom}
We have $w_z(z)\in -\gC$, i.e., it is anti-dominant. Moreover, $w_z$ is the unique element of
minimal lenght in\/ $W$ that takes $z$ into $-\gC$.
\end{prop}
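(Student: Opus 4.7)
My plan is to handle anti-dominance and minimality separately, both via the standard characterization $\eus N(w_z)=\Delta^+_z(1)\cup\Delta^+_z(2)$ together with the identity $\eus N(w^{-1})=-w(\eus N(w))$.

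For anti-dominance, I want to show that $\ap(w_z(z))\le 0$ for every $\ap\in\Pi$, i.e.\ that $w_z^{-1}(\ap)(z)\le 0$. I would split into two cases according to the sign of $w_z^{-1}(\ap)$. If $w_z^{-1}(\ap)<0$, then $\ap\in \eus N(w_z^{-1})=-w_z(\eus N(w_z))$, so $-w_z^{-1}(\ap)\in \Delta^+_z(1)\cup\Delta^+_z(2)$, giving $w_z^{-1}(\ap)(z)\in\{-1,-2\}$. If instead $w_z^{-1}(\ap)>0$, then $w_z^{-1}(\ap)\notin \eus N(w_z)$ (otherwise $\ap=w_z(w_z^{-1}(\ap))$ would be negative), so $w_z^{-1}(\ap)\in \Delta^+_z(-1)\cup\Delta^+_z(0)$, giving $w_z^{-1}(\ap)(z)\in\{-1,0\}$. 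In either case the value is $\le 0$.

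For minimality, I would take an arbitrary $u\in W$ with $u(z)\in -\gC$ and argue that $\eus N(u)\supseteq \eus N(w_z)$. Indeed, for any $\gamma\in \eus N(w_z)=\Delta^+_z(1)\cup\Delta^+_z(2)$, one has
\[
   (u\gamma)(u(z))=\gamma(z)\in\{1,2\}.
\]
Since $u(z)$ is anti-dominant, every positive root $\mu$ satisfies $\mu(u(z))\le 0$; as $(u\gamma)(u(z))>0$, the root $u(\gamma)$ must be negative, i.e.\ $\gamma\in\eus N(u)$. Therefore
\[
   \ell(u)=\#\eus N(u)\ge \#\eus N(w_z)=\ell(w_z),
\]
which proves the minimality of the length of $w_z$. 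If moreover $\ell(u)=\ell(w_z)$, the inclusion $\eus N(w_z)\subseteq \eus N(u)$ is an equality of finite sets, so $\eus N(u)=\eus N(w_z)$, and the well-known fact that a Weyl group element is determined by its inversion set (the closedness argument cited just before the proposition via \cite[Prop.\,5.10]{ko61}) gives $u=w_z$.

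I do not expect any real obstacle here; the only care needed is in the anti-dominance step to exhaust both sign cases for $w_z^{-1}(\ap)$, and in the minimality step to note that the anti-dominance of $u(z)$ rules out $u(\gamma)$ being a \emph{positive} root with strictly positive value, which is exactly what forces $u(\gamma)<0$.
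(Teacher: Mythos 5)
Your proof is correct. The only difference from the paper is one of packaging: the paper disposes of the proposition in two lines by invoking a known characterization (Ion's Lemma~4.1, or \cite[Theorem\,4.1]{p03}) of the unique minimal-length element sending $\lb$ to its dominant representative, namely that its inversion set is $\{\gamma\in\Delta^+\mid (\gamma,\lb)<0\}$, and then translating ``dominant'' into ``anti-dominant''. You instead reprove that characterization from scratch in the special case at hand: anti-dominance of $w_z(z)$ by the two sign cases for $w_z^{-1}(\ap)$ (both of which you handle correctly, using $\eus N(w_z)=\Delta^+_z(1)\cup\Delta^+_z(2)$ and its complement $\Delta^+_z(-1)\cup\Delta^+_z(0)$), and minimality plus uniqueness via the inclusion $\eus N(w_z)\subseteq\eus N(u)$ for any $u$ with $u(z)\in-\gC$, finished off by the fact that an element of $W$ is determined by its inversion set. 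What your version buys is self-containedness and an explicit display of where the partition \eqref{eq:main-partition} enters; what the paper's version buys is brevity and a pointer to the general statement valid for arbitrary $\lb\in\te^*_\BQ$. No gaps.
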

\begin{proof}
For any $\lb\in\te^*_\BQ$, let $\lb^+$ be the {\sl dominant\/} representative in $W{\cdot}\lb$. 
By~\cite[Lemma\,4.1]{ion04}, there is a unique element of minimal length $w_\lb\in W$ such that 
$w_\lb{\cdot}\lb=\lb^+$ and then $\eus N(w_\lb)=\{\gamma\in\Delta^+\mid (\gamma,\lb)<0\}$ (cf. also 
\cite[Theorem\,4.1]{p03}). Translating this into the assertion on the {\sl anti-dominant\/} representative 
in $W{\cdot}z\subset\te_\BQ$, we see that the element of minimal length $\bar w$ that takes 
$z$ into $-\gC$ is defined the property that
\[
   \eus N(\bar w)=\{\gamma\in\Delta^+\mid (\gamma, z)>0\}=\Delta^+_z(1)\cup\Delta^+_z(2).
\]
Therefore, $\bar w=w_z$.
\end{proof}

We will use Kostant's construction with $z=x_\gK$. Therefore, it is assumed below that 
$\spec (x_\gK)\subset \BZ$, which excludes the series  $\GR{A}{2p}$. For simplicity, write 
$\Delta^+_\gK(i)$, $\ah_\gK$, and 
$w_\gK$ in place of $\Delta^+_{x_\gK}(i)$, $\ah_{x_\gK}$, and $w_{x_\gK}$, respectively. By the symmetry of $\spec (x_\gK)$, one has  
$\dim\ah_\gK=\#\Delta^+_\gK(-1)+\#\Delta^+_\gK(2)=2\#\Delta^+_\gK(2)$ and 
$\#\Delta^+_\gK(0)+\#\gK=\#\Delta^+_\gK(1)$. Set $\Pi_\gK(i)=\Pi\cap\Delta^+_\gK(i)$.

\begin{ex}  \label{ex:ideal-sl-sp}
For $\sltn$ and $\spn$, the element $x_\gK$ is dominant and $\Delta^+_\gK(2)=\varnothing$. 
Therefore, $\ah_\gK=\{0\}$ in these cases. In the other cases, i.e., when $\theta$ is fundamental,
$\ah_\gK$ is a non-trivial abelian ideal of $\be$.
Anyway, for {\bf all} simple Lie algebras except $\mathfrak{sl}_{2n+1}$, we obtain a non-trivial 
element $w_\gK\in W$, which possesses some interesting properties, see below.
\end{ex}

\begin{prop}       \label{prop:fundam}
The rank of the root system $\Delta_\gK(0)$ equals $\rk\g-1$ and the
dominant weight $-w_\gK(x_\gK)$ is a multiple of a fundamental weight. Namely, if 
$w_\gK(\theta)=-\ap_j\in -\Pi$ (cf. Proposition~\ref{lm:good-prop-w_K}), then 
$-w_\gK(x_\gK)=\frac{2}{(\ap_j,\ap_j)}\varpi_j=:\varpi_j^\vee$.
\end{prop}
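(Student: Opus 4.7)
Set $y := -w_\gK(x_\gK)$. I will prove the formula $y = \varpi_j^\vee$ first; the rank claim will then follow automatically, since a positive multiple of a single fundamental coweight has $W$-stabiliser equal to the maximal parabolic $W_{\Pi\setminus\{\ap_j\}}$ of rank $n-1$, and this stabiliser is $W$-conjugate to the reflection subgroup $W(\Delta_\gK(0))$, yielding $\rk\Delta_\gK(0)=n-1$.

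By Proposition~\ref{prop:anti-dom}, $y\in\gC$. Combined with $x_\gK\in\cP^\vee$ (Theorem~\ref{thm:spektr-fonin}, using $\g\ne\GR{A}{2p}$) one has $y\in\gC\cap\cP^\vee$, so write $y=\sum_k c_k\varpi_k^\vee$ with $c_k\in\BZ_{\ge 0}$. From $w_\gK(\theta)=-\ap_j$ one gets $w_\gK^{-1}(\ap_j)=-\theta$, whence
\[
 c_j=\ap_j(y)=-(w_\gK^{-1}\ap_j)(x_\gK)=\theta(x_\gK)=1,
\]
and $\ap_j$ is long because $w_\gK$ is an isometry.

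The other $c_k$ are pinned down by two complementary inputs. First, the triple $(e_\gK,2x_\gK,f_\gK)$ with $e_\gK=\sum_{\beta\in\gK}e_\beta$ and a suitably normalised $f_\gK\in\bigoplus_\beta\g_{-\beta}$ is an $\tri$-triple (the relations $[x_\gK,e_\gK]=e_\gK$ and $[x_\gK,e_{-\beta}]=-e_{-\beta}$ are immediate from $\beta(x_\gK)=1$), so $2x_\gK$ is a characteristic; its dominant conjugate is $2y$, whose labels $2c_k=\ap_k(2y)\in\{0,1,2\}$, forcing $c_k\in\{0,1\}$ by integrality. Second, for dominant $2y$ one has $\theta(2y)=\max_{\gamma\in\Delta}\gamma(2y)$, and the substitution $y=-w_\gK(x_\gK)$ combined with replacing $\gamma\mapsto-w_\gK^{-1}\gamma$ shows this equals $2\max_{\delta\in\Delta}\delta(x_\gK)$, which by Theorem~\ref{thm:spektr-fonin} is $4$ if $\theta$ is fundamental and $2$ otherwise. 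Writing $\theta=\sum n_k\ap_k$, the identity $\theta(y)=n_j+\sum_{k\ne j}n_kc_k$ settles the non-fundamental case ($\GR{A}{2p-1}$ or $\GR{C}{n}$) at once: there $\ap_j$ is the unique long simple root and $n_j=1$, so $\theta(y)=1$ forces all $c_k=0$ for $k\ne j$ and $y=\varpi_j^\vee$.

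In the fundamental case $\theta(y)=2$, leaving two possibilities: $(n_j=2,\ c_k=0\ \text{for}\ k\ne j)$ or $(n_j=1,\ c_{k^*}=1\ \text{for exactly one}\ k^*\ne j\ \text{with}\ n_{k^*}=1)$. I would rule out the second by exploiting the root $\theta-\tap\in\Delta_\gK^+(2)$ supplied by Lemma~\ref{lm:not-dominant}: since $\theta-\tap\in\eus N(w_\gK)$, one obtains $\ap_j+w_\gK(\tap)=-w_\gK(\theta-\tap)\in\Delta^+$; and $\tap\in\Delta_\gK^+(-1)$ implies $w_\gK(\tap)\in\Delta^+$ with $w_\gK(\tap)(y)=1$. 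The elementary coefficient estimate $[\ap_j+w_\gK(\tap):\ap_j]\le n_j$ then forces $n_j\ge 2$ whenever $[w_\gK(\tap):\ap_j]\ge 1$. The main obstacle I expect is eliminating the residual configuration in which $[w_\gK(\tap):\ap_j]=0$, which a priori could occur for $\GR{D}{n}$ and $\GR{E}{6}$, where multiple simple roots have coefficient $1$ in $\theta$; this last step would be closed by a brief inspection using Figures~\ref{fig:class}--\ref{fig:except}, or by invoking the explicit description of $w_\gK^{-1}(\Pi)$ developed in the subsequent sections.
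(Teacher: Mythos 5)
Your route is genuinely different from the paper's: the paper first computes $\rk\Delta_\gK(0)=\rk\g-1$ directly and only then pins down the coweight by the one-line evaluation $1=\theta(x_\gK)=a_i(\ap_j,\varpi_i)$, whereas you front-load the formula via the weighted Dynkin diagram. Most of your steps are sound: the $\tri$-triple $(e_\gK,2x_\gK,f_\gK)$ exists by strong orthogonality, so $c_k\in\{0,1\}$, and the evaluation $\theta(y)=\max_\delta\delta(x_\gK)$ is correct; this settles $\GR{A}{2p-1}$, $\GR{C}{n}$, and every fundamental-$\theta$ type in which at most one simple root has coefficient $1$ in $\theta$ (so $\GR{B}{n}$, $\GR{E}{7}$, $\GR{E}{8}$, $\GR{F}{4}$, $\GR{G}{2}$ fall to pure counting). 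But the residual configuration $y=\varpi_j^\vee+\varpi_{k^*}^\vee$ in types $\GR{D}{n}$ and $\GR{E}{6}$ is a genuine gap, and neither of your proposed closings survives scrutiny. Invoking the explicit description of $w_\gK^{-1}(\Pi)$ from the later sections is circular: Theorem~\ref{thm:w^-1} is applied to $z=x_\gK$ precisely by virtue of Proposition~\ref{prop:fundam}, so the formulae for $w_\gK$ (hence for $w_\gK(\tap)$ and its $\ap_j$-coefficient) presuppose the statement being proved. Your $\tap$-argument only shows that case (b) forces $[w_\gK(\tap):\ap_j]=0$; it gives no independent handle on that coefficient.

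As for the ``brief inspection of Figures~\ref{fig:class}--\ref{fig:except}'': it can close the gap, but note that deciding between your cases (a) and (b) is \emph{equivalent} to deciding whether $\rk\Delta_\gK(0)$ equals $\rk\g-1$ or $\rk\g-2$, since the $W$-stabilisers of $x_\gK$ and of its dominant conjugate $y$ are conjugate reflection subgroups generated by the reflections in $\Delta_\gK(0)$ and $\Delta_y(0)$ respectively. So the inspection you need is exactly the paper's argument: by Remark~\ref{rem:alternate&connected}(2) the set $\Pi_\gK(-1)\cup\Pi_\gK(1)$ is connected with alternating signs, each of its $\#\bigl(\Pi_\gK(-1)\cup\Pi_\gK(1)\bigr)-1$ edges contributes a root $\ap_{i_1}+\ap_{i_2}\in\Delta^+_\gK(0)$, and together with $\Pi_\gK(0)$ this yields $\rk\g-1$ independent roots of $x_\gK$-degree $0$. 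In short, for $\GR{D}{n}$ and $\GR{E}{6}$ the rank claim is the only non-circular way in; once you prove it first (as the paper does), the coweight formula follows from $c_j=\theta(x_\gK)=1$ alone and the entire weighted-Dynkin-diagram apparatus becomes unnecessary.
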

\begin{proof}
Clearly, $-w_\gK(x_\gK)$ is a multiple of a fundamental weight if and only if the rank of the root system 
$\Delta_{\gK}(0)=\{\gamma\in\Delta\mid \gamma(x_\gK)=0\}$ equals $\rk\g-1$. Therefore, it suffices to 
point out $\rk\g-1$ linearly independent roots in $\Delta^+_\gK(0)$.

Since $\Pi_{\pm 1}:=\Pi_\gK(-1)\cup\Pi_\gK(1)$ is connected and the roots from $\Pi_\gK(-1)$ and 
$\Pi_\gK(1)$ alternate in the Dynkin diagram (see Remark~\ref{rem:alternate&connected}(2)), there 
are $(\#\Pi_{\pm 1})-1$ edges therein and each edge gives rise to the root 
$\ap_{i_1}+\ap_{i_2}\in \Delta^+_\gK(0)$. Together with the roots in $\Pi_\gK(0)$, this yields exactly 
$\rk\g-1$ linearly independent roots in $\Delta^+_\gK(0)$. Actually, these roots form the base of 
$\Delta_\gK(0)$ in $\Delta^+_\gK(0)$.

If $w_\gK(x_\gK)=-a_i\varpi_i$ and  $w_\gK(\theta)=-\ap_j$, then
\[
    1=\theta(x_\gK)= w_\gK(\theta)(w_\gK(x_\gK))=a_i(\ap_j, \varpi_i) .
\]
Hence $i=j$ and $a_i=2/(\ap_i,\ap_i)$.
\end{proof}

Important characteristics of the abelian ideal $\ah_\gK$ can be expressed via $w_\gK\in W$. Since 
$\Delta(\ah_\gK)=:\Delta_{\lg\gK\rg}$ is an upper ideal of the poset $(\Delta^+,\curle)$, it is completely 
determined by its subset $\min(\Delta_{\lg\gK\rg})$ of {\it minimal elements\/} w.r.t. the root order 
``$\curle$''. Similarly, the complement $\ov{\Delta_{\lg\gK\rg}}=\Delta^+\setminus \Delta_{\lg\gK\rg}$ is 
determined by the subset of its {\it maximal elements}, $\max(\ov{\Delta_{\lg\gK\rg}})$.

It follows from~\eqref{eq:main-partition} and the definition of $\eus N(w_\gK)$ that
\[ %beq   \label{eq:K-razbienie-Delta}
   \Delta^+=w_\gK\bigl(\Delta^+_\gK(-1)\bigr)\sqcup w_\gK\bigl(\Delta^+_\gK(0)\bigr)\sqcup 
   w_\gK\bigl(-\Delta^+_\gK(1)\bigr) \sqcup w_\gK\bigl(-\Delta^+_\gK(2)\bigr) .   
\] 
In this union, the first and last sets form $\Delta_{\lg\gK\rg}$, and two sets in the middle form $\ov{\Delta_{\lg\gK\rg}}$.
Hence 
\begin{gather}   \label{eq:delta}
   w_\gK^{-1}(\Delta_{\lg\gK\rg}) = \Delta^+_\gK(-1)\sqcup -\Delta^+_\gK(2)= 
   \Delta^+_\gK(-1)\sqcup\Delta^-_\gK(-2) \ ;
\\          \label{eq:bar-delta}
  w_\gK^{-1}(\ov{\Delta_{\lg\gK\rg}}) = \Delta^+_\gK(0)\sqcup -\Delta^+_\gK(1)= 
  \Delta^+_\gK(0)\sqcup \Delta^-_\gK(-1) \ .
\end{gather}

\begin{thm}  \label{thm:min_K}  
One has \ $\min(\Delta_{\lg\gK\rg})= w_\gK(\Pi_\gK(-1))$. In other words, 
\[ 
\gamma\in\min(\Delta_{\lg\gK\rg})\Longleftrightarrow w_\gK^{-1}(\gamma)\in\Pi_\gK(-1) . 
\]
\end{thm}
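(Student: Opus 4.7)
The strategy is to prove both inclusions $w_\gK(\Pi_\gK(-1))\subseteq\min(\Delta_{\lg\gK\rg})$ and $\min(\Delta_{\lg\gK\rg})\subseteq w_\gK(\Pi_\gK(-1))$ by systematic use of the $x_\gK$-grading of $\Delta^+$, the identities \eqref{eq:delta}--\eqref{eq:bar-delta}, and the defining property $\eus N(w_\gK)=\Delta^+_\gK(1)\sqcup\Delta^+_\gK(2)$.

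For the inclusion ``$\supseteq$'', I take $\mu\in\Pi_\gK(-1)$ and set $\gamma=w_\gK(\mu)\in\Delta_{\lg\gK\rg}$. Supposing $\gamma=\ap+\gamma'$ with $\ap\in\Pi$ and $\gamma'\in\Delta^+$, I apply $w_\gK^{-1}$ to write $\mu=\nu_1+\nu_2$ with $\nu_1=w_\gK^{-1}(\ap)$, $\nu_2=w_\gK^{-1}(\gamma')$. Simplicity of $\mu$ forces exactly one of the $\nu_i$ to be negative; the membership $-\nu\in\eus N(w_\gK)$ for that negative root pins its $x_\gK$-value to $\{-1,-2\}$, while the positive one has $x_\gK$-value $\in\{-1,0\}$. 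Combined with $\mu(x_\gK)=-1$, this forces $\nu_2\in\Delta^+_\gK(0)\sqcup\Delta^-_\gK(-1)$ in either sub-case. Then \eqref{eq:bar-delta} places $\gamma'=w_\gK(\nu_2)\in\ov{\Delta_{\lg\gK\rg}}$, so $\gamma-\ap\notin\Delta_{\lg\gK\rg}$ and $\gamma$ is minimal.

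For ``$\subseteq$'', let $\gamma\in\min(\Delta_{\lg\gK\rg})$ and $\mu=w_\gK^{-1}(\gamma)\in\Delta^+_\gK(-1)\sqcup\Delta^-_\gK(-2)$. Proposition~\ref{prop:fundam} yields $-w_\gK(x_\gK)=\varpi_j^\vee$, and since evaluation against $\varpi_j^\vee$ returns the $\ap_j$-coefficient, one obtains $\Delta_{\lg\gK\rg}=\{\delta\in\Delta^+\mid [\delta:\ap_j]\ge 1\}$. Minimality then forces $[\gamma:\ap_j]=1$: any $\delta\in\Delta_{\lg\gK\rg}$ with $[\delta:\ap_j]\ge 2$ is non-simple and admits some $\ap_i\in\Pi$ with $\delta-\ap_i\in\Delta^+$, and regardless of whether $i=j$ one still has $[\delta-\ap_i:\ap_j]\ge 1$, contradicting minimality. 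Hence $\mu\in\Delta^+_\gK(-1)$ and the possibility $\mu\in\Delta^-_\gK(-2)$ is excluded.

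The main step, which I expect to be the technical heart of the argument, is to show that this $\mu$ is simple. If $\mu\notin\Pi$, write $\mu=\ap+\mu'$ with $\ap\in\Pi$, $\mu'\in\Delta^+$. Since $\ap(x_\gK)\in\{-1,0,1\}$ for every simple $\ap$ (Figures~\ref{fig:class} and~\ref{fig:except}) and $\Delta^+_\gK(-2)=\varnothing$ (Theorem~\ref{thm:spektr-fonin}), the split $-1=\ap(x_\gK)+\mu'(x_\gK)$ leaves only two feasible cases: (a) $\ap\in\Pi_\gK(-1)$ and $\mu'\in\Delta^+_\gK(0)$, or (b) $\ap\in\Pi_\gK(0)$ and $\mu'\in\Delta^+_\gK(-1)$. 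In case (a), \eqref{eq:delta}--\eqref{eq:bar-delta} give $w_\gK(\ap)\in\Delta_{\lg\gK\rg}$ and $w_\gK(\mu')\in\ov{\Delta_{\lg\gK\rg}}\subset\Delta^+$, so $w_\gK(\ap)=\gamma-w_\gK(\mu')\prec\gamma$ is a strictly smaller element of $\Delta_{\lg\gK\rg}$, contradicting minimality; case (b) is symmetric with $\ap$ and $\mu'$ swapped. Thus $\mu\in\Pi\cap\Delta^+_\gK(-1)=\Pi_\gK(-1)$. The main obstacle is the sign/value bookkeeping in both inclusions, but it is tightly constrained by $\eus N(w_\gK)$ and the $x_\gK$-spectrum on $\Pi$, and so becomes mechanical once tabulated.
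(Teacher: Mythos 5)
Your ``$\supseteq$'' inclusion is correct and is in substance the paper's own argument: you decompose $\gamma=w_\gK(\mu)=\ap+\gamma'$, pull back by $w_\gK^{-1}$, and use the simplicity of $\mu$ together with $\eus N(w_\gK)=\Delta^+_\gK(1)\sqcup\Delta^+_\gK(2)$ and $\mu(x_\gK)=-1$ to force $w_\gK^{-1}(\gamma')\in\Delta^+_\gK(0)\sqcup\Delta^-_\gK(-1)$, whence $\gamma'\in\ov{\Delta_{\lg\gK\rg}}$ by \eqref{eq:bar-delta}. Your ``main step'' (simplicity of $\mu$ once one knows $\mu\in\Delta^+_\gK(-1)$) likewise matches the paper's treatment of the case $w_\gK^{-1}(\gamma)>0$.

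The gap is in the reduction preceding that main step. The claim $\Delta_{\lg\gK\rg}=\{\delta\in\Delta^+\mid[\delta:\ap_j]\ge1\}$ is false. The identity $-w_\gK(x_\gK)=\varpi_j^\vee$ only gives $\bigl(w_\gK^{-1}(\delta)\bigr)(x_\gK)=-[\delta:\ap_j]$, and an $x_\gK$-degree equal to $-1$ for $w_\gK^{-1}(\delta)$ is compatible with \emph{both} $w_\gK^{-1}(\delta)\in\Delta^+_\gK(-1)$ (then $\delta\in\Delta_{\lg\gK\rg}$ by \eqref{eq:delta}) and $w_\gK^{-1}(\delta)\in\Delta^-_\gK(-1)$ (then $\delta\in\ov{\Delta_{\lg\gK\rg}}$ by \eqref{eq:bar-delta}); the sign of $w_\gK^{-1}(\delta)$ is not determined by $[\delta:\ap_j]$. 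Concretely, for $\GR{E}{6}$ one has $\ap_j=\ap_6$, and $21$ positive roots satisfy $[\delta:\ap_6]\ge1$, whereas $\dim\ah_\gK=2$ (only $\theta$ and $\theta-\tap$ lie in the ideal, cf.\ Theorem~\ref{thm:ab-ideal}). Because of this, your exclusion of the case $w_\gK^{-1}(\gamma)\in-\Delta^+_\gK(2)$ collapses: the step ``$[\delta-\ap_i:\ap_j]\ge1$, hence $\delta-\ap_i\in\Delta_{\lg\gK\rg}$'' is precisely an instance of the false claim. That case needs a separate argument. The paper's route: set $\delta=-w_\gK^{-1}(\gamma)\in\Delta^+_\gK(2)$; since $\theta(x_\gK)=1$, $\delta\ne\theta$, so $\delta+\nu_1\in\Delta^+$ for some $\nu_1>0$, and the $x_\gK$-grading forces either $\nu_1\in\Delta^+_\gK(-1)$, $\delta+\nu_1\in\Delta^+_\gK(1)$, or $\nu_1\in\Delta^+_\gK(0)$, $\delta+\nu_1\in\Delta^+_\gK(2)$; in both cases $\gamma=w_\gK(\nu_1)+w_\gK(-\delta-\nu_1)$ writes $\gamma$ as the sum of an element of $\Delta_{\lg\gK\rg}$ and a positive root, so $\gamma$ is not minimal. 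You need to supply an argument of this kind to close the gap.
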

\begin{proof}  We repeatedly use the following observation.
For $\gamma\in\Delta_{\lg\gK\rg}$, it follows from~\eqref{eq:delta} that 
if $w_\gK^{-1}(\gamma)>0$, then $w_\gK^{-1}(\gamma)\in \Delta^+_\gK(-1)$; whereas if
$w_\gK^{-1}(\gamma)<0$, then $w_\gK^{-1}(\gamma)\in -\Delta^+_\gK(2)$.

1$^o$. If $w_\gK^{-1}(\gamma)=\ap\in\Pi_\gK(-1)$, then $\gamma\in\Delta_{\lg\gK\rg}$. Assume further that 
$\gamma$ is not a minimal element of $\Delta_{\lg\gK\rg}$, i.e., $\gamma=\gamma'+\mu$  for some
$\gamma'\in\Delta_{\lg\gK\rg}$ and $\mu> 0$. Then $\ap=w_\gK^{-1}(\gamma')+w_\gK^{-1}(\mu)$ and 
there are two possibilities:
\begin{description}
\item[(a)] \   $w_\gK^{-1}(\gamma')<0$ and $w_\gK^{-1}(\mu)>0$;
\item[(b)] \  $w_\gK^{-1}(\gamma')>0$ and $w_\gK^{-1}(\mu)<0$.
\end{description}
For {\bf (a)}:  By~\eqref{eq:delta} and~\eqref{eq:bar-delta}, one has 
$w_\gK^{-1}(\gamma')\in \Delta^-_\gK(-2)$ and 
$w_\gK^{-1}(\mu)\in \Delta^+_\gK(0)\cup\Delta^+_\gK(-1)$. Then their sum belongs to 
$\Delta_\gK({\le}-2)=\Delta_{\gK}(-2)$, and this cannot be $\ap\in \Delta^+_\gK(-1)$. Hence this case is 
impossible.
\\
For {\bf (b)}:  By~\eqref{eq:delta} and~\eqref{eq:bar-delta}, one has 
$w_\gK^{-1}(\gamma')\in \Delta^+_\gK(-1)$ and 
$w_\gK^{-1}(\mu)\in \Delta^-_\gK(-2)\cup\Delta^-_\gK(-1)$. Then their sum again belongs to 
$\Delta_{\gK}(-2)$, which is impossible, too.

Thus, $\gamma=w_\gK(\ap)$ must be a minimal element of $\Delta_{\lg\gK\rg}$.

2$^o$.  Suppose that $\gamma\in\Delta_{\lg\gK\rg}$ and $w_\gK^{-1}(\gamma)\not\in \Pi_\gK(-1)$.
By~\eqref{eq:delta}, there is a dichotomy:
\begin{description}
\item[(a)] \   $w_\gK^{-1}(\gamma)>0$;
\item[(b)] \  $w_\gK^{-1}(\gamma)<0$.
\end{description}

For {\bf (a)}:  By~\eqref{eq:delta}, one has $w_\gK^{-1}(\gamma)\in\Delta^+_\gK(-1)$. Since 
$w_\gK^{-1}(\Delta_{\lg\gK\rg})\cap\Pi=\Pi_\gK(-1)$, we have $w_\gK^{-1}(\gamma)=\mu_1+\mu_2$ for
some $\mu_1,\mu_2>0$. From the $x_\gK$-grading of $\Delta^+$, we deduce that 
$\mu_1\in\Delta^+_\gK(-1)$ and $\mu_2\in\Delta^+_\gK(0)$. Letting $\gamma_i=w_\gK(\mu_i)$, we 
see that $\gamma_1\in\Delta_{\lg\gK\rg}$ and $\gamma_2>0$. Hence $\gamma=\gamma_1+\gamma_2$ 
is not a minimal element of $\Delta_{\lg\gK\rg}$.
\\ \indent
For {\bf (b)}: Here $w_\gK^{-1}(\gamma)\in -\Delta^+_\gK(2)$. Since $\theta\in\Delta^+_\gK(1)$, we have
$w_\gK^{-1}(\gamma)\ne -\theta$ and hence $w_\gK^{-1}(\gamma)=\nu_1-\nu_2$ for some 
$\nu_1,\nu_2>0$.  Using the $x_\gK$-grading of $\Delta^+$, one again encounters two possibilities:
\begin{itemize}
\item[\sf (i)] \ $\nu_1\in \Delta^+_\gK(-1), \nu_2\in \Delta^+_\gK(1)$;
\item[\sf (ii)] \ $\nu_1\in \Delta^+_\gK(0), \nu_2\in \Delta^+_\gK(2)$.
\end{itemize}
In both cases, $\gamma=w_\gK(\nu_1)+w_\gK(-\nu_2)$ and one of the summands lies in $\Delta_{\lg\gK\rg}$, 
while the other is positive.
%one obtains $w_\gK(\nu_1)\in\Delta_{\lg\gK\rg}$ and $w_\gK(-\nu_2)>0$. 
Hence $\gamma\not\in\min(\Delta_{\lg\gK\rg})$.
\end{proof}

\begin{thm}       \label{thm:max_K}  
One has \ $\max(\ov{\Delta_{\lg\gK\rg}})= -w_\gK(\Pi_\gK(1))$. In other words,
\[ 
\gamma\in\max(\ov{\Delta_{\lg\gK\rg}})\Longleftrightarrow -w_\gK^{-1}(\gamma)\in\Pi_\gK(1) . 
\]
\end{thm}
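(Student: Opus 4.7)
The plan is to mirror the two-part proof of Theorem~\ref{thm:min_K}, now using the partition~\eqref{eq:bar-delta}: $w_\gK^{-1}(\ov{\Delta_{\lg\gK\rg}})=\Delta^+_\gK(0)\sqcup\Delta^-_\gK(-1)$. For the ``if'' direction, suppose $w_\gK^{-1}(\gamma)=-\ap$ with $\ap\in\Pi_\gK(1)$; then $\gamma\in\ov{\Delta_{\lg\gK\rg}}$ is immediate from~\eqref{eq:bar-delta}. Assume $\gamma+\mu\in\ov{\Delta_{\lg\gK\rg}}$ for some $\mu\in\Delta^+$, so that $-\ap+w_\gK^{-1}(\mu)\in\Delta^+_\gK(0)\sqcup\Delta^-_\gK(-1)$. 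The condition $\mu>0$ together with $\eus N(w_\gK)=\Delta^+_\gK(1)\cup\Delta^+_\gK(2)$ forces $w_\gK^{-1}(\mu)\in\Delta^+_\gK(-1)\cup\Delta^+_\gK(0)\cup\Delta^-_\gK(-1)\cup\Delta^-_\gK(-2)$, hence $w_\gK^{-1}(\mu)(x_\gK)\le 0$. Matching the target degrees $\{0,-1\}$ pins down $w_\gK^{-1}(\mu)\in\Delta^+_\gK(0)$, whence $w_\gK^{-1}(\mu)-\ap$ must be a negative root; since $w_\gK^{-1}(\mu)>0$ and $\ap$ is simple, this forces $w_\gK^{-1}(\mu)=\ap$ and sum~$0$, not a root---contradiction, so $\gamma\in\max(\ov{\Delta_{\lg\gK\rg}})$.

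For the ``only if'' direction, suppose $\gamma\in\ov{\Delta_{\lg\gK\rg}}$ and $\nu:=w_\gK^{-1}(\gamma)$ is not of the form $-\ap$ for any $\ap\in\Pi_\gK(1)$. By~\eqref{eq:bar-delta}, either (a) $\nu\in\Delta^-_\gK(-1)$ with $-\nu\in\Delta^+_\gK(1)$ \emph{non-simple}, or (b) $\nu\in\Delta^+_\gK(0)$. We exhibit $\eta\in\Delta^+$ with $\gamma+\eta\in\ov{\Delta_{\lg\gK\rg}}$ in each case. In case~(a), a decomposition $-\nu=\mu_1+\mu_2$ with $\mu_1\in\Delta^+_\gK(0)$ and $\mu_2\in\Delta^+_\gK(1)$ gives $\gamma=-w_\gK(\mu_1)-w_\gK(\mu_2)$; setting $\eta:=w_\gK(\mu_1)>0$ and $\gamma_2:=-w_\gK(\mu_2)>0$ yields $\gamma+\eta=\gamma_2$, with $w_\gK^{-1}(\gamma_2)=-\mu_2\in\Delta^-_\gK(-1)$, so $\gamma_2\in\ov{\Delta_{\lg\gK\rg}}$. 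In case~(b), one seeks $\mu_2\in\Delta^+_\gK(0)$ with $\mu_1:=\nu+\mu_2\in\Delta^+_\gK(0)$, giving $\gamma+w_\gK(\mu_2)=w_\gK(\mu_1)\in\ov{\Delta_{\lg\gK\rg}}$; if no such $\mu_2$ exists, one instead finds $\zeta\in\Delta^+_\gK(1)$ with $\zeta-\nu\in\Delta^+_\gK(1)$ (for instance, taking $\zeta=\beta_i$ for the cascade root with $\supp(\nu)\subset\supp(\beta_i)$).

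The main obstacle is ensuring these good decompositions exist. In case~(a), a naive decomposition $-\nu=\ap+\delta$ with $\ap\in\Pi$ may fall into the bad $(-1,2)$-type pattern ($\ap\in\Pi_\gK(-1)$, $\delta\in\Delta^+_\gK(2)$), which produces $\gamma_2\in\Delta_{\lg\gK\rg}$ rather than $\ov{\Delta_{\lg\gK\rg}}$. To rule this out, two structural inputs are central: $(-\nu)(x_\gK)=1$ together with $\ap(x_\gK)\in\{-1,0,1\}$ forces $\supp(-\nu)\cap\Pi_\gK(1)\ne\varnothing$; and any $\delta\in\Delta^+_\gK(2)$ is non-simple (since $\Pi_\gK(2)=\varnothing$) and further splits as $\delta=\ap'+\delta'$ with $\ap'\in\Pi_\gK(0)\cup\Pi_\gK(1)$. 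Combined with the alternating adjacency of $\Pi_\gK(\pm 1)$ from Remark~\ref{rem:alternate&connected}(3), one can reshuffle any bad decomposition into a good one; an analogous combinatorial argument handles case~(b) internally to the sub-root system $\Delta_\gK(0)$. Alternatively, the theorem may be verified type-by-type once the explicit form of $w_\gK$ from Section~\ref{sect:w_K&perebor} and the data of Appendix~\ref{sect:tables} are in hand.
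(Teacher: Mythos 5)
Your ``if'' direction and your treatment of the case $w_\gK^{-1}(\gamma)\in\Delta^+_\gK(0)$ are sound and essentially follow the paper's route; in fact your observation that one can always take $\zeta=\beta_j$ for the cascade root with $(\nu,\beta_j)>0$ (so that $\beta_j-\nu\in\Delta^+_\gK(1)$ by the Heisenberg property of $\gH_{\beta_j}$) settles that case cleanly and need not be relegated to a fallback. The problem is the remaining case, which is the heart of the theorem: $-w_\gK^{-1}(\gamma)=\nu_1+\nu_2$ with $\nu_1\in\Delta^+_\gK(-1)$, $\nu_2\in\Delta^+_\gK(2)$. You correctly identify this as the obstacle and name plausible ingredients ($\Pi_\gK(2)=\varnothing$, the alternating pattern on $\Pi_\gK(\pm1)$), but the assertion that ``one can reshuffle any bad decomposition into a good one'' is exactly the statement that needs proof, and your sketch never addresses the one point on which the reshuffle hinges: when you split $\nu_2=\ap'+\delta'$ and try to recombine a piece with $\nu_1$, you must know that the recombined sum (e.g.\ $\nu_1+\ap'$ or $\nu_1+\delta'$) is actually a \emph{root}. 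Sums of roots with the right $x_\gK$-degrees need not be roots, so the degree bookkeeping alone does not close the argument.

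The paper closes it as follows: since $\nu_2\in\Delta^+_\gK(2)$ is long (Remark~\ref{rem:discussion}(3)) and $\nu_1+\nu_2$ is a root, comparing lengths gives $(\nu_1,\nu_2)<0$; since $\nu_2$ is not simple, write $\nu_2=\nu_2'+\nu_2''$ and choose the labels so that $(\nu_1,\nu_2')<0$, which \emph{guarantees} $\nu_1+\nu_2'\in\Delta$. The degrees of $(\nu_2',\nu_2'')$ can only be $(1,1)$, $(2,0)$ or $(0,2)$, so $(\nu_1+\nu_2')+\nu_2''$ is either a $(0,1)$-decomposition (done) or again a $(-1,2)$-decomposition with $\htt(\nu_2'')<\htt(\nu_2)$, and one concludes by induction on $\htt(\nu_2)$. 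Your proposal lacks both the inner-product argument that produces a genuine root after recombination and the induction that terminates the process, so as written it has a genuine gap at the decisive step. (The type-by-type verification you offer as an alternative would of course work, but it is not carried out either.)
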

\begin{proof}
To a great extent, the proof is analogous to that of Theorem~\ref{thm:min_K}, and we skip similar
arguments.  %If $\gamma\in \ov{\Delta_{\lg\gK\rg}}$, then  

1$^o$. If $-w_\gK^{-1}(\gamma)\in\Pi_\gK(1)$, then an argument similar to that in part 1$^o$ of
Theorem~\ref{thm:min_K} proves that $\gamma\in\max(\ov{\Delta_{\lg\gK\rg}})$.

2$^o$.  Conversely, suppose that $\gamma\in\ov{\Delta_{\lg\gK\rg}}$ and 
$-w_\gK^{-1}(\gamma)\not\in \Pi_\gK(1)$. In view of~\eqref{eq:bar-delta}, one has to handle two 
possibilities for $w_\gK^{-1}(\gamma)$.

{\bf (a)}: If $w_\gK^{-1}(\gamma)>0$, then $w_\gK^{-1}(\gamma)\in\Delta^+_\gK(0)$. Arguing as in the 
proof of Theorem~~\ref{thm:min_K} (part 2$^o${\bf (a)}), we show that $\gamma=\gamma_1-\gamma_2$, where 
$\gamma_1\in \ov{\Delta_{\lg\gK\rg}}$ and $\gamma_2>0$. Hence $\gamma\not\in\max(\ov{\Delta_{\lg\gK\rg}})$.

{\bf (b)}: If $w_\gK^{-1}(\gamma)<0$, then $-w_\gK^{-1}(\gamma)\in\Delta^+_\gK(1)\setminus\Pi_\gK(1)$.
Hence $-w_\gK^{-1}(\gamma)=\nu_1+\nu_2$ for some $\nu_1,\nu_2>0$. There again are two possibilities:
\begin{enumerate}
\item \  $\nu_1\in \Delta^+_\gK(0)$, \ \ $\nu_2\in \Delta^+_\gK(1)$; \ \quad 
[$(0,1)$-decomposition of \ $-w_\gK^{-1}(\gamma)$]
\item \  $\nu_1\in \Delta^+_\gK(-1)$, $\nu_2\in \Delta^+_\gK(2)$. \quad 
[$(-1,2)$-decomposition of \ $-w_\gK^{-1}(\gamma)$]
\end{enumerate}

In case (1), we get $\gamma=\gamma_2-\gamma_1$, where
$\gamma_2:=-w_\gK(\nu_2)\in \ov{\Delta_{\lg\gK\rg}}$ and $\gamma_1=w_\gK(\nu_1)>0$. Hence
$\gamma\not\in\max(\ov{\Delta_{\lg\gK\rg}})$.

In case (2), one similarly obtains the presentation of $\gamma$ as difference of two elements of 
$\Delta_{\lg\gK\rg}$, which is useless for us. However, one can replace such a $(-1,2)$-decomposition of 
$-w_\gK^{-1}(\gamma)$ with a $(0,1)$-decomposition, which is sufficient. Since 
$\nu_2\in \Delta^+_\gK(2)$, the root $\nu_2$ is long (Remark~\ref{rem:discussion}(3)) and not simple 
(for, $\Pi_\gK(2)=\varnothing$). Hence $(\nu_1,\nu_2)<0$ and $\nu_2=\nu_2'+\nu_2''$ with 
$\nu_1',\nu_2''>0$. W.l.o.g., we may assume that $(\nu_1,\nu'_2)<0$. One has three possibilities for the 
$x_\gK$-degrees of $(\nu_1',\nu_2'')$, i.e., $(1,1), (2,0), (0,2)$, and it is easily seen that 
$-w_\gK^{-1}(\gamma)=(\nu_1+\nu'_2)+\nu''_2$ is either a $(0,1)$-decomposition, or still a 
$(-1,2)$-decomposition, with $\nu''_2\in\Delta^+_\gK(2)$. But
in this last case we have $\htt(\nu''_2)< \htt(\nu_2)$, which provides the induction step.
\end{proof}

\begin{rmk}     \label{rem:verno-for-all-z}
%In this section, we mostly deal with properties of $w_\gK$ and the abelian ideal $\ah_\gK\lhd\be$. Nevertheless, 
Theorem~\ref{thm:min_K} holds for {\sl arbitrary} $z\in\De_{\sf ab}\cap\mathcal P^\vee$ 
in place of $x_\gK$, with certain amendments. That is, if $\theta(z)\le 1$, then the statement and the 
proof remain the same. If $\theta(z)=2$, then $-w_z(\theta)$ has to be added to $\min\Delta(\ah_z)$.
Certain complements of similar nature are also required in Theorem~\ref{thm:max_K}. I hope to elaborate on this topic in a subsequent publication.
% and the related ideal $\ah_z$, with the same proof. Some other results hold for $z$ satisfying certain additional conditions. This refers to Theorem~\ref{thm:max_K} (where the condition that $\Pi_z(2)=\varnothing$ is required) and Proposition~\ref{prop:fundam}. %for Theorem~\ref{thm:max_K}.
\end{rmk}

%%%%%%%%%%%%   Section 6 "explicit"    %%%%%%
\section{An explicit description of $w_\gK\in W$ and the ideals $\ah_\gK$}    
\label{sect:w_K&perebor}

\noindent
The element $w_\gK\in W$ and the abelian ideal $\ah_\gK$ have many interesting properties, which can 
be verified case-by-case. To this end, we need explicit formulae for $w_\gK$.  Our main tool is the 
following 
\begin{thm}      \label{thm:w^-1}
Given $z\in\De_{\sf ab}\cap\mathcal P^\vee$, suppose that $\rk\Delta_z(0)=\rk\Delta-1$ and $\theta(z)=1$. Then 
\[
   w_z^{-1}(\Pi)=\{\text{\normalfont the base of } \Delta_z(0) \ \text{ \normalfont in } \ \Delta^+_z(0)\}\cup \{-\theta\} . 
\]
\end{thm}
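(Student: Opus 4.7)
The plan is to combine the two halves of Proposition~\ref{lm:good-prop-w_K} with a cardinality count. Everything needed is already in place: part (i) tells us that every element of the base $\Pi(0)$ of $\Delta_z(0)$ sitting inside $\Delta^+_z(0)$ is sent by $w_z$ into $\Pi$, and part (ii) together with $\theta(z)=1$ tells us that $w_z(\theta)=-\ap_*$ for some $\ap_*\in\Pi$, i.e., $w_z(-\theta)=\ap_*\in\Pi$. So the right-hand side of the claimed equality is contained in $w_z^{-1}(\Pi)$.

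Next I would carry out the counting. The hypothesis $\rk\Delta_z(0)=\rk\Delta-1=n-1$ forces $|\Pi(0)|=n-1$, and since $w_z\in W$ is a bijection, $|w_z^{-1}(\Pi)|=n$. Thus the candidate set $\Pi(0)\cup\{-\theta\}$ already has the correct cardinality $n$, provided its elements are distinct and their $w_z$-images are distinct. Distinctness of the elements themselves is immediate since every element of $\Pi(0)$ is positive whereas $-\theta<0$. For distinctness of $w_z$-images it suffices to observe that if $\mu\in\Pi(0)$ then $w_z(\mu)\ne -w_z(\theta)=\ap_*$, for otherwise $\mu=-\theta$, contradicting $\mu>0$. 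Moreover $w_z\vert_{\Pi(0)}$ is injective (being a restriction of a bijection), so $w_z(\Pi(0))$ consists of $n-1$ distinct simple roots all different from $\ap_*$, i.e., it equals $\Pi\setminus\{\ap_*\}$.

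Putting the pieces together yields
\[
  w_z\bigl(\Pi(0)\cup\{-\theta\}\bigr)=(\Pi\setminus\{\ap_*\})\cup\{\ap_*\}=\Pi ,
\]
and the equality of cardinalities forces
\[
  w_z^{-1}(\Pi)=\Pi(0)\cup\{-\theta\}
\]
as required. There is no real obstacle here: the whole argument is really just bookkeeping on top of Proposition~\ref{lm:good-prop-w_K}, with the rank hypothesis supplying the correct count on the base side and the hypothesis $\theta(z)=1$ supplying the missing $n$-th preimage $-\theta$. The only point that deserves a line of care is the disjointness check $w_z(\mu)\ne\ap_*$ for $\mu\in\Pi(0)$, which, as indicated above, reduces immediately to the sign of the two elements of $\Delta$ involved.
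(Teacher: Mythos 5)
Your proof is correct and follows essentially the same route as the paper: both arguments apply Proposition~\ref{lm:good-prop-w_K}(i) to the $\rk\Delta-1$ elements of the base of $\Delta_z(0)$ in $\Delta^+_z(0)$ and Proposition~\ref{lm:good-prop-w_K}(ii) to $\theta$, and then conclude by counting. The paper compresses the final bookkeeping into ``the assertion follows,'' which you have simply written out in full.
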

\begin{proof}
Set $p=\rk\Delta$, and let $\nu_1,\dots,\nu_{p-1}\in \Delta^+_z(0)$ be the base of $\Delta_z(0)$. 
By Proposition~\ref{lm:good-prop-w_K}, we have $w_z(\nu_i)\in \Pi$ ($i=1,\dots,p{-}1$) and 
$w_z(\theta)\in -\Pi$. The assertion follows.
\end{proof}
By Proposition~\ref{prop:fundam}, Theorem~\ref{thm:w^-1} applies to $z=x_\gK$. We demonstrate below
how to use this technique for finding $w_\gK\in GL(\te)$.

\begin{ex}    \label{ex:how-find-w_K}
(1) \ Let $\g$ be of type $\GR{D}{2n}$. Then $\Pi_\gK(0)=\varnothing$ and the base of $\Delta^+_\gK(0)$ 
corresponds to the edges of the Dynkin diagram, i.e., it consists of
$\ap_1+\ap_2, \ap_2+\ap_3,\dots, \ap_{2n-2}+\ap_{2n-1}, \ap_{2n-2}+\ap_{2n}$, cf. the diagram for 
$\GR{D}{2n}$ in Fig.~\ref{fig:class}. Therefore, the root system $\Delta_\gK(0)$ is of type 
$\GR{A}{n-1}+\GR{D}{n}$. More precisely, 

$\{\ap_1+\ap_2, \ap_3+\ap_4, \dots, \ap_{2n-3}+\ap_{2n-2}\}$ is a base for $\Delta(\GR{A}{n-1})$;

$\{\ap_2+\ap_3,\ap_4+\ap_5,\dots, \ap_{2n-4}+\ap_{2n-3}, \ap_{2n-2}+\ap_{2n-1},\ap_{2n-2}+\ap_{2n}\}$
is a base for $\Delta(\GR{D}{n})$.
\\
Since the Dynkin diagram of $\GR{A}{n-1}+\GR{D}{n}$ is obtained by removing the node $\ap_n$ from
the Dynkin diagram of $\GR{D}{2n}$, we must have $w_\gK^{-1}(\ap_n)=-\theta$. Then an easy argument shows that %we must have
\begin{itemize}
\item $w_\gK^{-1}(\ap_1)=\ap_{2n-3}+\ap_{2n-2},\dots, w_\gK^{-1}(\ap_{n-2})=\ap_{3}+\ap_{4}, \  
w_\gK^{-1}(\ap_{n-1})=\ap_{1}+\ap_{2}$ \ -- \\for the $\GR{A}{n-1}$-part;
\item $w_\gK^{-1}(\ap_{n+1})=\ap_{2}+\ap_{3}, %\ w_\gK^{-1}(\ap_{n+2})=\ap_{4}+\ap_{5},
\dots, w_\gK^{-1}(\ap_{2n-2})=\ap_{2n-4}+\ap_{2n-3}, \\
w_\gK^{-1}(\{\ap_{2n-1},\ap_{2n}\})=\{\ap_{2n-2}+\ap_{2n-1}, \ \ap_{2n-2}+\ap_{2n-1}\}$ \ -- \ for the $\GR{D}{n}$-part;
\end{itemize}
The only unclear point for the $\GR{D}{n}$-part is how to distinguish $w_\gK^{-1}(\ap_{2n-1})$ and $w_\gK^{-1}(\ap_{2n})$. Using the expressions of simple roots of $\GR{D}{2n}$ via $\{\esi_i\}$, 
$i=1,\dots,2n$, we obtain two possibilities for $w_\gK$ as a signed permutation on $\{\esi_i\}$, where the only ambiguity concerns the sign of transformation $\esi_{2n}\mapsto \pm\esi_{2n}$. We then choose this
sign so that the total number of minuses be even, see Example~\ref{ex:w_K-so} below.
\\ \indent
(2) Similar argument works for the other orthogonal series.
\\ \indent
(3) For the exceptional Lie algebras, a certain ambiguity (due to the symmetry of the Dynkin diagram) occurs only for $\GR{E}{6}$.
\end{ex}

For the classical cases, our formulae for $w_\gK$ use the explicit standard models of $W$ as (signed) permutations on the set of $\{\esi_i\}$.  Write $\text{ord}(w_\gK(\g))$ for the order of $w_\gK=w_\gK(\g)$.
\begin{ex}         \label{ex:w_K-so}
Here we provide formulae for $w_\gK$ if $\g$ is an orthogonal Lie algebra.
\\
1$^o$. If $\g$ is of type $\GR{D}{2n}$, then

$w_\gK$: \  $\left( \text{\begin{tabular}{ccccc|ccccc}
$\esi_1$ & $\esi_3$ & $\dots$ & $\esi_{2n-3}$ & $\esi_{2n-1}$ & $\esi_2$ & $\esi_4$ & $\dots$ &
$\esi_{2n-2}$ & $\esi_{2n}$ \\
$\downarrow$ & $\downarrow$ & $\dots$ & $\downarrow$ & $\downarrow$ & 
$\downarrow$ & $\downarrow$ & $\dots$ & $\downarrow$ & $\downarrow$ \\
$-\esi_n$ & $-\esi_{n-1}$ & $\dots$ & $-\esi_{2}$ & $-\esi_{1}$ & $\esi_{n+1}$ & $\esi_{n+2}$ & $\dots$ 
& $\esi_{2n-1}$ & $(-1)^n\esi_{2n}$ \\
\end{tabular}}\right)$
\\[.8ex]
The last sign is determined by the condition that the total number of minuses must be even for 
type $\GR{D}{N}$.
\\[.6ex]
2$^o$. For $\g$ of type $\GR{B}{2n-1}$, one should merely omit the last column in the previous array.
\\[.6ex]
3$^o$. If $\g$ is of type $\GR{D}{2n+1}$, then the following adjustment works:

$w_\gK$: \  $\left( \text{\begin{tabular}{ccccc|cccccc}
$\esi_1$ & $\esi_3$ & $\dots$ & $\esi_{2n-3}$ & $\esi_{2n-1}$ & $\esi_2$ & $\esi_4$ & $\dots$ &
$\esi_{2n-2}$ & $\esi_{2n}$ & $\esi_{2n+1}$ \\
$\downarrow$ & $\downarrow$ & $\dots$ & $\downarrow$ & $\downarrow$ & 
$\downarrow$ & $\downarrow$ & $\dots$ & $\downarrow$ & $\downarrow$ & $\downarrow$ \\
$-\esi_n$ & $-\esi_{n-1}$ & $\dots$ & $-\esi_{2}$ & $-\esi_{1}$ & $\esi_{n+1}$ & $\esi_{n+2}$ & $\dots$ 
& $\esi_{2n-1}$ & $\esi_{2n}$ & $(-1)^n\esi_{2n+1}$\\
\end{tabular}}\right)$
\\[1ex]
4$^o$. For $\g$ of type $\GR{B}{2n}$, one should merely omit the last column in the previous array.

\textbullet \ \ 
It follows that in all four cases $w_\gK(\theta)=w_\gK(\esi_1+\esi_2)=-\esi_n+\esi_{n+1}=-\ap_n$, which 
agrees with Lemma~\ref{lm:good-prop-w_K}{\sf (ii)}.

\textbullet \ \ For $\GR{D}{2n+1}$, one has $\Pi_\gK(0)=\{\ap_{2n},\ap_{2n+1}\}$ (see Fig.~\ref{fig:class}) 
and $w_\gK$ takes $\Pi_\gK(0)$ to itself. Recall that here $\ap_{2n}=\esi_{2n}-\esi_{2n+1}$ and 
$\ap_{2n+1}=\esi_{2n}+\esi_{2n+1}$. The same happens for $\GR{B}{2n}$, where
$\Pi_\gK(0)=\{\ap_{2n}=\esi_{2n}\}$, cf. Lemma~\ref{lm:good-prop-w_K}{\sf (i)}.
\end{ex}

\begin{ex}         \label{ex:w_K-sl-sp}
Here we provide formulae for $w_\gK$ if $\g=\sltn$ or $\spn$.
\\
1$^o$. If $\g$ is of type $\GR{A}{2n-1}$, then
\\[.6ex]
\centerline{$w_\gK$: \  $\left( \text{\begin{tabular}{ccccc|ccccc}
$\esi_1$ & $\esi_2$ & $\dots$ & $\esi_{n-1}$ & $\esi_{n}$ & $\esi_{n+1}$ & $\esi_{n+2}$ & $\dots$ &
$\esi_{2n-1}$ & $\esi_{2n}$ \\
$\downarrow$ & $\downarrow$ & $\dots$ & $\downarrow$ & $\downarrow$ & 
$\downarrow$ & $\downarrow$ & $\dots$ & $\downarrow$ & $\downarrow$ \\
$\esi_{n+1}$ & $\esi_{n+2}$ & $\dots$ & $\esi_{2n-1}$ & $\esi_{2n}$ & $\esi_{1}$ & $\esi_2$ & $\dots$ 
& $\esi_{n-1}$ & $\esi_{n}$ \\
\end{tabular}}\right)$}

\noindent
It follows that $w_\gK(\theta)=w_\gK(\esi_1-\esi_{2n})=-\esi_n+\esi_{n+1}=-\ap_n$, which agrees with 
Lemma~\ref{lm:good-prop-w_K}{\sf (ii)}. Here $\Pi_\gK(0)=\Pi\setminus\{\ap_n\}$ (see Fig.~\ref{fig:class}) and 
$w_\gK(\ap_i)=\begin{cases}   \ap_{i+n},  & i<n \\ \ap_{i-n}, & i>n \end{cases}$ \ .
\\[1ex]
2$^o$. If $\g$ is of type $\GR{C}{n}$, then
\begin{center}
$w_\gK$: \  $\left( \text{\begin{tabular}{ccccc}
$\esi_1$ & $\esi_2$ & $\dots$ & $\esi_{n-1}$ & $\esi_{n}$  \\
$\downarrow$ & $\downarrow$ & $\dots$ & $\downarrow$ & $\downarrow$  \\
$-\esi_n$ & $-\esi_{n-1}$ & $\dots$ & $-\esi_{2}$ & $-\esi_{1}$ \\
\end{tabular}}\right)$
\end{center}
%\\[.8ex]
Here $w_\gK(\theta)=-\ap_n$, \ $\Pi_\gK(0)=\Pi\setminus\{\ap_n\}$, and $w_\gK(\ap_i)=\ap_{n-i}$ for $i<n$. %\\[1ex]

\textbullet \ \  In both cases here, one has $w_\gK^2=1$.
\end{ex}
\begin{ex}    \label{ex:w_K-e6&f4}
1$^o$. For $\g$ of type $\GR{F}{4}$, we write $(a_1a_2a_3a_4)$ for $\sum_{i=1}^4 a_i\ap_i$. Then
$w_\gK(\ap_i)=\ap_i$ for $i=1,2$ and $w_\gK(\ap_3)=-(2421)$, $w_\gK(\ap_4)=(2431)=\theta-\ap_4$.
It follows that $w_\gK(\theta)=-\ap_4$.

2$^o$. For $\g$ of type $\GR{E}{6}$, we write $(a_1a_2a_3a_4a_5a_6)$ for
$\gamma=\sum_{i=1}^6 a_i\ap_i=$
\raisebox{-2.1ex}{\begin{tikzpicture}[scale= .85, transform shape]
\node (b) at (0,0) {$a_1$}; \node (c) at (.4,0) {$a_2$};
\node (d) at (.8,0) {$a_3$}; \node (e) at (1.2,0) {$a_4$}; \node (f) at (1.6,0) {$a_5$};
\node (g) at (.8,-.4) {$a_6$};
\end{tikzpicture}}. 
Then
$w_\gK(\ap_i)=\ap_i$ for $i=1,2,4,5$ and $w_\gK(\ap_3)=-(122211)$, $w_\gK(\ap_6)=(123211)=\theta-\ap_6$. It follows that $w_\gK(\theta)=-\ap_6$.

3$^o$. For $\g$ of type $\GR{G}{2}$, we have $\Pi_l=\{\ap_2\}$, $w_\gK=(r_{\ap_2}r_{\ap_1})^2$, and
$w_\gK(\theta)=-\ap_2$.

\textbullet \ \  In all three cases, one has $w_\gK^3=1$.
\end{ex}
\begin{ex}    \label{ex:w_K-e7}
1$^o$. For $\GR{E}{7}$, we have $\Pi_\gK(-1)=\{\ap_2,\ap_4,\ap_6\}$ (see Fig.~\ref{fig:except}) and 

{\tabcolsep=0.15em
\begin{tabular}{c|c c c c c c c|}    %%% E7 %%%
$\ap_i$ & $\ap_1$ & $\ap_2$ & $\ap_3$ & $\ap_4$ & $\ap_5$ & $\ap_6$ & $\ap_7$ \\ \hline
$w_\gK(\ap_i)$ &  \rule{0pt}{3ex} 
\raisebox{-2.1ex}{\begin{tikzpicture}[scale= .85, transform shape]
\node (a) at (-.1,0) {--1}; \node (b) at (.2,0) {2}; \node (c) at (.4,0) {2};
\node (d) at (.6,0) {2}; \node (e) at (.8,0) {1}; \node (f) at (1,0) {0};
\node (g) at (.6,-.4) {1};
\end{tikzpicture}} &
\raisebox{-2.1ex}{\begin{tikzpicture}[scale= .85, transform shape]
\node (a) at (0,0) {1}; \node (b) at (.2,0) {2}; \node (c) at (.4,0) {2};
\node (d) at (.6,0) {2}; \node (e) at (.8,0) {1}; \node (f) at (1,0) {1};
\node  at (.6,-.4) {1};
\end{tikzpicture}} & %$-$\!
\raisebox{-2.1ex}{\begin{tikzpicture}[scale= .85, transform shape]
\node (a) at (-.1,0) {--1}; \node (b) at (.2,0) {1}; \node (c) at (.4,0) {2};
\node (d) at (.6,0) {2}; \node (e) at (.8,0) {1}; \node (f) at (1,0) {1};
\node (g) at (.6,-.4) {1};
\end{tikzpicture}} &
\raisebox{-2.1ex}{\begin{tikzpicture}[scale= .85, transform shape]
\node (a) at (0,0) {1}; \node (b) at (.2,0) {1}; \node (c) at (.4,0) {2};
\node (d) at (.6,0) {2}; \node (e) at (.8,0) {2}; \node (f) at (1,0) {1};
\node  at (.6,-.4) {1};
\end{tikzpicture}} &
\raisebox{-2.1ex}{\begin{tikzpicture}[scale= .85, transform shape]
\node (a) at (-.1,0) {--0}; \node (b) at (.2,0) {1}; \node (c) at (.4,0) {2};
\node (d) at (.6,0) {2}; \node (e) at (.8,0) {2}; \node (f) at (1,0) {1};
\node (g) at (.6,-.4) {1};
\end{tikzpicture}} &
\raisebox{-2.1ex}{\begin{tikzpicture}[scale= .85, transform shape]
\node (a) at (0,0) {0}; \node (b) at (.2,0) {1}; \node (c) at (.4,0) {2};
\node (d) at (.6,0) {3}; \node (e) at (.8,0) {2}; \node (f) at (1,0) {1};
\node  at (.6,-.4) {1};
\end{tikzpicture}} &
\raisebox{-2.1ex}{\begin{tikzpicture}[scale= .85, transform shape]
\node (a) at (-.1,0) {--1}; \node (b) at (.2,0) {1}; \node (c) at (.4,0) {1};
\node (d) at (.6,0) {2}; \node (e) at (.8,0) {2}; \node (f) at (1,0) {1};
\node (g) at (.6,-.4) {1};
\end{tikzpicture}}   \\
\end{tabular} }

\noindent
It follows that $w_\gK(\theta)=-\ap_7$. A direct calculation shows that $\text{ord}(w_\gK)=18$.

\vspace{1ex}
2$^o$. For $\GR{E}{8}$, we have $\Pi_\gK(-1)=\{\ap_1,\ap_3,\ap_5,\ap_7\}$ (see Fig.~\ref{fig:except}) 
and 

{\tabcolsep=0.15em
\begin{tabular}{c|c c c c c c c c|}     %%% E8 %%%
$\ap_i$ & $\ap_1$ & $\ap_2$ & $\ap_3$ & $\ap_4$ & $\ap_5$ & $\ap_6$ & $\ap_7$ & $\ap_8$ \\ \hline
$w_\gK(\ap_i)$ &  \rule{0pt}{3ex} 
\raisebox{-2.1ex}{\begin{tikzpicture}[scale= .85, transform shape]
\node (a) at (0,0) {0}; \node (b) at (.2,0) {1}; \node (c) at (.4,0) {2};
\node (d) at (.6,0) {3}; \node (e) at (.8,0) {4}; \node (f) at (1,0) {3}; \node (g) at (1.2,0) {1};
\node (h) at (.8,-.4) {2};
\end{tikzpicture}}  & %$-$\!\!
\raisebox{-2.1ex}{\begin{tikzpicture}[scale= .85, transform shape]
\node (a) at (-.1,0) {--0}; \node (b) at (.2,0) {1}; \node (c) at (.4,0) {2};
\node (d) at (.6,0) {3}; \node (e) at (.8,0) {4}; \node (f) at (1,0) {2}; \node (g) at (1.2,0) {1};
\node (h) at (.8,-.4) {2};
\end{tikzpicture}}  & 
\raisebox{-2.1ex}{\begin{tikzpicture}[scale= .85, transform shape]
\node (a) at (0,0) {1}; \node (b) at (.2,0) {1}; \node (c) at (.4,0) {2};
\node (d) at (.6,0) {3}; \node (e) at (.8,0) {4}; \node (f) at (1,0) {2}; \node (g) at (1.2,0) {1};
\node (h) at (.8,-.4) {2};
\end{tikzpicture}}  & %$-$\!\!
\raisebox{-2.1ex}{\begin{tikzpicture}[scale= .85, transform shape]
\node (a) at (-.1,0) {--1}; \node (b) at (.2,0) {1}; \node (c) at (.4,0) {2};
\node (d) at (.6,0) {3}; \node (e) at (.8,0) {3}; \node (f) at (1,0) {2}; \node (g) at (1.2,0) {1};
\node (h) at (.8,-.4) {2};
\end{tikzpicture}}  & 
\raisebox{-2.1ex}{\begin{tikzpicture}[scale= .85, transform shape]
\node (a) at (0,0) {1}; \node (b) at (.2,0) {2}; \node (c) at (.4,0) {2};
\node (d) at (.6,0) {3}; \node (e) at (.8,0) {3}; \node (f) at (1,0) {2}; \node (g) at (1.2,0) {1};
\node (h) at (.8,-.4) {2};
\end{tikzpicture}}  & %$-$\!\!
\raisebox{-2.1ex}{\begin{tikzpicture}[scale= .85, transform shape]
\node (a) at (-.1,0) {--1}; \node (b) at (.2,0) {2}; \node (c) at (.4,0) {2};
\node (d) at (.6,0) {3}; \node (e) at (.8,0) {3}; \node (f) at (1,0) {2}; \node (g) at (1.2,0) {1};
\node (h) at (.8,-.4) {1};
\end{tikzpicture}}  & 
\raisebox{-2.1ex}{\begin{tikzpicture}[scale= .85, transform shape]
\node (a) at (0,0) {1}; \node (b) at (.2,0) {2}; \node (c) at (.4,0) {3};
\node (d) at (.6,0) {3}; \node (e) at (.8,0) {3}; \node (f) at (1,0) {2}; \node (g) at (1.2,0) {1};
\node (h) at (.8,-.4) {1};
\end{tikzpicture}}  & %$-$\!\!
\raisebox{-2.1ex}{\begin{tikzpicture}[scale= .85, transform shape]
\node (a) at (-.1,0) {--1}; \node (b) at (.2,0) {2}; \node (c) at (.4,0) {2};
\node (d) at (.6,0) {2}; \node (e) at (.8,0) {3}; \node (f) at (1,0) {2}; \node (g) at (1.2,0) {1};
\node (h) at (.8,-.4) {2};
\end{tikzpicture}} 
\\
\end{tabular} }

\noindent
It follows that $w_\gK(\theta)=-\ap_7$. A direct calculation shows that $\text{ord}(w_\gK)=5$.
\end{ex}

\begin{rmk}
(1) We do not know a general formula for $\text{ord}(w_\gK(\soN))$. % if $\g=\soN$. 
Using 
Example~\ref{ex:w_K-so}, it is not hard to prove that $\text{ord}(w_\gK)$ takes the same value for
$\GR{B}{2n-1},\GR{D}{2n},\GR{B}{2n},\GR{D}{2n+1}$. But explicit computations, up to $n=13$, show that
the function $n\mapsto \text{ord}(w_\gK(\GR{D}{2n}))$ behaves rather chaotically. 
%irregular. 

(2) %One may notice that 
If $\ap\in\Pi_\gK(0)$, then $w_\gK(\ap)\in\Pi_\gK(0)$ as well. But this does not hold for arbitrary $z\in\De_{\sf ab}$. In general, it can happen that $\ap\in\Pi_z(0)$, but
$w_z(\ap)\in\Pi\setminus\Pi_z(0)$.
\end{rmk}
The main result of this section is an explicit uniform description of $\Delta(\ah_\gK)$. 

\begin{thm}       \label{thm:ab-ideal}
Set \ $d_\gK=\htt(\theta)+1-\sum_{\ap\in\Pi_\gK(-1)} [\theta:\ap]=
1+\sum_{\ap\in\Pi_\gK({\ge}0)} [\theta:\ap]$. Then
\begin{itemize}
\item[\sf (i)] \ $\htt(\gamma)=d_\gK$ if and only if\/ $w_\gK^{-1}(\gamma)\in\Pi_\gK(-1)$;
\item[\sf (ii)] \ $\htt(\gamma)=d_\gK-1$ if and only if \  $-w_\gK^{-1}(\gamma)\in\Pi_\gK(1)$;
\item[\sf (iii)] \ $\displaystyle \Delta(\ah_\gK)=\{\gamma\in\Delta^+\mid\htt(\gamma)\ge d_\gK\}$.
\end{itemize}
\end{thm}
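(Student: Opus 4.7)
The plan is to establish parts \textsf{(i)} and \textsf{(ii)} by direct case-by-case computation using the explicit descriptions of $w_\gK$ gathered in Examples~\ref{ex:how-find-w_K}--\ref{ex:w_K-e7}, and then to deduce \textsf{(iii)} combinatorially from Theorem~\ref{thm:min_K}.

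I would first dispense with the equivalence of the two formulae for $d_\gK$: since $\Pi = \Pi_\gK(-1)\sqcup\Pi_\gK({\ge}0)$ and $\htt(\theta)=\sum_{\ap\in\Pi}[\theta:\ap]$, this is tautological. For \textsf{(i)}, proceeding one Cartan type at a time, I would read $\Pi_\gK(-1)$ off Figures~\ref{fig:class}--\ref{fig:except}, compute $d_\gK$ from its definition, and apply the explicit $w_\gK$ to each $\ap\in\Pi_\gK(-1)$, verifying that $\htt(w_\gK(\ap))=d_\gK$. Injectivity of $w_\gK$ together with the matching count $\#\Pi_\gK(-1)=\#\{\gamma\in\Delta^+\mid \htt(\gamma)=d_\gK\}$ upgrades this one-sided check to the biconditional. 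Part \textsf{(ii)} is analogous: the key verifications are $\htt(-w_\gK(\ap))=d_\gK-1$ for every $\ap\in\Pi_\gK(1)$, together with $\#\Pi_\gK(1)=\#\{\gamma\mid \htt(\gamma)=d_\gK-1\}$, with Theorem~\ref{thm:max_K} providing the underlying structural identification.

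For \textsf{(iii)} I would then argue as follows. By Theorem~\ref{thm:min_K}, $\min(\Delta(\ah_\gK))=w_\gK(\Pi_\gK(-1))$, which by \textsf{(i)} coincides with the whole level $\{\gamma\in\Delta^+\mid \htt(\gamma)=d_\gK\}$. Since $\Delta(\ah_\gK)$ is an upper ideal of $(\Delta^+,\curle)$ and since in the root poset of a simple Lie algebra every positive root of height greater than $d_\gK$ lies above some root of height exactly $d_\gK$ (one removes simple roots successively), this forces $\Delta(\ah_\gK)=\{\gamma\in\Delta^+\mid \htt(\gamma)\ge d_\gK\}$. Conversely, every root of height $\ge d_\gK$ is then contained in $\Delta(\ah_\gK)$ by the upper-ideal property, yielding the desired equality.

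The main obstacle is the lack of a uniform conceptual argument: the height computation has to be carried out separately for each Cartan type. The classical series are short work, since the signed-permutation descriptions in Examples~\ref{ex:w_K-so}--\ref{ex:w_K-sl-sp} reduce the height check to a routine inspection of the $\esi$-coordinates, and the counts of roots of a given height are well known. The exceptional types $\GR{E}{7}$ and $\GR{E}{8}$ are the most labor-intensive but are essentially already tabulated in Example~\ref{ex:w_K-e7}; one simply sums the entries displayed there for each $\ap\in\Pi_\gK(\pm 1)$ and compares with $d_\gK$. No new ingredient beyond the formulae already assembled is required.
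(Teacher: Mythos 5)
Your proposal is correct and follows essentially the same route as the paper: a type-by-type verification that $\htt(w_\gK(\ap))=d_\gK$ for $\ap\in\Pi_\gK(-1)$ combined with the count $\#\Pi_\gK(-1)=\#\{\gamma\mid\htt(\gamma)=d_\gK\}$ to get \textsf{(i)}, the analogous check (or Theorems~\ref{thm:min_K} and \ref{thm:max_K}) for \textsf{(ii)}, and then \textsf{(iii)} from Theorem~\ref{thm:min_K} together with the gradedness of $(\Delta^+,\curle)$. The only cosmetic difference is that the paper treats $\GR{A}{2n-1}$ and $\GR{C}{n}$ separately, where $\Pi_\gK(-1)=\varnothing$, $d_\gK=\mathsf{h}$, and the ideal is trivial, but your framework covers that case vacuously.
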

\begin{proof}
{\bf 1}$^o$. For $\GR{A}{2n-1}$ and $\GR{C}{n}$, $x_\gK$ is dominant. Hence 
$\Pi_\gK(-1)=\varnothing$, $d_\gK=\htt(\theta)+1=\mathsf{h}$ is the Coxeter number, and 
$\Delta(\ah_\gK)=\varnothing$, which agrees with {\sf (iii)}.  Here $\theta$ is the only root of height 
$d_\gK-1$, $\Pi_\gK(1)=\{\ap_n\}$, and $w_\gK(\ap_n)=-\theta$, see Example~\ref{ex:w_K-sl-sp}. This 
confirms {\sf (ii)}, whereas {\sf (i)} is vacuous.

{\bf 2}$^o$.  In the other cases, $\Pi_\gK(-1)\ne\varnothing$.

{\sf (i)} \ Using formulae from Examples~\ref{ex:w_K-so}--\ref{ex:w_K-e7}, one directly verifies that
 if $\ap\in\Pi_\gK(-1)$, then $\htt( w_\gK(\ap))=d_\gK$. It also happens that
 $\#\{\gamma\mid \htt(\gamma)=d_\gK\}=\#\Pi_\gK(-1)$ in all cases.

{\sf (ii)} \ Again, this can be verified directly. Alternatively, this follows from {\sf (i)}, Theorem~\ref{thm:min_K}, and Theorem~\ref{thm:max_K} (without further verifications).

{\sf (iii)} \ This follows from {\sf (i)} and Theorem~\ref{thm:min_K}.
\end{proof}

Note that \
%\begin{gather*}
$\displaystyle \mathsf h-1=\htt(\theta)= \sum_{\ap\in\Pi_\gK(1)}[\theta:\ap]+\sum_{\ap\in\Pi_\gK(0)}[\theta:\ap]+
\sum_{\ap\in\Pi_\gK(-1)}[\theta:\ap]$ 
\[
\text{ and } \quad 1=\theta(x_\gK)= \sum_{\ap\in\Pi_\gK(1)}[\theta:\ap]-\sum_{\ap\in\Pi_\gK(-1)}[\theta:\ap] \ .
\] %end{gather*}
Therefore, if $\Pi_\gK(0)=\varnothing$, then $d_\gK=(\mathsf h/2)+1$; whereas,
if $\Pi_\gK(0)\ne\varnothing$, then $d_\gK> (\mathsf h/2)+1$. 

\begin{rmk}
A remarkable property of the abelian ideal $\ah_\gK$ is that %it is a {\it level ideal}, i.e., 
$\min\Delta(\ah_\gK)$ consists of {\bf all} roots of a {\bf fixed} height. This can be explained by the properties that
$\Pi_\gK(-1)\cup \Pi_\gK(1)$ is connected in the Dynkin diagram and that $+1$ and $-1$ nodes alternate.
For, if $\ap_i$ and $\ap_j$ are adjacent nodes, $\ap_i\in\Pi_\gK(-1)$, and $\ap_j\in\Pi_\gK(1)$, then 
$\ap_i+\ap_j$ is a simple root in $\Delta^+_\gK(0)$. Hence $w_\gK(\ap_i)\in \min\Delta(\ah_\gK)$,
$-w_\gK(\ap_j)\in \max(\Delta^+\setminus \Delta(\ah_\gK))$, and
$w_\gK(\ap_i+\ap_j)\in\Pi$. Hence
$\htt(w_\gK(\ap_i))=1+\htt(-w_\gK(\ap_j))$. In view of Theorems~\ref{thm:min_K} and \ref{thm:max_K},
together with the connectedness and the alternating property for $\Pi_\gK(-1)\cup \Pi_\gK(1)$, this 
relation propagates to any pair in $\min\Delta(\ah_\gK)\times \max(\Delta^+\setminus \Delta(\ah_\gK))$.
%of a minimal element of $\Delta(\ah_\gK)$ and a maximal element of $\Delta^+\setminus \Delta(\ah_\gK)$.
\\ \indent
But the exact value of the boundary height, which is $d_\gK$ is our case, has no explanation.
\end{rmk}

%%%%%%%%%%%%  Section 7  %%%%%%%%%
\section{The involution of $\g$ associated with the cascade}
\label{sect7:involution}

In this section, we assume that $\spx\in\BZ$, i.e., $\g\ne \mathfrak{sl}_{2n+1}$. Then 
partition~\eqref{eq:main-partition} yields the partition of the whole root system
$\Delta=\bigcup_{i=-2}^2 \Delta_{\gK}(i)$ such that $\Delta_{\gK}(2)=\Delta^+_\gK(2)$. 

Set  $\Delta_0=\Delta_{\gK}(-2)\cup \Delta_{\gK}(0)\cup \Delta_{\gK}(2)$ and 
$\Delta_1=\Delta_{\gK}(-1)\cup \Delta_{\gK}(1)$. Consider the vector space decomposition
$\g=\g_0\oplus\g_1$, where $\g_0=\te\oplus (\oplus_{\gamma\in\Delta_0}\g_\gamma)$ and 
$\g_1=\oplus_{\gamma\in\Delta_1}\g_\gamma$. This is a $\BZ_2$-grading and the corresponding
involution of $\g$, denoted $\sigma_\gK$, is inner.

\begin{lm}   \label{lm:involution}
The involution $\sigma_\gK$ has the property that
 \\ \centerline{
$\dim\g_0=\dim\be-\#\gK$ \ and \ $\dim\g_1=\dim\ut+\#\gK$.
} 
\end{lm}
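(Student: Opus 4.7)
The plan is to reduce both dimension formulas to counting positive roots in the level sets of $x_\gK$, then invoke the symmetry of $\spec(x_\gK)$ about $1/2$ established in Lemma~\ref{lm:symmetry-value}.

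First I would observe that by construction $\dim\g_1 = \#\Delta_1 = \#\Delta_\gK(1) + \#\Delta_\gK(-1)$, and that the sign-reversal identity $\Delta^-_\gK(i) = -\Delta^+_\gK(-i)$ gives $\#\Delta_\gK(\pm 1) = \#\Delta^+_\gK(1)+\#\Delta^+_\gK(-1)$. Therefore
\[
   \dim\g_1 = 2\bigl(\#\Delta^+_\gK(-1)+\#\Delta^+_\gK(1)\bigr).
\]

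Next, since $\gK\subset\Delta^+_\gK(1)$ and no element of $\gK$ lies in $\Delta^+_\gK(-1)$, $\Delta^+_\gK(0)$, or $\Delta^+_\gK(2)$, Lemma~\ref{lm:symmetry-value} (symmetry of the spectrum on $\Delta^+\setminus\gK$ about $1/2$, pairing values $-1\leftrightarrow 2$ and $0\leftrightarrow 1$) gives the equalities
\[
    \#\Delta^+_\gK(-1)=\#\Delta^+_\gK(2), \qquad \#\Delta^+_\gK(1)-\#\gK=\#\Delta^+_\gK(0).
\]
Substituting into the expression for $\dim\g_1$ yields
\[
   \dim\g_1 = 2\bigl(\#\Delta^+_\gK(0)+\#\Delta^+_\gK(2)\bigr)+2\#\gK.
\]

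Now I partition $\Delta^+$ according to Theorem~\ref{thm:spektr-fonin} (which allows a $\BZ$-valued partition since $\g\ne\mathfrak{sl}_{2n+1}$):
\[
   \#\Delta^+ = \#\Delta^+_\gK(-1)+\#\Delta^+_\gK(0)+\#\Delta^+_\gK(1)+\#\Delta^+_\gK(2)
              =2\bigl(\#\Delta^+_\gK(0)+\#\Delta^+_\gK(2)\bigr)+\#\gK,
\]
where the second equality uses the same two symmetry identities. Comparing the two displays gives $\dim\g_1 = \#\Delta^+ + \#\gK = \dim\ut+\#\gK$, which is the second assertion. The first then follows from the vector space decomposition $\g=\g_0\oplus\g_1$:
\[
   \dim\g_0 = \dim\g-\dim\g_1 = (2\dim\ut+\rk\g)-(\dim\ut+\#\gK) = \dim\be-\#\gK.
\]

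There is no genuine obstacle here; the argument is pure bookkeeping once Lemma~\ref{lm:symmetry-value} is invoked. The only point to verify carefully is that every $\beta\in\gK$ really sits in $\Delta^+_\gK(1)$ (immediate from $\beta(x_\gK)=1$) and in no other level, so that the two halves of the symmetry pair up cleanly with a shift by $\#\gK$ on the $\{0,1\}$-pair.
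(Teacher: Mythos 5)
Your argument is correct and is essentially the paper's own proof: both rest on the symmetry of $\spec(x_\gK)$ from Lemma~\ref{lm:symmetry-value} to get $\#\Delta^+_\gK(-1)=\#\Delta^+_\gK(2)$ and $\#\Delta^+_\gK(1)=\#\Delta^+_\gK(0)+\#\gK$, and then count roots level by level (the paper just names the two quantities $a$ and $b$ and tabulates the counts). No substantive difference.
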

\begin{proof}
This readily follows from the symmetry of $\spx$ on the Frobenius envelope $\be_\gK$ and the equality 
$\dim\be_\gK=\dim\ut+\#\gK$, see Section~\ref{sect:frob} or Lemma~\ref{lm:symmetry-value}. For,  one 
has
\\[.6ex]
\centerline{
\begin{tabular}{c||c|c|c|c|c}
$i$ & $-2$ & $-1$ & 0 & 1 & 2  \\ \hline
$\#\Delta_{\gK}^+(i)$ & 0 & $a$ & $b{-}\#\gK$ & $b$ & $a$ \\
$\#\Delta_{\gK}^-(i)$ & $a$ & $b$ & $b{-}\#\gK$ & $a$ & $0$ \\
\end{tabular}} 
\\[.8ex]  
for some $a, b$. Then $\dim\ut=2a+2b-\#\gK$,  
$\dim\g_0=2a+2b+\rk\g-2\#\gK$ and $\dim\g_1=2a+2b$.
\end{proof}

%\textbullet \quad 
Since $\ind\ut=\#\gK$ and $\ind\be+\ind\ut=\rk\g$, the formulae of Lemma~\ref{lm:involution} mean 
that $\dim\g_1=\dim\ut+\ind\ut=\dim\be-\ind\be$ and 
$\dim\g_0=\dim\ut+\ind\be=\dim\be-\ind\ut$. 

Recall that $\te_\gK=\bigoplus_{i=1}^m [e_{\beta_i}, e_{-\beta_i}]\subset \te$.

\begin{lm}    \label{lm:t_K-regul-ss}
The subalgebra $\te_\gK$ contains a regular semisimple element of\/ $\g$. 
\end{lm}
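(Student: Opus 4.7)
The plan is to show that regular elements form a Zariski-open dense subset of $\te_\gK$, by verifying that no root vanishes identically on $\te_\gK$. Recall that, under the identification $\te\simeq\te^*$, the subalgebra $\te_\gK=\bigoplus_{i=1}^m[e_{\beta_i},e_{-\beta_i}]$ is precisely the $\BC$-linear span $\langle\beta_1^\vee,\dots,\beta_m^\vee\rangle_\BC=\langle\gK\rangle_\BC$. An element $t\in\te$ is regular semisimple in $\g$ exactly when $\gamma(t)\neq 0$ for every $\gamma\in\Delta$; since $\gamma$ is a nonzero linear functional on $\te$, its restriction $\gamma|_{\te_\gK}$ is either identically zero or nonzero on a dense open subset of $\te_\gK$. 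So it suffices to check that $\gamma|_{\te_\gK}\not\equiv 0$ for every $\gamma\in\Delta$.

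First I would restrict attention to $\gamma\in\Delta^+$, since $(-\gamma)|_{\te_\gK}\equiv 0$ iff $\gamma|_{\te_\gK}\equiv 0$. Under the Killing-form identification, the condition $\gamma|_{\te_\gK}\equiv 0$ is equivalent to $(\gamma,\beta_i)=0$ for all $\beta_i\in\gK$, i.e.\ to $\gamma$ being orthogonal to the entire cascade. Thus the statement reduces to the combinatorial claim: no positive root is orthogonal to all of $\gK$.

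This claim follows directly from the decomposition \eqref{eq:decomp-Delta}, namely $\Delta^+=\bigsqcup_{i=1}^m\eus H_{\beta_i}$. Indeed, by construction $\eus H_{\beta_i}\subset\Delta\langle i\rangle^+$ is the Heisenberg subset associated to the highest root $\beta_i$ of the irreducible root system $\Delta\langle i\rangle$, so every $\gamma\in\eus H_{\beta_i}$ satisfies $(\gamma,\beta_i)>0$. Hence for any $\gamma\in\Delta^+$ there exists (a unique) $i$ with $(\gamma,\beta_i)>0$, and in particular $\gamma$ is not orthogonal to $\gK$.

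Once this is established, the locus of non-regular elements in $\te_\gK$ is a finite union of proper linear subspaces $\{\gamma|_{\te_\gK}=0\}$ (one for each $\gamma\in\Delta$), and the complement is nonempty (in fact Zariski-dense). Any element in the complement lies in $\te$, hence is semisimple in $\g$, and satisfies $\gamma(t)\neq 0$ for all $\gamma\in\Delta$, so is regular. No step here looks like a serious obstacle: the whole argument is a direct application of the cascade decomposition of $\Delta^+$, and the main conceptual point is simply to translate ``regular semisimple'' into the non-vanishing condition with respect to $\gK$.
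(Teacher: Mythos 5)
Your proof is correct and is essentially the same argument as the paper's: both use the decomposition \eqref{eq:decomp-Delta} to see that every positive root pairs strictly positively with some $\beta_i\in\gK$, and then conclude that the finitely many proper subspaces $\{\gamma|_{\te_\gK}=0\}$ cannot cover $\te_\gK$, so a regular semisimple element exists in the complement. The only difference is that you spell out the Zariski-density argument that the paper leaves implicit.
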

\begin{proof}  
By Eq.~\eqref{eq:decomp-Delta}, if $\gamma\in\Delta^+$, then $\gamma\in \gH_{\beta_j}$ 
for a unique $\beta_j\in\gK$. That is, for any $\gamma\in\Delta^+$, there exists $\beta_j\in\gK$ such that
$(\gamma,\beta_j)>0$.  Therefore, there is a $\nu\in\lg \beta_1,\dots,\beta_m\rg_\BQ$ such that 
$(\gamma,\nu)\ne 0$ for {\bf every} $\gamma\in\Delta^+$. Upon the identification of $\te$ and $\te^*$, 
this yields a required element of $\te_\gK$.
\end{proof}
\begin{thm}    \label{thm:sigma-S&N}
The involution $\sigma_\gK$ has the property that $\g_1$ contains a regular semisimple and a regular
nilpotent element of $\g$. 
\end{thm}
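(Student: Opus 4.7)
The theorem asserts two existence statements, which I would prove by independent constructions.

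\emph{Regular semisimple element in $\g_1$.} Since $\beta_i(x_\gK)=1$ for each $\beta_i\in\gK$, both root vectors $e_{\pm\beta_i}$ lie in $\g_1$. Strong orthogonality of $\gK$ makes the $\mathfrak{sl}_2$-triples $\es_i=\lg e_{\beta_i},h_{\beta_i},e_{-\beta_i}\rg$ pairwise commute, so for any scalars $a_1,\dots,a_m\in\BC$ the element
\[
  s(a_\bullet):=\sum_{i=1}^m a_i(e_{\beta_i}-e_{-\beta_i})\in\g_1
\]
is a sum of commuting semisimples, hence semisimple. Within each $\es_i$, the element $e_{\beta_i}-e_{-\beta_i}$ is $SL_2$-conjugate to $\sqrt{-1}\,h_{\beta_i}$ (both are diagonalisable with eigenvalues $\pm\sqrt{-1}$ on the standard $2$-dimensional module). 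Therefore $s(a_\bullet)$ is $G$-conjugate to $\sqrt{-1}\sum_i a_i h_{\beta_i}\in\te_\gK$, and by Lemma~\ref{lm:t_K-regul-ss} one may choose $(a_i)$ so that the latter is regular semisimple in $\g$; for such a choice $s(a_\bullet)$ is a regular semisimple element of $\g$ lying in $\g_1$.

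\emph{Regular nilpotent element in $\g_1$.} The plan is to conjugate the principal nilpotent $e_{\sf pr}=\sum_{\ap\in\Pi}e_\ap$ into $\g_1$ using the Weyl group. It suffices to exhibit $w\in W$ with $w(\Pi)\subset\Delta_{\gK}(1)\cup\Delta_{\gK}(-1)$: then, picking any lift $\tilde w\in N_G(T)$,
\[
  n:=\Ad(\tilde w)(e_{\sf pr})=\sum_{\ap\in\Pi} c_\ap\,e_{w(\ap)}\qquad(c_\ap\ne 0)
\]
has every $w(\ap)\in \Delta_1$, so $n\in\g_1$; and $n$ is $G$-conjugate to $e_{\sf pr}$, hence a regular nilpotent element of $\g$ sitting in $\g_1$. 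The condition on $w$ is equivalent to the element $y:=w^{-1}(x_\gK)$ satisfying $\ap(y)\in\{\pm 1\}$ for every $\ap\in\Pi$, i.e., $y=\sum_{\ap\in\Pi}\varepsilon_\ap\varpi_\ap^\vee$ for some $\varepsilon_\ap\in\{\pm 1\}$.

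I would verify the existence of such $y$ in the Weyl orbit of $x_\gK$ type-by-type. For classical $\g$ the explicit formulas for $x_\gK$ (Examples~\ref{ex:sl-sp} and~\ref{ex:so-x_K}), together with the (signed) permutation description of $W$, allow one to exhibit $y$ as an alternating arrangement of the coordinates of $x_\gK$; for exceptional $\g$ (types $\GR{E}{6},\GR{E}{7},\GR{E}{8},\GR{F}{4},\GR{G}{2}$) the claim is a short finite check using the data of Figure~\ref{fig:except}.

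\emph{Main obstacle.} Part~1 is formal once Lemma~\ref{lm:t_K-regul-ss} is available. The real content is Part~2, namely the existence of the Weyl element $w$ with $w(\Pi)\subset\Delta_1$ (equivalently, that the $W$-orbit of $x_\gK$ meets the finite set $\{\sum_\ap\varepsilon_\ap\varpi_\ap^\vee:\varepsilon_\ap=\pm 1\}$). A uniform, type-free explanation would be preferable, but the case-by-case verification is routine given the explicit data for $x_\gK$ recorded above.
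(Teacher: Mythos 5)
Your argument is correct, and it splits naturally into two halves that relate to the paper's proof in different ways. For the regular semisimple element you are doing exactly what the paper does: the span of the elements $e_{\beta_i}-e_{-\beta_i}$ is a Cartan subalgebra of $\h=\bigoplus_i\tri(\beta_i)$ lying in $\h\cap\g_1$, it is conjugate to $\te_\gK$ by an element of the (pairwise commuting) $SL_2$'s, and Lemma~\ref{lm:t_K-regul-ss} then supplies regularity; your version just makes the conjugating element explicit. For the regular nilpotent element you depart from the paper. The paper closes the argument in one line by invoking the general fact (going back to Antonyan's work on $\BZ_2$-gradings, cf.\ Remark~\ref{rem:sigma-N-reg}) that for an involution the $(-1)$-eigenspace contains a regular semisimple element if and only if it contains a regular nilpotent one, so the first half already implies the second. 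Your route instead constructs the regular nilpotent directly by finding $w\in W$ with $w(\Pi)\subset\Delta_{\gK}(1)\cup\Delta_{\gK}(-1)$. This is a legitimate and even somewhat sharper strategy (it exhibits the element, and a Kostant--Rallis argument shows the condition you impose on $w$ is in fact necessary, not just sufficient), but as written it is a plan rather than a proof: the existence of such $w$ is genuinely deferred. In several types it is immediate because all marks in Figures~\ref{fig:class}--\ref{fig:except} are already $\pm1$ (e.g.\ $\GR{B}{2p-1}$, $\GR{D}{2p}$, $\GR{E}{7}$, $\GR{E}{8}$, $\GR{G}{2}$, where $w=1$ works), but for $\GR{A}{2p-1}$, $\GR{C}{n}$, $\GR{B}{2p}$, $\GR{D}{2p+1}$, $\GR{F}{4}$, $\GR{E}{6}$ you must actually produce the conjugate of $x_\gK$ with all simple-root values $\pm1$ (e.g.\ the alternating arrangement $(\tfrac12,-\tfrac12,\tfrac12,\dots)$ in types $\GR{A}{}$ and $\GR{C}{}$, or $\esi_2+\esi_4\in W{\cdot}(\esi_1+\esi_3)$ in $\GR{F}{4}$). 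These checks do all succeed, so the proposal is sound; but if you want to avoid them, the uniform equivalence between regular semisimple and regular nilpotent elements of $\g_1$ is the shortcut the paper takes.
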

\begin{proof}
For each $\beta_i\in\gK$, consider the subalgebra $\tri(\beta_i)$ with basis
$\{e_{\beta_i}, e_{-\beta_i}, [e_{\beta_i}, e_{-\beta_i}]\}$. Then $\tri(\beta_i)\simeq \tri$ and since the 
elements of $\gK$ are strongly orthogonal, all these  $\tri$-subalgebras pairwise commute. Hence 
$\h=\bigoplus_{j=1}^m \tri(\beta_j)$ is a Lie algebra and $\te_\gK$ is a Cartan subalgebra of $\h$. 
Recall that $\beta_j(x_\gK)=1$, i.e., $\beta_j\in\Delta_{\gK}^+(1)$ for each $j$. By the 
very definition of $\sigma_\gK$, this means that
$\h\cap\g_0=\te_\gK$ and $\h\cap\g_1=(\bigoplus_{i=1}^m \g_{\beta_i})\oplus
(\bigoplus_{i=1}^m \g_{-\beta_i})$. Clearly, there is a Cartan subalgebra of $\h$ that is contained in
$\h\cap\g_1$
%any element of $\te_\gK$ is $H$-conjugate to an element of $\h\cap\g_1$ 
(because this is true for each $\tri(\beta_i)$ separately). Combining this with 
Lemma~\ref{lm:t_K-regul-ss}, we see that $\g_1$ contains a regular semisimple element of $\g$.

Finally, for any involution $\sigma$, its $(-1)$-eigenspace $\g_1$ contains a regular semisimple element if and only if it contains a regular nilpotent element.
\end{proof}

\begin{rmk}   \label{rem:sigma-N-reg}
{\sf (i)} \ One can prove that if $\ce$ is a Cartan subalgebra of $\h=\bigoplus_{j=1}^m \tri(\beta_j)$ that is contained in $\h\cap\g_1$, then
$\ce$ is a maximal diagonalisable subalgebra of $\g_1$. In other words, $\ce\subset\g_1$ is a 
{\it Cartan subspace\/} associated with $\sigma_\gK$. Therefore, the rank of the symmetric variety
$G/G_0$ equals $\dim\ce=\#\gK$.
\\ \indent
{\sf (ii)} \ The involution $\sigma_\gK$ is the unique, up to $G$-conjugacy, {\bf inner} involution such that 
$\g_1$ contains a regular nilpotent element. Moreover, $\sigma_\gK$ has the property that
$\co\cap\g_1\ne\varnothing$ for {\bf any nilpotent} $G$-orbit $\co\subset\g$, see~\cite[Theorem\,3]{an}.
\\ \indent
{\sf (iii)} \ If $\#\gK=\rk\g$, then $\dim\g_1=\dim\be$, $\dim\g_0=\dim\ut$, and $\g_1$ contains 
a Cartan subalgebra of $\g$. In this case, $\sigma_\gK$ is an {\it involution of maximal rank} and 
one has a stronger assertion that $\co\cap\g_1\ne\varnothing$ for {\bf any} $G$-orbit $\co$ in $\g$
(\cite[Theorem\,2]{an}).
\end{rmk}

%%%%%%%%%%%%  Section 8  %%%%%%%%
\section{The nilpotent $G$-orbit associated with the cascade} 
\label{sect:nilp-orb}

\noindent
Consider the nilpotent element associated with $\gK$ 
\beq    \label{eq:e_K}
      e_{\gK}:=\sum_{\beta\in\gK} e_\beta=\sum_{i=1}^m e_{\beta_i}\in\ut^+
\eeq
Since the roots in $\gK$ are linearly independent, the closure of $G{\cdot}e_\gK$ contains the space 
$\bigoplus_{\beta\in\gK}\g_\beta$. Hence the nilpotent orbit $G{\cdot}e_\gK$ does not depend on the 
choice of root vectors $e_\beta\in\g_\beta$. The orbit $\co_\gK:=G{\cdot}e_\gK$ is said to be the 
{\it cascade (nilpotent) orbit}. Our goal is to obtain some properties of this orbit. 

Recall from Section~\ref{sect:comb-prop} that if $\theta$ is fundamental, then $\tap$ is the only (long!) 
simple root such that $(\theta,\tap)>0$ and $\tilde\gK=\{\beta\in\gK\mid (\beta,\tap)<0\}$.
It then follows from~\eqref{eq:tap} that $\#\tilde\gK\le 3$ and
%and $\tilde{\gK}:=\{\beta_i \mid i\in J\}$ is exactly the set of descendants of $\theta$ in $\gK$. 
$\# \tilde\gK=3$ if and only if the roots in $\tilde{\gK}$ are long.
Actually, one has $\#\tilde\gK=1$ for $\GR{G}{2}$, $\#\tilde\gK=2$ for $\GR{B}{3}$, and $\#\tilde\gK=3$ 
for the remaining cases with fundamental $\theta$.

\begin{thm}        \label{prop:ad^4}
1$^o$. Suppose that $\theta$ is fundamental, and let $\tilde\ap$ be the unique simple root such that 
$(\theta,\tilde\ap)\ne 0$. Then  {\sf (i)} \ 
$(\ad e_\gK)^4(e_{-\theta+\tilde\ap})\ne 0$ and {\sf (ii)} \ 
$(\ad e_\gK)^5= 0$.

2$^o$. If $\theta$ is {\bf not} fundamental, then $(\ad e_\gK)^3= 0$.
\end{thm}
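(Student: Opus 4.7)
Both parts reduce to bounding the order of $\ad e_\gK$ via the spectrum of $\ad x_\gK$ on $\g$. The key observation is $[x_\gK,e_\gK]=e_\gK$, which is immediate from $\beta_i(x_\gK)=1$ for each $i$; thus $e_\gK$ has $x_\gK$-weight $+1$, and $(\ad e_\gK)^k$ sends the $\lambda$-eigenspace of $\ad x_\gK$ into the $(\lambda+k)$-eigenspace. By Theorem~\ref{thm:spektr-fonin}, combined with the symmetry $\gamma\mapsto -\gamma$ and the zero contribution from $\te$, the spectrum of $\ad x_\gK$ on all of $\g$ lies in $[-2,2]$ if $\theta$ is fundamental, and in $[-1,1]$ otherwise. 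The weight-shift principle then immediately yields $(\ad e_\gK)^5=0$ in $1^o${\sf (ii)} and $(\ad e_\gK)^3=0$ in $2^o$.

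The real content is $1^o${\sf (i)}. The plan is to evaluate $(\ad e_\gK)^4 e_{-\theta+\tilde\ap}$ and identify it with a nonzero multiple of $e_{\theta-\tilde\ap}$. Strong orthogonality of $\gK$ forces $[e_{\beta_i},e_{\beta_j}]=0$ for $i\ne j$, so the operators $\ad e_{\beta_i}$ pairwise commute and $(\ad e_\gK)^4$ admits a multinomial expansion with commuting factors. A term contributes to the $\g_{\theta-\tilde\ap}$-component of the result only when the exponents $k_\beta$ satisfy $\sum_{\beta\in\gK}k_\beta\beta=2(\theta-\tilde\ap)$; via~\eqref{eq:tap} this target equals $\beta_1+\sum_{i\in J}c_i\beta_i$, with $\sum_{i\in J}c_i=3$, and linear independence of $\gK$ forces the unique admissible multi-index to be $k_{\beta_1}=1$, $k_{\beta_i}=c_i$ for $i\in J$, and zero otherwise. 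Since only a single multinomial term contributes, no cancellation can occur, and the problem reduces to nonvanishing of that one term.

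To secure nonvanishing, I will use the commuting $\mathfrak{sl}_2$-triples $\tri(\beta)$ attached to $\beta\in\{\beta_1\}\cup\tilde\gK$. The Cartan integer $(-\theta+\tilde\ap,\theta^\vee)=-1$ shows that the $\theta$-string through $-\theta+\tilde\ap$ has length two, whence $(\ad e_{\beta_1})e_{-\theta+\tilde\ap}$ is a nonzero multiple of $e_{\tilde\ap}$. For each $i\in J$, formula~\eqref{eq:tap} yields $(\tilde\ap,\beta_i^\vee)=-c_i$, and by strong orthogonality this Cartan integer propagates through the intermediate vectors obtained by applying further $\ad e_{\beta_j}$'s, so the $\beta_i$-string remains long enough at each stage for $(\ad e_{\beta_i})^{c_i}$ to act without vanishing. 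Combining the commuting actions produces a nonzero element of $\g_{\theta-\tilde\ap}$, completing the argument.

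The main obstacle I expect is keeping the discussion of~\eqref{eq:tap} uniform: beyond the generic case $|\tilde\gK|=3$ with all $c_i=1$, one must accommodate $\GR{G}{2}$ (where $|\tilde\gK|=1$ and $c_2=3$) and $\GR{B}{3}$ (where $|\tilde\gK|=2$ and one $c_i=2$). The Cartan-integer identity $(-\theta+\tilde\ap,\beta_i^\vee)=-c_i$ and the commutativity of the $\tri(\beta_i)$'s are tailored precisely to absorb these exceptional multiplicities and avoid splitting into a type-by-type case analysis.
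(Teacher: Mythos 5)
Your proposal is correct and follows essentially the same route as the paper: the vanishing statements come from the bound $\gamma(x_\gK)\le 2$ (resp.\ $\le 1$) of Theorem~\ref{thm:spektr-fonin} via the weight shift under $\ad e_\gK$, and the non-vanishing in $1^o${\sf (i)} is obtained from the same $4$-multiset $\{\theta\}\cup\{\beta_i \text{ with multiplicity } c_i\}_{i\in J}$ dictated by~\eqref{eq:tap}, with strong orthogonality giving commuting factors and no cancellation. Your explicit use of the Cartan integers $(-\theta+\tap,\beta_i^\vee)=-c_i$ to justify the higher-multiplicity steps (for $\GR{B}{3}$ and $\GR{G}{2}$) is a slightly more detailed rendering of the paper's remark that $(-\theta+\tap,\beta)<0$ for all $\beta\in\tilde{\eus M}$, but the argument is the same.
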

\begin{proof}
1$^o${\sf (i)}. \  It follows from Eq.~\eqref{eq:e_K} that
\[
   (\ad e_\gK)^4=\sum_{i_1,i_2,i_3,i_4}\ad\!(e_{\beta_{i_1}}){\cdot}\ldots \cdot \ad\!(e_{\beta_{i_4}}),
\]
where the sum is taken over all possible quadruples of indices from $\{1,\dots,m\}$. Set 
\[
   \ca_{i_1,i_2,i_3,i_4}=\ad\!(e_{\beta_{i_1}})\ad\!(e_{\beta_{i_2}})\ad\!(e_{\beta_{i_3}})\ad\!(e_{\beta_{i_4}}) .
\]
As the roots in $\gK$ are strongly orthogonal, the ordering of factors in $\ca_{i_1,i_2,i_3,i_4}$ is 
irrelevant. Hence the operator $\ca_{i_1,i_2,i_3,i_4}$ depends only on the $4$-multiset 
$\{i_1,i_2,i_3,i_4\}$. Furthermore, the {\bf nonzero} operators corresponding to different $4$-multisets 
are linearly independent. Therefore, to ensure that $(\ad e_\gK)^4\ne 0$, it suffices to point out a 
$4$-multiset $\eus M$ and a root vector $e_\gamma$ such that $\ca_\eus M(e_\gamma)\ne 0$. Of 
course, in place of $4$-multisets of indices in $[1,m]$, one can deal with $4$-multisets in $\gK$.

Using~\eqref{eq:tap} and $\tilde\gK\subset\gK$, one defines a natural $4$-multiset $\tilde{\eus M}$ in 
$\gK$. The first element of $\tilde{\eus M}$ is $\theta=\beta_1$ and then one takes each 
$\beta_i\in\tilde{\gK}$ with multiplicity $k_i=(\tap,\tap)/(\beta_i,\beta_i)$. The resulting $4$-multiset has 
the property that $(-\theta+\tap)+(\theta + \sum_{i\in J}k_i\beta_i)=\theta-\tap$ and 
$(-\theta+\tap,\beta)<0$ for any $\beta$ in $\tilde{\eus M}$. This implies that
\[
       0\ne  \ca_{\tilde{\eus M}}(e_{-\theta+\tap})\in \g_{\theta-\tap} .
\] 
\indent
{\sf (ii)} \ Now we deal with $5$-multisets of $\gK$. Assume that
$\tilde{\eus M}=\{\beta_{i_1},\dots,\beta_{i_5}\}$ and  $\ca_{\tilde{\eus M}}\ne 0$.
Then there are $\gamma,\mu\in \Delta$ such that
$0\ne \ca_{\tilde{\eus M}}(\g_{-\mu})\subset \g_\gamma$. Hence
$\gamma+\mu=\sum_{j=1}^5 \beta_{i_j}$ and $(\gamma+\mu)(x_\gK)=5$. But $\gamma(x_\gK)\le 2$
for any $\gamma\in\Delta$ (Theorem~\ref{thm:spektr-fonin}(3)). This contradiction shows that 
$(\ad e_\gK)^5=0$.

2$^o$. By Theorem~\ref{thm:spektr-fonin}(1)(2), we have $\gamma(x_\gK)\le 1$ for any $\gamma\in\Delta$. Hence the equality
$\gamma+\mu=\sum_{j=1}^3 \beta_{i_j}$ is impossible. This implies that $\ca_{\tilde{\eus M}}= 0$
for any $3$-miltiset $\tilde{\eus M}$ of $\gK$ and thereby $(\ad e_\gK)^3=0$.
\end{proof}

Let $\N\subset\g$ be the set of nilpotent elements. Recall that the {\it height\/} of $e\in\N$, denoted 
$\hot(e)$ or $\hot(G{\cdot}e)$, is the maximal $l\in\BN$ such that $(\ad e)^l\ne 0$ 
(see~\cite[Section\,2]{p99}). By the Jacobson--Morozov theorem, $(\ad e)^2\ne 0$ for any $e\in\N$, i.e., 
$\hot(G{\cdot}e)\ge 2$.
\\ \indent
The {\it complexity\/} of a $G$-variety $X$, $c_G(X)$, is the minimal codimension of the $B$-orbits in 
$X$. If $X$ is irreducible, then $c_G(X)=\trdeg\BC(X)^B$, where $\BC(X)^G$ is the field of 
$B$-invariant rational functions on $X$. The {\it rank\/} of an irreducible $G$-variety $X$ is defined by the 
equality $c_G(X)+r_G(X)=\trdeg\BC(X)^U$~\cite{these}. If $c_G(X)=0$, then $X$ is said to be  {\it spherical}. 
%If $X=G{\cdot}e\subset\g$ is a nilpotent orbit, then $c_G(G{\cdot}e)=0$ if and only if 
%$\hot(e)\le 3$~\cite{p94}.

\begin{prop}    \label{thm:sferic-orbit}
\leavevmode\par
\begin{itemize}
\item[{\sf (i)}]  If $\theta$ is fundamental, then the orbit $G{\cdot}e_\gK$ is {\bf not} spherical and\/ 
$\hot(G{\cdot}e_\gK)=4$. 
\item[{\sf (ii)}]  If $\theta$ is {\bf not} fundamental, then the orbit
$G{\cdot}e_\gK$ is spherical and\/ $\hot(G{\cdot}e_\gK)=2$.
\end{itemize}
\end{prop}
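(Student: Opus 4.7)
Both parts reduce almost immediately to Theorem~\ref{prop:ad^4}, which already computes $(\ad e_\gK)^k$ in the relevant range, combined with the sphericity/height criterion of~\cite{p94}. The plan is first to read off $\hot(G{\cdot}e_\gK)$ from Theorem~\ref{prop:ad^4}, and then to invoke that criterion to pass from the height to a statement about complexity.

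\emph{Computing the height.} By definition, $\hot(G{\cdot}e_\gK)$ is the largest $l$ with $(\ad e_\gK)^l\ne 0$. If $\theta$ is fundamental, Theorem~\ref{prop:ad^4}{\sf 1}$^o$ gives simultaneously $(\ad e_\gK)^4\ne 0$ and $(\ad e_\gK)^5=0$, so $\hot(G{\cdot}e_\gK)=4$. If $\theta$ is not fundamental, Theorem~\ref{prop:ad^4}{\sf 2}$^o$ gives $(\ad e_\gK)^3=0$; since $e_\gK\ne 0$ is nilpotent, the Jacobson--Morozov theorem embeds it in an $\tri$-triple, which forces $(\ad e_\gK)^2\ne 0$. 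Hence $\hot(G{\cdot}e_\gK)=2$ in this case.

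\emph{Passing to sphericity.} By the criterion of~\cite{p94}, a nilpotent $G$-orbit $\co\subset\g$ is spherical if and only if $\hot(\co)\le 3$. In case {\sf (ii)} we have $\hot=2\le 3$, so $G{\cdot}e_\gK$ is spherical. In case {\sf (i)} we have $\hot=4>3$, so $G{\cdot}e_\gK$ is not spherical. Combining the height computation above with this dichotomy yields the proposition.

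\emph{Where the work sits.} The substantive content is not in this proposition but in Theorem~\ref{prop:ad^4}, which itself rests crucially on the spectral bound $\gamma(x_\gK)\le 2$ from Theorem~\ref{thm:spektr-fonin} (the upper eigenvalue $2$ blocks $(\ad e_\gK)^5$ via a $5$-fold sum of $\beta_i\in\gK$, and the stronger upper bound $1$ in the non-fundamental case blocks $(\ad e_\gK)^3$). Consequently, the main ``obstacle'' here is simply to cite~\cite{p94} correctly: once the height/complexity equivalence is granted, the proposition is a two-line deduction.
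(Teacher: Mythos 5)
Your proposal is correct and follows essentially the same route as the paper: the paper's proof likewise consists of citing the criterion of \cite[Theorem\,(0.3)]{p94} (stated there as ``$\co$ is spherical iff $(\ad e)^4=0$'', equivalent to your $\hot(\co)\le 3$) and reading everything else off Theorem~\ref{prop:ad^4}, with the lower bound $\hot\ge 2$ via Jacobson--Morozov already noted in the surrounding text. Nothing to add.
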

\begin{proof}
By~\cite[Theorem\,(0.3)]{p94}, a nilpotent orbit $G{\cdot}e$ is spherical if and only if $(\ad e)^4=0$. 
Hence both assertions follow from Theorem~\ref{prop:ad^4}. 
%{\sf (ii)} If $\theta$ is not fundamental, then $\g$ is of type $\GR{A}{n}$ or $\GR{C}{n}$. Here the formulae for $\gK$ (cf. Example~\ref{ex:sl-sp}) and hence the explicit matrix forms of $e_\gK\in\slno$ and $e_\gK\in\spn$  show that $(e_\gK)^2=0$. Hence $(\ad e_\gK)^3 =0$ and $\co_\gK$ is spherical, cf. also~\cite[Theorem\,(4.2)]{p94}.
\end{proof}

%%%%%%%%
\subsection{A description of the cascade orbits} 
\label{subs:cascade-orbits}
For the classical Lie algebras, we determine the partition corresponding to $\co_{\gK}$. While for the 
exceptional Lie algebras, we point out a "minimal including regular subalgebra" in the sense 
of Dynkin~\cite{dy}.
%point out the Dynkin--Bala--Carter label of $\co_{\gK}$. and point out the corresponding {\it weighted Dynkin diagram\/} $\gD(\co_\gK)$. Our methods are different for the classical and .

{\bf I.} In the classical cases, we use formulae for the height of
nilpotent elements of $\slv$ or $\sov$ or $\spv$ in terms of the corresponding partitions of $\dim\BV$, 
see~\cite[Theorem\,2.3]{p99}.

\textbullet\quad $\g=\mathfrak{so}_{2N+1}$. Since $\hot(\co_\gK)=4$, the parts of $\blb(e_\gK)$ does 
not exceed $3$, i.e., $\blb(e_\gK)=(3^a, 2^b,1^c)$ with $a>0$ and $3a+2b+c=2N+1$. Then
$\blb(e_\gK^2)=(2^a,1,\dots,1)$. Hence
$\rk(e_\gK)=2a+b$ and $\rk(e_\gK^2)=a$. On the other hand, here $\#\gK=N=\rk\g$ and
using the formulae for roots in $\gK$ and thereby the explicit matrix form for $e_\gK$, one readily computes that
\\
\centerline{
$\rk(e_\gK)=\begin{cases}   N, & \text{ if $N$ is even} \\
N{+}1, & \text{ if $N$ is odd}   \end{cases}$ \quad and \ 
$\rk(e_\gK^2)=\frac{1}{2}\rk(e_\gK)=  [(N{+}1)/2]$. 
}
Therefore, if $N$ is either $2j{-}1$ or $2j$, then $a=j$ and $b=0$. 

Hence $\blb(e_\gK)=(3^j, 1^{j-1})$ or $(3^{j}, 1^{j+1})$, respectively.

\textbullet\quad $\g=\mathfrak{so}_{2N}$. Here  $\hot(\co_\gK)=4$ and $\blb(e_\gK)=(3^a, 2^b,1^c)$, 
with $a>0$ and $3a+2b+c=2N$. Then again $\rk(e_\gK)=2a+b$ and $\rk(e_\gK^2)=a$. The explicit form
of $\gK$ and $e_\gK$ shows that
\\
\centerline{
$\rk(e_\gK)=\begin{cases}   N, & \text{ if $N$ is even} \\
N{-}1, & \text{ if $N$ is odd}   \end{cases}$ \quad and \ 
$\rk(e_\gK^2)=\frac{1}{2}\rk(e_\gK)=  [N/2]$. }
\\
Therefore, $a=[N/2]$ and $b=0$ in both cases. Hence
\\[.6ex]
\centerline{
$\blb(e_\gK)=(3^j, 1^j)$, if $N=2j$; \ $\blb(e_\gK)=(3^j, 1^{j+2})$, if $N=2j+1$.}

\textbullet\quad $\g=\mathfrak{sl}_{N+1}$ or $\mathfrak{sp}_{2N}$. Then $\hot(\co_\gK)=2$, 
$\blb(e_\gK)=(2^a,1^b)$, and $a=\rk(e_\gK)=\#\gK$. Therefore, $\blb(e_\gK)=(2^n)$ for $\sltn$ and 
$\spn$, while $\blb(e_\gK)=(2^n,1)$ for $\mathfrak{sl}_{2n+1}$.

{\bf II.} In the exceptional cases, one can use some old, but extremely helpful computations 
of {E.B.\,Dynkin.} Following Dynkin, we say that a subalgebra $\h$ of $\g$ is {\it regular}, if it is 
normalised by a Cartan subalgebra.  As in Section~\ref{sect7:involution}, consider 
$\h:=\bigoplus_{i=1}^m \tri(\beta_i)$. Then $\h$ is normalised by $\te$
%Since the elements of $\gK$ are strongly orthogonal, the corresponding root $\tri$-subalgebras
%$\tri(\beta_i)$ ($i=1,\dots,m$) pairwise commute.  Here $\tri(\beta_i)$ has a basis 
%$(e_{\beta_i}, e_{-\beta_i}, [e_{\beta_i},e_{-\beta_i}])$. Then $\h:=\prod_{i=1}^m \tri(\beta_i)$ is a regular semisimple subalgebra of $\g$ %of type $m\GR{A}{1}$, $m=\#\gK$, 
and $e_\gK\in \h$ is a regular nilpotent element of $\h$. Clearly, $\h$ is a minimal regular semisimple 
subalgebra of $\g$ meeting $\co_\gK$.

For every nilpotent $G$-orbit $\co$ in an exceptional Lie algebra $\g$, Dynkin computes all, up to 
conjugacy, minimal regular semisimple subalgebras of $\g$ meeting $\co$, see Tables~16--20 in \cite{dy}. 
(This information is also reproduced, with a few corrections, in Tables 2--6 in~\cite{age75}.) Therefore, it 
remains only to pick the nilpotent orbit with a "minimal including regular subalgebra" of the required type, 
which also yields the corresponding {\it weighted Dynkin diagram\/} $\gD(\co_\gK)$.

%{\it\bfseries Warning.} 
For the regular subalgebras $\tri\subset\g$, Dynkin uses the Cartan label $\GR{A}{1}$
(resp. $\GRt{A}{1}$) if the corresponding root is long (resp. short). Therefore, we are looking for the 
"minimal including regular subalgebra" of type $m\GR{A}{1}$, $m=\#\gK$, if $\gK\subset \Delta_l$; whereas for $\g$ of type 
$\GR{G}{2}$ we need the  subalgebra of type $\GR{A}{1}+\GRt{A}{1}$.

\subsection{Another approach to $\co_\gK$}     
\label{subs:charact-e_k}
By the very definition of $x_\gK\in\te$ and $e_\gK$, we have $[x_\gK,e_\gK]=e_\gK$. It is also clear that 
$x_\gK\in \Ima(\ad e_\gK)$. Therefore $h_\gK=2x_\gK$ is a {\it characteristic\/} of 
$e_\gK$~\cite[Chap.\,6, \S\,2.1]{t41}. Hence the weighted Dynkin diagram $\gD(\co_\gK)$ is determined 
by the dominant representative in the Weyl group orbit $W{\cdot}h_\gK\subset \te$. Since the 
antidominant representative in $W{\cdot}x_\gK$ is $w_\gK(x_\gK)$ (Prop.~\ref{prop:anti-dom}) and
$\omega_0(x_\gK)=-x_\gK$, the dominant representative is $-w_\gK(x_\gK)$. Therefore, if
$\g\ne\mathfrak{sl}_{2n+1}$, then $W{\cdot}h_\gK\cap\gC=\{2\varpi_j^\vee\}$, where $j$ is 
determined by the condition that $w_\gK(\theta)=-\ap_j$ (cf. Prop.~\ref{prop:fundam}). Thus, if
$\g\ne\mathfrak{sl}_{2n+1}$, then $\co_\gK$ is even and $\gD(\co_\gK)$ has the unique nonzero label 
``2'' that corresponds to $\ap_j$.  

Let $\g=\bigoplus_{i\in\BZ}\g(i)$ be the $\BZ$-grading determined by $h_\gK=2x_\gK$; that is,
$h_\gK$ has the eigenvalue $i$ on $\g(i)$. Then 
\[
  \hot(e_\gK)=\max\{i\mid \g(i)\ne 0\}=2\max\{\gamma(x_\gK)\mid\gamma\in \Delta\} .
\]
Using results of Section~\ref{sect:comb-prop}, we again see that $\hot(e_\gK)\le 4$ and the equality 
occurs if and only if $\theta$ is fundamental.  Note that %if $\g\ne\mathfrak{sl}_{2p+1}$, then
\begin{gather*}
   \dim\g(2)=\#\bigl(\Delta^+_\gK(1)\cup \Delta^-_\gK(1)\bigr)=\#\bigl(\Delta^+_\gK(1)\cup \Delta^+_\gK(-1)\bigr)=
   \#\bigl(\Delta^+_\gK(1)\cup \Delta^+_\gK(2)\bigr) , \\
   \dim\g(4)=\#\Delta^+_\gK(2)=\#\Delta_{\gK}(2)=\dim\Ima(\ad e_\gK)^4 ,
\end{gather*} 
and also $2\dim\g(4)=\dim\ah_\gK$. 

In Tables~\ref{table:O-class} and \ref{table:O-exc}, we point out $\dim\co_{\gK}$ and 
$\gD(\co_\gK)$. For classical cases, we provide the partition $\blb(e_\gK)$; while for the exceptional 
cases, the Dynkin--Bala--Carter notation for orbits is given, see e.g.~\cite[Ch.\,8]{CM}. We also include 
dimensions of the spaces $\g(2)$ and $\g(4)$. % for the $\BZ$-grading of $\g$ determined by $h_\gK$.
In Table~\ref{table:O-class}, the unique nonzero numerical mark "$2$" 
corresponds to the simple root $\ap_j$ for all even cases. For $\GR{A}{2j}$, two marks 
``$1$'' correspond to the roots $\ap_j$ and $\ap_{j+1}$. We also assume that $j\ge 2$ in the four orthogonal cases. (For, $\GR{B}{1}=\GR{A}{1}$, $\GR{D}{2}=\GR{A}{1}+\GR{A}{1}$, $\GR{B}{2}=\GR{C}{2}$, and 
$\GR{D}{3}=\GR{A}{3}$.)
\begin{table}[ht]
\caption{The cascade orbits for the classical Lie algebras}   \label{table:O-class}
\begin{center}\begin{tabular}{>{$}l<{$}| >{$}l<{$} >{$}l<{$} c >{$}c<{$} >{$}c<{$}| }
\g & \blb(e_\gK) & \dim\co_{\gK} & $\gD(\co_\gK)$ & \dim\g(2) & \dim\g(4) \\ \hline\hline
\GR{A}{2j-1} & (2^j) & 2j^2 & \rule{0pt}{2.5ex} 
\raisebox{-.3ex}{\begin{tikzpicture}[scale= .65, transform shape]
\node (a) at (1,0) {\bf 0};
\node (b) at (2,0) {$\dots$};
\node (c) at (3,0) {\bf 0};
\node (d) at (4,0) {\bf 2};
\node (e) at (5,0) {\bf 0};
\node (f) at (6,0) {$\dots$};
\node (g) at (7,0) {\bf 0};
\foreach \from/\to in {a/b, b/c, c/d, d/e, e/f, f/g}  \draw[-] (\from) -- (\to);
\end{tikzpicture}} & j^2 & 0
%{\small $0-\cdots-0-2-0-\cdots-0$} 
\\
    \GR{A}{2j} & (2^j,1) & 2j^2{+}2j & \rule{0pt}{2.5ex} 
\raisebox{-.3ex}{\begin{tikzpicture}[scale= .65, transform shape]
\node (a) at (1,0) {\bf 0};
\node (b) at (2,0) {$\dots$};
\node (c) at (3,0) {\bf 0};
\node (d) at (4,0) {\bf 1};
\node (e) at (5,0) {\bf 1};
\node (f) at (6,0) {\bf 0};
\node (g) at (7,0) {$\dots$};
\node (h) at (8,0) {\bf 0};
\foreach \from/\to in {a/b, b/c, c/d, d/e, e/f, f/g, g/h}  \draw[-] (\from) -- (\to);
\end{tikzpicture}} & j^2 & 0
%{\small $0-\dots-0-1-1-0-\dots-0$}
\\
    \GR{C}{j} & (2^j) & j^2{+}j & \rule{0pt}{2.5ex} 
\raisebox{-.3ex}{\begin{tikzpicture}[scale= .65, transform shape]
\node (a) at (1,0) {\bf 0};
\node (b) at (2,0) {$\dots$};
\node (c) at (3,0) {\bf 0};
\node (d) at (4.3,0) {\bf 2};
\node (e) at (3.45,0) {\Large $<$};
\draw (3.4, .06) -- +(.6,0);
\draw (3.4, -.06) -- +(.6,0);
\foreach \from/\to in {a/b, b/c}  \draw[-] (\from) -- (\to);
\end{tikzpicture}} & \genfrac{(}{)}{0pt}{}{j+1}{2} & 0
%{\small $0-\dots-0\Leftarrow2$} 
\\ 
    \GR{B}{2j-1} & (3^{j}, 1^{j-1}) & 5j^2{-}3j & \rule{0pt}{2.7ex} 
\raisebox{-.3ex}{\begin{tikzpicture}[scale= .65, transform shape]
\node (a) at (1,0) {\bf 0};
\node (b) at (2,0) {$\dots$};
\node (c) at (3,0) {\bf 0};
\node (d) at (4,0) {\bf 2};
\node (e) at (5,0) {\bf 0};
\node (f) at (6,0) {$\dots$};
\node (g) at (7,0) {\bf 0};
\node (h) at (8.2,0) {\bf 0};
\node (r) at (7.8,0) {\Large $>$};
\foreach \from/\to in {a/b, b/c, c/d, d/e, e/f, f/g}  \draw[-] (\from) -- (\to);
\draw (7.2, .06) -- +(.6,0);
\draw (7.2, -.06) -- +(.6,0);
\end{tikzpicture}} & 2j^2{-}j & \genfrac{(}{)}{0pt}{}{j}{2}  \\
%{\small 0-\dots-0-2-0-\dots-0$\Rightarrow$0}\\
     \GR{D}{2j} & (3^j, 1^{j}) & 5j^2{-}j &  \rule{0pt}{3.5ex} 
\raisebox{-2.5ex}{\begin{tikzpicture}[scale= .65, transform shape]
%\tikzstyle{every node}=[circle]
\node (a) at (1,0) {\bf 0};
\node (b) at (2,0) {$\dots$};
\node (c) at (3,0) {\bf 0};
\node (d) at (4,0) {\bf 2};
\node (e) at (5,0) {\bf 0};
\node (f) at (6,0) {$\dots$};
\node (g) at (7,0) {\bf 0};
\node (h) at (8,.6) {\bf 0};
\node (j) at (8,-.6) {\bf 0};
\foreach \from/\to in {a/b, b/c, c/d, d/e, e/f, f/g, g/h,g/j}  \draw[-] (\from) -- (\to);
\end{tikzpicture}}  & 2j^2& \genfrac{(}{)}{0pt}{}{j}{2} \\
   \GR{B}{2j} & (3^j, 1^{j+1}) & 5j^2{+}j & \rule{0pt}{2.7ex} 
\raisebox{-.3ex}{\begin{tikzpicture}[scale= .65, transform shape]
\node (a) at (1,0) {\bf 0};
\node (b) at (2,0) {$\dots$};
\node (c) at (3,0) {\bf 0};
\node (d) at (4,0) {\bf 2};
\node (e) at (5,0) {\bf 0};
\node (f) at (6,0) {$\dots$};
\node (g) at (7,0) {\bf 0};
\node (h) at (8.2,0) {\bf 0};
\node (r) at (7.8,0) {\Large $>$};
\foreach \from/\to in {a/b, b/c, c/d, d/e, e/f, f/g}  \draw[-] (\from) -- (\to);
\draw (7.2, .06) -- +(.6,0);
\draw (7.2, -.06) -- +(.6,0);
\end{tikzpicture}} & 2j^2{+}j & \genfrac{(}{)}{0pt}{}{j}{2}  \\
%{\small 0-\dots-0-2-0-\dots-0$\Rightarrow$0}\\
      \GR{D}{2j+1} & (3^j, 1^{j+2}) & 5j^2{+}3j & \rule{0pt}{3.5ex} 
\raisebox{-2.5ex}{\begin{tikzpicture}[scale= .65, transform shape]
\node (a) at (1,0) {\bf 0};
\node (b) at (2,0) {$\dots$};
\node (c) at (3,0) {\bf 0};
\node (d) at (4,0) {\bf 2};
\node (e) at (5,0) {\bf 0};
\node (f) at (6,0) {$\dots$};
\node (g) at (7,0) {\bf 0};
\node (h) at (8,.6) {\bf 0};
\node (j) at (8,-.6) {\bf 0};
\foreach \from/\to in {a/b, b/c, c/d, d/e, e/f, f/g, g/h,g/j}  \draw[-] (\from) -- (\to);
\end{tikzpicture}}  & 2j^2{+}2j & \genfrac{(}{)}{0pt}{}{j}{2}  \\ \hline
\end{tabular}
\end{center}
\end{table}

\begin{table}[ht]
\caption{The cascade orbits for the exceptional Lie algebras}   \label{table:O-exc}
\begin{center}\begin{tabular}{>{$}c<{$}|>{$}l<{$} >{$}c<{$} c >{$}c<{$} >{$}c<{$}|}
\g & \co_{\gK} & \dim\co_{\gK} & $\gD(\co_\gK)$ & \dim\g(2) & \dim\g(4) \\ \hline\hline
\GR{E}{6} & \mathsf{A}_2 &  42 & \rule{0pt}{2.5ex} 
\raisebox{-3.2ex}{\begin{tikzpicture}[scale= .65, transform shape]
%\tikzstyle{every node}=[circle, fill=white!55]
\node (a) at (0,0) {\bf 0};
\node (b) at (1,0) {\bf 0};
\node (c) at (2,0) {\bf 0};
\node (d) at (3,0) {\bf 0};
\node (e) at (4,0) {\bf 0};
\node (f) at (2,-.9) {\bf 2};
\foreach \from/\to in {a/b, b/c, c/d, d/e, c/f}  \draw[-] (\from) -- (\to);
\end{tikzpicture}} & 20 & 1 \\
\GR{E}{7} & \mathsf{A}_2+3\mathsf{A}_1 & 84 & \rule{0pt}{2.6ex}
\raisebox{-3.2ex}{\begin{tikzpicture}[scale= .65, transform shape]
%\tikzstyle{every node}=[circle, fill=white!55]
\node (a) at (0,0) {\bf 0};
\node (b) at (1,0) {\bf 0};
\node (c) at (2,0) {\bf 0};
\node (d) at (3,0) {\bf 0};
\node (e) at (4,0) {\bf 0};
\node (f) at (5,0) {\bf 0};
\node (g) at (3,-.9) {\bf 2};
\foreach \from/\to in {a/b, b/c, c/d, d/e, e/f, d/g}  \draw[-] (\from) -- (\to);
\end{tikzpicture}} & 35 & 7 \\
\GR{E}{8} & 2\mathsf{A}_2 &  156 & \rule{0pt}{2.6ex}
\raisebox{-3.2ex}{\begin{tikzpicture}[scale= .65, transform shape]
%\tikzstyle{every node}=[circle,  fill=white!55]
\node (h) at (-1,0) {\bf 0};
\node (a) at (0,0) {\bf 0};
\node (b) at (1,0) {\bf 0};
\node (c) at (2,0) {\bf 0};
\node (d) at (3,0) {\bf 0};
\node (e) at (4,0) {\bf 0};
\node (f) at (5,0) {\bf 2};
\node (g) at (3,-.9) {\bf 0};
\foreach \from/\to in {h/a, a/b, b/c, c/d, d/e, e/f, d/g}  \draw[-] (\from) -- (\to);
\end{tikzpicture}} & 64 & 14 \\
\GR{F}{4} & \mathsf{A}_2 & 30 & \rule{0pt}{2.5ex} 
\raisebox{-.5ex}{\begin{tikzpicture}[scale= .65, transform shape]
\node (a) at (0,0) {\bf 0};
\node (b) at (1.1,0) {\bf 0};
\node (c) at (2.5,0) {\bf 0};
\node (d) at (3.6,0) {\bf 2};
\node (e) at (1.55,0) {\Large $<$};
\foreach \from/\to in {a/b,  c/d}  \draw[-] (\from) -- (\to);
\draw (1.5, .06) -- +(.7,0);
\draw (1.5, -.06) -- +(.7,0);
\end{tikzpicture}}  & 14 & 1 \\
\GR{G}{2} & \mathsf{G}_2(a_1) & 10 & \rule{0pt}{2.5ex} 
\raisebox{-.5ex}{\begin{tikzpicture}[scale= .65, transform shape]
\node (b) at (1.1,0) {\bf 0};
\node (c) at (2.5,0) {\bf 2};
\node (e) at (1.55,0) {\Large $<$};
%\draw[<-,line width=2pt] (b) -- (c);
\draw (1.52, .07) -- +(.62,0);
\draw (1.43, 0) -- +(.71,0);
\draw (1.52, -.07) -- +(.62,0);
\end{tikzpicture} } & 4 & 1  \\ \hline
\end{tabular}
\end{center}
\end{table}

Let us summarise main properties of the cascade orbit in all simple $\g$.
\begin{itemize}
\item \ The cascade orbit $\co_\gK$ is even unless $\g$ is of type $\GR{A}{2j}$; this reflects the fact that
$x_\gK\in\mathcal P^\vee$ unless $\g$ is of type $\GR{A}{2j}$.
\item \ If $\theta$ is fundamental, then $\hot(\co_\gK)=4$ and $\co_\gK$ is {\bf not} spherical.
\item \ If $\theta$ is {\bf not} fundamental, then $\hot(\co_\gK)=2$ and $\co_\gK$ is spherical.
Moreover, it appears that $\co_\gK$ is the {\bf maximal} spherical nilpotent orbit in these cases.
\item \ Using the general formulae for the complexity and rank of nilpotent orbits in terms of $\BZ$-gradings~\cite[Sect.\,(2.3)]{p94}, 
one can prove that $c_G(\co_\gK)=2\dim\g(4)=\dim\ah_\gK$ and $r_G(\co_\gK)=\dim\te_\gK=\#\gK$
for all simple $\g$.
\item \ If $\theta$ is fundamental, then ($\co_\gK$ is even and) the node with mark ``$2$'', regarded as a node in the affine Dynkin diagram, determines the {\it Kac diagram\/} of the involution $\sigma_\gK$.
(See \cite[Chap.\,3, \S\,3.7]{t41} for the definition of the Kac diagram of a finite order inner automorphism of $\g$.)
\item \ If $\theta$ is not fundamental and $\co_\gK$ is even, then the node with mark ``$2$'' and the 
extra node in the affine Dynkin diagram, together determine the Kac diagram of the involution 
$\sigma_\gK$.
\end{itemize}

%%%%%%%%%%%%  Section 10  %%%%%%%%%
\appendix
\section{The elements of $\gK$ and Hasse diagrams}
\label{sect:tables}

\noindent
Here we provide the lists of cascade elements and the Hasse diagrams of cascade posets $\eus K$ for 
all simple Lie algebras, see Fig.~\ref{fig:An}--\ref{fig:En}. To each node $\beta_j$ in the Hasse diagram, 
the Cartan label of the simple Lie algebra $\g\lg j\rg$ is attached. Recall that $\beta_j$ is the highest root
for $\g\lg j\rg$. If $\g\lg j\rg\simeq  \tri$ and $\beta_j$ is {\sl short}, then we use the Cartan label
$\GRt{A}{1}$. (This happens only for $\GR{B}{2k+1}$ and $\GR{G}{2}$.) It is also assumed that 
$\GR{A}{1}=\GR{C}{1}$.
The main features are:
\begin{itemize}
\item 
$\Pi=\{\ap_1,\dots,\ap_{\rk \g}\}$ and the numbering of $\Pi$ follows~\cite[Table\,1]{VO}, 
\item $\beta_1=\theta$ is always the highest root,
\item The numbering of the $\beta_i$'s  in the lists corresponds to that in the figures.
\end{itemize}
We use the standard $\esi$-notation for the roots of classical Lie algebras, see~\cite[Table\,1]{VO}.

\noindent
{\it\bfseries The list of cascade elements for the classical Lie algebras}:
\begin{description}
\item[$\GR{A}{n}, n\ge 2$]  \ $\beta_i=\esi_i-\esi_{n+2-i}=\ap_i+\dots +\ap_{n+1-i}$ \ ($i=1,2,\dots,\left[\frac{n+1}{2}\right]$);
\item[$\GR{C}{n}, n\ge 1$]  \ $\beta_i=2\esi_i=2(\ap_i+\dots+\ap_{n-1})+\ap_n$ \ ($i=1,2,\dots,n-1$) and 
$\beta_n=2\esi_n=\ap_n$;
\item[$\GR{B}{2n}$, $\GR{D}{2n}$, $\GR{D}{2n+1}$ ($n\ge 2$)]  \ 
$\beta_{2i-1}=\esi_{2i-1}+\esi_{2i}$, $\beta_{2i}=\esi_{2i-1}-\esi_{2i}$ \ ($i=1,2,\dots,n$);
\item[$\GR{B}{2n+1}, n\ge 1$] \ here $\beta_1,\dots,\beta_{2n}$ are as above and $\beta_{2n+1}=\esi_{2n+1}$;
\end{description}

\noindent
For all orthogonal series, we have $\beta_{2i}=\ap_{2i-1}$, $i=1,\dots,n$, while formulae for 
$\beta_{2i-1}$ via $\Pi$ slightly differ for different series. E.g. for $\GR{D}{2n}$ one has
$\beta_{2i-1}=\ap_{2i-1}+2(\ap_{2i}+\dots+\ap_{2n-2})+\ap_{2n-1}+\ap_{2n}$ ($i=1,2,\dots,n-1$)
and $\beta_{2n-1}=\ap_{2n}$.

\noindent
{\it\bfseries The list of cascade elements for the exceptional Lie algebras}:
\begin{description}
\item[$\GR{G}{2}$]  \ $\beta_1=(32)=3\ap_1+2\ap_2, \ \beta_2=(10)=\ap_1$;
\item[$\GR{F}{4}$]   \ $\beta_1=(2432)=2\ap_1+4\ap_2+3\ap_3+2\ap_4,\ \beta_2=(2210),\ \beta_3=(0210),\ \beta_4=(0010)=\ap_3$;
     \item[$\GR{E}{6}$] \  
  $\beta_1=$\raisebox{-2.1ex}{\begin{tikzpicture}[scale= .85, transform shape]
\node (a) at (0,0) {1}; \node (b) at (.2,0) {2};
\node (c) at (.4,0) {3}; \node (d) at (.6,0) {2};
\node (e) at (.8,0) {1}; \node (f) at (.4,-.4) {2};
\end{tikzpicture}}\!\!, 
  $\beta_2=$\raisebox{-2.1ex}{\begin{tikzpicture}[scale= .85, transform shape]
\node (a) at (0,0) {1}; \node (b) at (.2,0) {1};
\node (c) at (.4,0) {1}; \node (d) at (.6,0) {1};
\node (e) at (.8,0) {1}; \node (f) at (.4,-.4) {0};
\end{tikzpicture}}, 
  $\beta_3=$\raisebox{-2.1ex}{\begin{tikzpicture}[scale= .85, transform shape]
\node (a) at (0,0) {0}; \node (b) at (.2,0) {1};
\node (c) at (.4,0) {1}; \node (d) at (.6,0) {1};
\node (e) at (.8,0) {0}; \node (f) at (.4,-.4) {0};
\end{tikzpicture}},
  $\beta_4$\,=\raisebox{-2.1ex}{\begin{tikzpicture}[scale= .85, transform shape]
\node (a) at (0,0) {0}; \node (b) at (.2,0) {0};
\node (c) at (.4,0) {1}; \node (d) at (.6,0) {0};
\node (e) at (.8,0) {0}; \node (f) at (.4,-.4) {0};
\end{tikzpicture}}=\,$\ap_3$;
     \item[$\GR{E}{7}$] \         %%%  E7   %%%%%%
  $\beta_1$=\raisebox{-2.1ex}{\begin{tikzpicture}[scale= .85, transform shape]
\node (a) at (0,0) {1}; \node (b) at (.2,0) {2}; \node (c) at (.4,0) {3};
\node (d) at (.6,0) {4}; \node (e) at (.8,0) {3}; \node (f) at (1,0) {2};
\node (g) at (.6,-.4) {2};
\end{tikzpicture}},
  $\beta_2$=\raisebox{-2.1ex}{\begin{tikzpicture}[scale= .85, transform shape]
\node (a) at (0,0) {1}; \node (b) at (.2,0) {2}; \node (c) at (.4,0) {2};
\node (d) at (.6,0) {2}; \node (e) at (.8,0) {1}; \node (f) at (1,0) {0};
\node  at (.6,-.4) {1};
\end{tikzpicture}},
  $\beta_3$=\raisebox{-2.1ex}{\begin{tikzpicture}[scale= .85, transform shape]
\node (a) at (0,0) {1}; \node (b) at (.2,0) {0}; \node (c) at (.4,0) {0};
\node (d) at (.6,0) {0}; \node (e) at (.8,0) {0}; \node (f) at (1,0) {0};
\node (g) at (.6,-.4) {0};
\end{tikzpicture}}=\,$\ap_1$,
  $\beta_4$=\raisebox{-2.1ex}{\begin{tikzpicture}[scale= .85, transform shape]
\node (a) at (0,0) {0}; \node (b) at (.2,0) {0}; \node (c) at (.4,0) {1};
\node (d) at (.6,0) {2}; \node (e) at (.8,0) {1}; \node (f) at (1,0) {0};
\node (g) at (.6,-.4) {1};
\end{tikzpicture}},
  $\beta_5$=\raisebox{-2.1ex}{\begin{tikzpicture}[scale= .85, transform shape]
\node (a) at (0,0) {0}; \node (b) at (.2,0) {0}; \node (c) at (.4,0) {1};
\node (d) at (.6,0) {0}; \node (e) at (.8,0) {0}; \node (f) at (1,0) {0};
\node (g) at (.6,-.4) {0};
\end{tikzpicture}}=\,$\ap_3$,   \\
  $\beta_6$=\raisebox{-2.1ex}{\begin{tikzpicture}[scale= .85, transform shape]
\node (a) at (0,0) {0}; \node (b) at (.2,0) {0}; \node (c) at (.4,0) {0};
\node (d) at (.6,0) {0}; \node (e) at (.8,0) {1}; \node (f) at (1,0) {0};
\node (g) at (.6,-.4) {0};
\end{tikzpicture}}=\,$\ap_5$,
  $\beta_7$=\raisebox{-2.1ex}{\begin{tikzpicture}[scale= .85, transform shape]
\node (a) at (0,0) {0}; \node (b) at (.2,0) {0}; \node (c) at (.4,0) {0};
\node (d) at (.6,0) {0}; \node (e) at (.8,0) {0}; \node (f) at (1,0) {0};
\node (g) at (.6,-.4) {1};
\end{tikzpicture}}=\,$\ap_7$;
      \item[$\GR{E}{8}$] \      %%%  E8   %%%%%%
  $\beta_1$=\raisebox{-2.1ex}{\begin{tikzpicture}[scale= .85, transform shape]
\node (a) at (0,0) {2}; \node (b) at (.2,0) {3}; \node (c) at (.4,0) {4};
\node (d) at (.6,0) {5}; \node (e) at (.8,0) {6}; \node (f) at (1,0) {4}; \node (g) at (1.2,0) {2};
\node (h) at (.8,-.4) {3};
\end{tikzpicture}},
  $\beta_2$=\raisebox{-2.1ex}{\begin{tikzpicture}[scale= .85, transform shape]
%\tikzstyle{every node}=[circle]
\node (h) at (-.2,-.0) {0}; \node (a) at (0,0) {1}; \node (b) at (.2,0) {2}; \node (c) at (.4,0) {3};
\node (d) at (.6,0) {4}; \node (e) at (.8,0) {3}; \node (f) at (1,0) {2};
\node (g) at (.6,-.4) {2};
\end{tikzpicture}},
  $\beta_3$=\raisebox{-2.1ex}{\begin{tikzpicture}[scale= .85, transform shape]
%\tikzstyle{every node}=[circle]
\node (h) at (-.2,-.0) {0}; \node (a) at (0,0) {1}; \node (b) at (.2,0) {2}; \node (c) at (.4,0) {2};
\node (d) at (.6,0) {2}; \node (e) at (.8,0) {1}; \node (f) at (1,0) {0};
\node  at (.6,-.4) {1};
\end{tikzpicture}},
  $\beta_4$=\raisebox{-2.1ex}{\begin{tikzpicture}[scale= .85, transform shape]
\node (h) at (-.2,-.0) {0}; \node (a) at (0,0) {1}; \node (b) at (.2,0) {0}; \node (c) at (.4,0) {0};
\node (d) at (.6,0) {0}; \node (e) at (.8,0) {0}; \node (f) at (1,0) {0};
\node (g) at (.6,-.4) {0};
\end{tikzpicture}}=\,$\ap_2$,
  $\beta_5$=\raisebox{-2.1ex}{\begin{tikzpicture}[scale= .85, transform shape]
\node (h) at (-.2,-.0) {0}; \node (a) at (0,0) {0}; \node (b) at (.2,0) {0}; \node (c) at (.4,0) {1};
\node (d) at (.6,0) {2}; \node (e) at (.8,0) {1}; \node (f) at (1,0) {0};
\node (g) at (.6,-.4) {1};
\end{tikzpicture}},  \\
  $\beta_6$=\raisebox{-2.1ex}{\begin{tikzpicture}[scale= .85, transform shape]
\node (h) at (-.2,-.0) {0}; \node (a) at (0,0) {0}; \node (b) at (.2,0) {0}; \node (c) at (.4,0) {1};
\node (d) at (.6,0) {0}; \node (e) at (.8,0) {0}; \node (f) at (1,0) {0};
\node (g) at (.6,-.4) {0};
\end{tikzpicture}}=\,$\ap_4$, 
  $\beta_7$=\raisebox{-2.1ex}{\begin{tikzpicture}[scale= .85, transform shape]
\node (h) at (-.2,-.0) {0}; \node (a) at (0,0) {0}; \node (b) at (.2,0) {0}; \node (c) at (.4,0) {0};
\node (d) at (.6,0) {0}; \node (e) at (.8,0) {1}; \node (f) at (1,0) {0};
\node (g) at (.6,-.4) {0};
\end{tikzpicture}}=\,$\ap_6$,
  $\beta_8$=\raisebox{-2.1ex}{\begin{tikzpicture}[scale= .85, transform shape]
\node (h) at (-.2,-.0) {0}; \node (a) at (0,0) {0}; \node (b) at (.2,0) {0}; \node (c) at (.4,0) {0};
\node (d) at (.6,0) {0}; \node (e) at (.8,0) {0}; \node (f) at (1,0) {0};
\node (g) at (.6,-.4) {1};
\end{tikzpicture}}=\,$\ap_8$;
\end{description}

%\noindent
%If $\beta\in \eus K$ is a simple root, then $\Phi(\beta)=\{\beta\}$ and $\beta$ is necessarily a minimal element of $\eus K$. Conversely, if $\beta$ is a minimal element of $\eus K$ and $\Phi(\beta)=\{\ap\}$, a sole simple root, then $\ap=\beta$. This happens in all cases except $\GR{A}{2n}$. For instance, $\beta_n=\ap_n$ for $\GR{A}{2n-1}$, $\beta_4=\ap_3$ for $\GR{F}{4}$, $\beta_3=\ap_1$ for $\GR{E}{7}$, and $\beta_4=\ap_2$ for $\GR{E}{8}$.

%%%%%%%%   Hasse diagrams %%%

%%%%  Figura 1  %%%%%
\begin{figure}[htb]    
\caption{The cascade posets for $\GR{A}{p}$ ($p\ge 2$), $\GR{C}{n}$ ($n\ge 1$), 
$\GR{F}{4}$, $\GR{G}{2}$}  
\label{fig:An}
\vskip1.5ex
\begin{center}
\begin{tikzpicture}[scale= .62] %, transform shape]
\node (2) at (4.5,6.3) {$\beta_1$};
\node (3) at (4.5,4.2) {$\dots$};
\node (4) at (4.5,2.1) {$\beta_{n-1}$};
\node (5) at (4.5,0) {$\beta_{n}$};
\foreach \from/\to in {2/3, 3/4, 4/5} \draw [-,line width=.7pt] (\from) -- (\to);

\draw (6.1,6.3) node { {\color{forest}$\{\GR{A}{2n{-}1}\}$} };
\draw (6.2, 2.1) node { {\color{forest}$\{\GR{A}{3}\}$} };
\draw (6.2,0) node { {\color{forest}$\{\GR{A}{1}\}$} };

\draw (3,3.6) node {$\GR{A}{2n{-}1}$:};
\end{tikzpicture}
\  
\begin{tikzpicture}[scale= .62] %, transform shape]
\node (2) at (4.5,6.3) {$\beta_1$};
\node (3) at (4.5,4.2) {$\dots$};
\node (4) at (4.5,2.1) {$\beta_{n-1}$};
\node (5) at (4.5,0) {$\beta_{n}$};
\foreach \from/\to in {2/3, 3/4, 4/5} \draw [-,line width=.7pt] (\from) -- (\to);

\draw (5.9,6.3) node  { {\color{forest}$\{\GR{A}{2n}\}$} };
\draw (6.1, 2.1) node { {\color{forest}$\{\GR{A}{4}\}$} };
\draw (6.1,0) node     { {\color{forest}$\{\GR{A}{2}\}$} };

\draw (3.2,3.6) node {$\GR{A}{2n}$:};
\end{tikzpicture}
\ 
%%%%%%%%%  Cascade C_n  %%%%%%%
\begin{tikzpicture}[scale= .62] %, transform shape]
\node (2) at (4.5,6.3) {$\beta_1$};
\node (3) at (4.5,4.2) {$\dots$};
\node (4) at (4.5,2.1) {$\beta_{p{-}1}$};
\node (5) at (4.5,0) {$\beta_p$};
\foreach \from/\to in {2/3, 3/4, 4/5} \draw [-,line width=.7pt] (\from) -- (\to);

\draw (5.9, 6.3) node {{\color{forest}$\{\GR{C}{n}\}$} };
\draw (6, 2.1) node {{\color{forest}$\{\GR{C}{2}\}$} };
\draw (6, 0) node {{\color{forest}$\{\GR{C}{1}\}$} };

\draw (3.3,3.6) node {$\GR{C}{n}$:};
\end{tikzpicture}
\ 
%%%%%%%%  Cascade F_4 %%%%%%%%
\begin{tikzpicture}[scale= .62] %, transform shape]
\node (2) at (4.5,6.3) {$\beta_1$};
\node (3) at (4.5,4.2) {$\beta_2$};
\node (4) at (4.5,2.1) {$\beta_3$};
\node (5) at (4.5,0) {$\beta_4$};
\foreach \from/\to in {2/3, 3/4, 4/5} \draw [-,line width=.7pt] (\from) -- (\to);

\draw (5.6,6.3) node {{\color{forest}$\{\GR{F}{4}\}$} };
\draw (5.6, 4.2) node {{\color{forest}$\{\GR{C}{3}\}$} };
\draw (5.6, 2.1) node {{\color{forest}$\{\GR{C}{2}\}$} };
\draw (5.6,0) node {{\color{forest}$\{\GR{C}{1}\}$} };

\draw (3.3,3.6) node {$\GR{F}{4}$:};
\end{tikzpicture}
\ 
%%%%%%%%% Cascade G_2   %%%%%%%%
\begin{tikzpicture}[scale= .62]
\node  (9)  at (9, 4.4) { $\beta_{1}$}; 
\node  (10) at (9, 2.3) { $\beta_{2}$}; 
\node  (11) at (9,0) {}; 
\foreach \from/\to in {9/10} \draw [-,line width=.7pt] (\from) -- (\to);

\draw (10.1,4.4) node {{\color{forest}$\{\GR{G}{2}\}$} };
\draw (10.1,2.3) node {{\color{forest}$\{\GRt{A}{1}\}$} };

\draw (7.8,3.8) node {$\GR{G}{2}$:};
\end{tikzpicture}

\end{center}
\end{figure}

%\vspace{.5cm}

%%%%  Figura 2  %%%%%
\begin{figure}[ht]    
\caption{The cascade posets for series  $\GR{B}{p}$, $p\ge 3$}  
\label{fig:Bn}
\vskip1.5ex
\begin{center}
%%%   Cascade B_2n+1   %%%%%%%%
\begin{tikzpicture}[scale= .62] %, transform shape]
\node  (1) at (9,10.5) { $\beta_1$};
\node  (2) at (6,8.4) { $\beta_2$};
\node  (3) at (9,8.4) { $\beta_3$};
\node  (4) at (6,6.3) { $\beta_4$};
\node  (5) at (9,6.3) {$\dots$};
\node  (6) at (6,4.2) {$\dots$};
\node  (7) at (9,4.2) { $\beta_{2n-3}$};
\node  (8) at (6,2.1) { $\beta_{2n-2}$};
\node  (9) at (9,2.1) { $\beta_{2n-1}$}; 
\node  (10) at (6,0) { $\beta_{2n}$}; 
\node  (11) at (9,0) { $\beta_{2n+1}$}; 
\foreach \from/\to in {1/2, 1/3, 3/4, 3/5, 5/7,7/8, 7/9, 9/10, 9/11} \draw [-,line width=.7pt] (\from) -- (\to);
\draw[loosely dotted] (5)--(6);

\draw (10.5,10.5) node {{\color{forest}$\{\GR{B}{2n+1}\}$} };
\draw (4.7,8.4) node {{\color{forest}$\{\GR{A}{1}\}$} };
\draw (10.5,8.4) node {{\color{forest}$\{\GR{B}{2n-1}\}$}};
\draw (4.7, 6.3) node {{\color{forest}$\{\GR{A}{1}\}$} };
\draw (10.9, 4.2) node {{\color{forest}$\{\GR{B}{5}\}$}};
\draw (4.4,2.1) node {{\color{forest}$\{\GR{A}{1}\}$} };
\draw (10.9,2.1) node {{\color{forest}$\{\GR{B}{3}\}$}};
\draw (4.4,0) node {{\color{forest}$\{\GR{A}{1}\}$} };
\draw (10.9,0) node {{\color{forest}$\{\GRt{A}{1}\}$} };

\draw (2.9,4.2) node {$\GR{B}{2n+1}$:};
\end{tikzpicture}
\quad
%%%   Cascade B_{2n}   %%%%%%%%
\begin{tikzpicture}[scale= .62] %, transform shape]
\node (1) at (9,10.5) { $\beta_1$};
\node (2) at (6,8.4) { $\beta_2$};
\node (3) at (9,8.4) { $\beta_3$};
\node (4) at (6,6.3) { $\beta_4$};
\node (5) at (9,6.3) {$\dots$};
\node  (6) at (6,4.2)     {$\dots$};
\node (7) at (9,4.2) { $\beta_{2n-3}$};
\node (8) at (6,2.1) { $\beta_{2n-2}$};
\node (9) at (9,2.1) { $\beta_{2n-1}$}; 
\node (10) at (6,0) { $\beta_{2n}$}; 
\foreach \from/\to in {1/2, 1/3, 3/4, 3/5, 5/7,7/8, 7/9, 9/10} \draw [-,line width=.7pt] (\from) -- (\to);
\draw[loosely dotted] (5)--(6);

\draw (10.5,10.5) node {{\color{forest}$\{\GR{B}{2n}\}$} };
\draw (4.7,8.4) node {{\color{forest}$\{\GR{A}{1}\}$} };
\draw (10.5,8.4) node {{\color{forest}$\{\GR{B}{2n-2}\}$} };
\draw (4.7, 6.3) node {{\color{forest}$\{\GR{A}{1}\}$} };
\draw (10.7, 4.2) node {{\color{forest}$\{\GR{B}{4}\}$} };
\draw (4.4,2.1) node {{\color{forest}$\{\GR{A}{1}\}$} };
\draw (10.7,2.1) node {{\color{forest}$\{\GR{B}{2}\}$} };
\draw (4.4,0) node {{\color{forest}$\{\GR{A}{1}\}$} };

\draw (2.6,4.2) node { $\GR{B}{2n}$:};
\end{tikzpicture}
\end{center}
\end{figure}

%\vspace{.4cm}

%%%%  Figura 3  %%%%%
\begin{figure}[ht]    
\caption{The cascade posets for series $\GR{D}{p}$, $p\ge 4$}   
\label{fig:Dn}
\vskip1.5ex
\begin{center}
%%%   Cascade D_2n   %%%%%%%%
\begin{tikzpicture}[scale= .62]
\node  (1) at (9,10.5) { $\beta_1$};
\node  (2) at (5.5,8.4) { $\beta_2$};
\node  (3) at (9,8.4) { $\beta_3$};
\node  (4) at (5.5,6.3) { $\beta_4$};
\node  (5) at (9,6.3) {$\dots$};
\node  (6) at (5.5,4.2) {$\dots$};
\node  (7) at (9,4.2) { $\beta_{2n-5}$};
\node  (8) at (5.5,2.1) { $\beta_{2n-4}$};
\node  (9) at (9,2.1) { $\beta_{2n-3}$}; 
\node  (10) at (5.5,0) { $\beta_{2n-2}$}; 
\node  (11) at (10,0) { $\beta_{2n-1}$}; 
\node  (12) at (12,0) { $\beta_{2n}$}; 
\foreach \from/\to in {1/2, 1/3, 3/4, 3/5, 5/7,7/8, 7/9, 9/10, 9/11,9/12} \draw [-,line width=.7pt] (\from) -- (\to);
\draw[loosely dotted] (5)--(6);

\draw (10.3,10.5) node {{\color{forest}$\{\GR{D}{2n}\}$} };
\draw (4.2,8.4) node {{\color{forest}$\{\GR{A}{1}\}$} };
\draw (10.5,8.4) node {{\color{forest}$\{\GR{D}{2n-2}\}$} };
\draw (4.2, 6.3) node {{\color{forest}$\{\GR{A}{1}\}$} };
\draw (10.9, 4.2) node {{\color{forest}$\{\GR{D}{6}\}$} };
\draw (4,2.1) node {{\color{forest}$\{\GR{A}{1}\}$} };
\draw (10.9,2.1) node {{\color{forest}$\{\GR{D}{4}\}$} };
\draw (4,0) node {{\color{forest}$\{\GR{A}{1}\}$} };
\draw (8.4,0) node {{\color{forest}$\{\GR{A}{1}\}$} };
\draw (13.3,0) node {{\color{forest}$\{\GR{A}{1}\}$} };

\draw (2.8,4.2) node { $\GR{D}{2n}$:};
\end{tikzpicture}
\quad
%%%   Cascade D_{2n+1}   %%%%%%%%
\begin{tikzpicture}[scale= .62]
\node (1) at (9,10.5) { $\beta_1$};
\node (2) at (6,8.4) { $\beta_2$};
\node (3) at (9,8.4) { $\beta_3$};
\node (4) at (6,6.3) { $\beta_4$};
\node (5) at (9,6.3) {$\dots$};
\node  (6) at (6,4.2)     {$\dots$};
\node (7) at (9,4.2) { $\beta_{2n-3}$};
\node (8) at (6,2.1) { $\beta_{2n-2}$};
\node (9) at (9,2.1) { $\beta_{2n-1}$}; 
\node (10) at (6,0) { $\beta_{2n}$}; 
\foreach \from/\to in {1/2, 1/3, 3/4, 3/5, 5/7,7/8, 7/9, 9/10} \draw [-,line width=.7pt] (\from) -- (\to);
\draw[loosely dotted] (5)--(6);

\draw (10.5,10.5) node {{\color{forest}$\{\GR{D}{2n+1}\}$} };
\draw (4.8,8.4) node {{\color{forest}$\{\GR{A}{1}\}$} };
\draw (10.5,8.4) node {{\color{forest}$\{\GR{D}{2n-1}\}$} };
\draw (4.8, 6.3) node {{\color{forest}$\{\GR{A}{1}\}$} };
\draw (10.7, 4.2) node {{\color{forest}$\{\GR{D}{5}\}$} };
\draw (4.4,2.1) node {{\color{forest}$\{\GR{A}{1}\}$} };
\draw (10.7,2.1) node {{\color{forest}$\{\GR{D}{3}\}$}  };
\draw (4.4,0) node {{\color{forest}$\{\GR{A}{1}\}$} };

\draw (3,4.2) node {$\GR{D}{2n+1}$:};
\end{tikzpicture}
\end{center}
\end{figure}

\vspace{.4cm}

%%%%  Figura 4  %%%%%
\begin{figure}[ht]    
\caption{The cascade posets for $\GR{E}{6}$, $\GR{E}{7}$, $\GR{E}{8}$}  
\label{fig:En}
\begin{center}
%%%  Cascade E6   %%%%%
\begin{tikzpicture}[scale= .63] %, transform shape]
\node (2) at (4.5,6.3) {$\beta_1$};
\node (3) at (4.5,4.2) {$\beta_2$};
\node (4) at (4.5,2.1) {$\beta_3$};
\node (5) at (4.5,0) {$\beta_4$};
\foreach \from/\to in {2/3, 3/4, 4/5} \draw [-,line width=.7pt] (\from) -- (\to);

\draw (5.6,6.3) node {{\color{forest}$\{\GR{E}{6}\}$} };
\draw (5.6, 4.2) node {{\color{forest}$\{\GR{A}{5}\}$} };
\draw (5.6, 2.1) node {{\color{forest}$\{\GR{A}{3}\}$} };
\draw (5.6,0) node {{\color{forest}$\{\GR{A}{1}\}$} };

\draw (3,3.6) node {\large $\GR{E}{6}$:};
\end{tikzpicture}
\qquad
%%%   Cascade E7   %%%%%%%%
\begin{tikzpicture}[scale= .63] %, transform shape]
\node (2) at (10.5,6.3) {$\beta_1$};
\node (3) at (9,4.2) {$\beta_2$};
\node (4) at (10.5,2.1) {$\beta_3$};
\node (5) at (7.5,2.1) {$\beta_4$};
\node (6) at (6,0) {$\beta_5$};
\node (7) at (9,0) {$\beta_6$};
\node (8) at (11.5,0) {$\beta_7$}; 
\foreach \from/\to in {2/3, 3/4, 3/5, 5/6, 5/7, 5/8} \draw [-,line width=.7pt] (\from) -- (\to);

\draw (9.5,6.3) node {{\color{forest}$\{\GR{E}{7}\}$} };
\draw (8, 4.2) node {{\color{forest}$\{\GR{D}{6}\}$} };
\draw (11.5, 2.1) node {{\color{forest}$\{\GR{A}{1}\}$} };
\draw (8.6,2.1) node {{\color{forest}$\{\GR{D}{4}\}$} };
\draw (5,0) node {{\color{forest}$\{\GR{A}{1}\}$} };
\draw (8,0) node {{\color{forest}$\{\GR{A}{1}\}$} };
\draw (10.5,0) node {{\color{forest}$\{\GR{A}{1}\}$} };

\draw (5,3.6) node {\large $\GR{E}{7}$:};
\end{tikzpicture}
\qquad
%%%   Cascade E8   %%%%%%%%
\begin{tikzpicture}[scale= .63] %, transform shape]
\node   (1) at (9,8.4) {$\beta_1$};
\node   (2) at (10.5,6.3) {$\beta_2$};
\node   (3) at (9,4.2) {$\beta_3$};
\node   (4) at (10.5,2.1) {$\beta_4$};
\node   (5) at (7.5,2.1) {$\beta_5$};
\node   (6) at (6,0) {$\beta_6$};
\node   (7) at (9,0) {$\beta_7$};
\node   (8) at (11.5,0) {$\beta_8$}; 
\foreach \from/\to in {1/2, 2/3, 3/4, 3/5, 5/6, 5/7, 5/8} \draw [-,line width=.7pt] (\from) -- (\to);

\draw (10,8.4) node {{\color{forest}$\{\GR{E}{8}\}$} };
\draw (11.5,6.3) node {{\color{forest}$\{\GR{E}{7}\}$} };
\draw (10, 4.2) node {{\color{forest}$\{\GR{D}{6}\}$} };
\draw (11.5, 2.1) node {{\color{forest}$\{\GR{A}{1}\}$} };
\draw (8.6,2.1) node {{\color{forest}$\{\GR{D}{4}\}$} };
\draw (7.1,0) node {{\color{forest}$\{\GR{A}{1}\}$} };
\draw (10.1,0) node {{\color{forest}$\{\GR{A}{1}\}$} };
\draw (12.5,0) node {{\color{forest}$\{\GR{A}{1}\}$} };

\draw (6,3.6) node {\large $\GR{E}{8}$:};
\end{tikzpicture}
\end{center}
\end{figure}

\end{document}